\begin{document}

\newtheorem{theorem}{Theorem}[section]
\newtheorem{lemma}[theorem]{Lemma}
\newtheorem{corollary}[theorem]{Corollary}
\newtheorem{cor}[theorem]{Corollary}
\newtheorem{proposition}[theorem]{Proposition}
\theoremstyle{definition}
\newtheorem{definition}[theorem]{Definition}
\newtheorem{example}[theorem]{Example}
\newtheorem{claim}[theorem]{Claim}
\newtheorem{xca}[theorem]{Exercise}

\theoremstyle{remark}
\newtheorem{remark}[theorem]{Remark}

\newcommand{\pint}{P_{\text{int}}}
\newcommand{\contractible}{contractible}
\newcommand{\girth}{g}
\newcommand{\id}{\mbox{Id}_{[3]}}
\newcommand{\B}{\partial(D)}
\newcommand{\Z}{\mathbb Z}
\newcommand{\N}{\mathbb N}
\newcommand{\F}{\mathbb F}
\newcommand{\Q}{\mathbb Q}
\newcommand{\R}{\mathbb R}
\newcommand{\Ztwo}{\mathbb Z_2}
\newcommand{\T}{T}
\newcommand{\1}{{\bf 1}}
\newcommand{\BB}{\partial(\T)}
\newcommand{\Btilde}{\partial(\tilde D)}
\newcommand{\BBtilde}{\partial(\tilde \BB)}
\newcommand{\Bprime}{\partial(D')}
\newcommand{\len}{L}
\newcommand{\diam}{\text{diam}}
\newcommand{\be}{\begin{equation}}
\newcommand{\ee}{\end{equation}}
\newcommand{\prob}{\mbox{\bf P}}
\newcommand{\E}{\mbox{\bf E}}
\newcommand{\vint}{V_{\text{int}}}
\newcommand{\dint}{D_{\text{int}}}
\newcommand{\fdis}{F_{\text{dis}}}
\newcommand{\vdis}{V_{\text{dis}}}
\newcommand{\fsid}{F_{\text{dis}}}
\newcommand{\vsid}{V_{\text{dis}}}
\newcommand{\basin}{{\text{Basin}}}
\newcommand{\Star}{{\text{Star}}}
\newcommand{\atc}{associated triangulated disk}
\newcommand{\ptc}{triangulated disk}

\newcommand\HFILL{\hspace*{\fill}}
\newcommand\VFILL{\vspace*{\fill}}

\newenvironment{pf} {\noindent{\sc Proof. }}{{\hfill
$\Box$}\par\vskip2\parsep}

\newenvironment{pfofthm}[1]
{\par\vskip2\parsep\noindent{\sc Proof of Theorem\ #1. }}{{\hfill
$\Box$}
\par\vskip2\parsep}

\newenvironment{pfoflem}[1]
{\par\vskip2\parsep\noindent{\sc Proof of Lemma\ #1. }}{{\hfill
$\Box$}
\par\vskip2\parsep}

\newenvironment{skpf} {\noindent{\sc Sketch
Proof. }}{{\hfill $\Box$}\par\vskip2\parsep}
\newenvironment{skpfof}[1] {\par\vskip2\parsep\noindent{\sc Sketch
Proof of\ #1. }}{{\hfill $\Box$}\par\vskip2\parsep}

\newcommand{\blankbox}[2]{%
  \parbox{\columnwidth}{\centering
    \setlength{\fboxsep}{0pt}%
    \fbox{\raisebox{0pt}[#2]{\hspace{#1}}}%
  }%
}

\title{The fundamental group of random $2$-complexes}

\author[E. Babson]{Eric Babson}
\author[C. Hoffman]{Christopher Hoffman}
\author[M. Kahle]{Matthew Kahle}
\address{Department of Mathematics, University of Washington, Seattle, WA 98195}

\date{\today.}

\thanks{This research supported in part by NSA grant \#
H98230-05-1-0053, NSF grant \# DMS-0501102, and NSF-VIGRE grant \# DMS-0354131. }

\maketitle


\begin{abstract}
We study Linial-Meshulam random $2$-complexes $Y(n,p)$, which are 
simplicial complexes with $n$ vertices and $n \choose 2$ edges, where each possible two dimensional face
is included independently with probability $p$.
We find the threshold for simple connectivity
of these complexes to be roughly $p = n^{-1/2}$.  This is in sharp contrast to the
threshold for vanishing of the first homology with finite coefficient group, which was shown
by Linial and Meshulam to be $p = 2 \log n / n$.

We use a variant of Gromov's local-to-global theorem
for linear isoperimetric inequalities
to show that when $p = O( n^{-1/2 -\epsilon}$) the fundamental group is
word hyperbolic.
Along the way we classify the homotopy type of sufficiently sparse $2$-dimensional
simplicial complexes and establish isoperimetric inequalities for such complexes. These intermediate
results do not involve randomness.  

\end{abstract}

\section{Introduction}
\label{Sect1}

In this article we find the threshold for simple connectivity
of the random $2$-dimensional simplicial complexes $Y(n,p)$ introduced
by Linial and Meshulam \cite{Nati} to be roughly $p = n^{-1/2}$.  One motivation
for this is continuing the thread of probabilistic topology initiated
by Linial and Meshulam \cite{Nati}, and even earlier by Erd\H{o}s and R\'enyi \cite{Erd1}. (Other recent work
concerning the topology of random simplicial complexes can be found in \cite{Kahle_neighborhood,Kahle_clique,MW,Pipp}.)

Another motivation for this study is the connection to the random groups studied in geometric group theory \cite{Ollivier}.
In fact we use geometric group theory techniques to show that in the sparse regime
the fundamental group is hyperbolic on the way to showing that it is nontrivial;
in particular we apply Gromov's local-to-global principle for linear
isoperimetric inequalities.

Erd\H{o}s and R\'enyi initiated the now vast subject of random graphs with their edge-independent model $G(n,p)$ \cite{Erd1}.

\begin{definition} The {\it Erd\H{o}s-R\'enyi random graph} $G(n,p)$ is the probability space of all graphs on
vertex set $[n]:=\{1,2,\ldots,n\}$ with each of the ${ n \choose 2}$ possible edges included independently with probability $p$. We say $G(n,p)$ {\bf asymptotically almost surely (a.a.s.)} has property $\mathcal{P}$ if
$\lim_{n\rightarrow \infty}\prob(G(n,p) \in \mathcal{P}) =1$.
\end{definition}

A seminal result 
is that $p=\log{n}/n$ is a sharp threshold for the connectivity
of the random graph. 

\begin{theorem}[Erd\H{o}s and R\'enyi  \cite{Erd1}] 
\label{theorem ER} Let $\omega(n) \rightarrow \infty$ as $n
\rightarrow \infty$.
\begin{enumerate}
\item If $p=(\log{n}-\omega(n))/n$ then $G(n,p)$ is a.a.s. disconnected, and 
\item if $p=(\log{n}+\omega(n))/n$ then $G(n,p)$ is a.a.s. connected.
\end{enumerate}
\end{theorem}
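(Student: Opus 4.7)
The plan is to establish the two parts via complementary applications of the second moment method (for disconnection) and a first-moment union bound (for connectedness), in both cases centered on the number of isolated vertices.

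For part (1), I would let $X$ denote the number of isolated vertices of $G(n,p)$. Linearity of expectation gives $\E[X] = n(1-p)^{n-1}$, and substituting $p = (\log n - \omega(n))/n$ together with $(1-p)^{n-1} = e^{-p(n-1) + O(np^2)}$ yields $\E[X] = e^{\omega(n)(1 + o(1))} \to \infty$. A routine second moment calculation using $\E[X(X-1)] = n(n-1)(1-p)^{2n-3}$ shows $\E[X(X-1)]/\E[X]^2 \to 1$, so $\mathrm{Var}(X)/\E[X]^2 \to 0$, and Chebyshev's inequality forces $\prob(X = 0) \to 0$. Any isolated vertex witnesses disconnection, so part (1) follows.

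For part (2), I would bound the probability of disconnection via a union bound over potential unions of connected components. If $G(n,p)$ is disconnected then there exists $S \subset [n]$ with $1 \le |S| \le n/2$ such that no edge crosses between $S$ and $[n] \setminus S$, giving
\[
\prob(G(n,p) \text{ is disconnected}) \;\le\; \sum_{k=1}^{\lfloor n/2 \rfloor} \binom{n}{k}(1-p)^{k(n-k)}.
\]
For $p = (\log n + \omega(n))/n$, the $k = 1$ term equals $\E[X] = n(1-p)^{n-1} = e^{-\omega(n)(1 + o(1))} \to 0$. For $k \ge 2$, one applies $\binom{n}{k} \le (en/k)^k$ and $(1-p)^{k(n-k)} \le e^{-pk(n-k)}$ to obtain a geometric-type bound whose total contribution also vanishes.

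The main obstacle is the tail estimate for intermediate $k$: one must check that the very large binomial coefficient $\binom{n}{k}$ near $k = n/2$ is comfortably dominated by the exponential penalty $e^{-pk(n-k)}$ of order $e^{-\Theta(n \log n)}$. In practice the tail decays extremely rapidly and the $k = 1$ term is the dominant contribution, which is precisely the coincidence that pins down the threshold at $p = \log n / n$ and makes the two parts of the theorem line up on the same critical scale.
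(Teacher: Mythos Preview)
The paper does not actually prove Theorem~\ref{theorem ER}: it is stated as a classical result of Erd\H{o}s and R\'enyi with a citation to \cite{Erd1}, and no proof is given. So there is nothing in the paper to compare your argument against directly.

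That said, your proposal is the standard textbook proof and is correct. The second-moment argument for part~(1) and the union bound over cuts for part~(2) are exactly the approach one finds in, e.g., Bollob\'as~\cite{Bollo}. The only place the paper comes close to this material is Appendix~1, where a related but distinct statement (Lemma~\ref{TY}) is proved: there the authors bound the probability that \emph{all} of the $\binom{n}{2}$ graphs $\mbox{lk}_Y(a)\cap\mbox{lk}_Y(b)$ are simultaneously connected, which requires working above the threshold by a factor that absorbs the extra $\binom{n}{2}$ in the union bound. Their computation there uses the spanning-tree count $k^{k-2}$ to bound the number of connected components of each size rather than your cut-based bound, but the spirit is the same and either technique would work for Theorem~\ref{theorem ER} itself.
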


Nathan Linial and Roy Meshulam exhibited a $2$-dimensional
homological analogue of Theorem \ref{theorem ER}. They defined
 a model of random 2-dimensional simplicial complexes $Y(n,p)$ to be the
probability space of simplicial complexes on vertex set $[n]$ and
edge set ${[n]}\choose{2}$, with each $2$-face appearing
independently with probability $p$.

\begin{theorem}[Linial-Meshulam \cite{Nati}]
\label{theorem LM} Let $\omega(n) \rightarrow \infty$ as $n
\rightarrow \infty$. If $p=(2\log{n}-\omega(n))/n$ then a.a.s.\ $H_{1}(Y,\Z / 2 \Z) \not= 0$, and if $p =
(2\log{n}+\omega(n))/n$ then a.a.s.\ $H_{1}(Y,\Z / 2 \Z )= 0$.
\end{theorem}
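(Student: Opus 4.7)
The plan is to dualize the problem to $\Z/2\Z$-cohomology and then prove each half by a moment computation centered on ``isolated edges.''

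Since $Y$ contains the full $1$-skeleton $K_n$, we have $H_1(Y,\Z/2\Z)=0$ if and only if every $1$-cocycle is a coboundary: a function $f\colon E(K_n)\to\Z/2\Z$ is a cocycle precisely when $f(ij)+f(jk)+f(ik)=0$ for each $2$-face $\{i,j,k\}\in Y$, and a coboundary precisely when $f(ij)=g(i)+g(j)$ for some $g\colon[n]\to\Z/2\Z$. The combinatorial heart of the argument is the following: call an edge $e=\{u,v\}$ \emph{isolated} in $Y$ if no $2$-face of $Y$ contains $e$. For any isolated edge the indicator cochain $f=\mathbf{1}_{\{e\}}$ is vacuously a cocycle, and since $K_n\setminus\{e\}$ is connected for $n\geq 3$, any $g$ with $\delta g=f$ would have to be constant on $K_n\setminus\{e\}$ while also satisfying $g(u)+g(v)=1$, a contradiction; hence $[f]\neq 0$ in $H^1$.

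For part (1), let $X$ count the isolated edges, so $\E X=\binom{n}{2}(1-p)^{n-2}\sim\tfrac{1}{2}n^{2}e^{-pn}\to\infty$ when $p=(2\log n-\omega)/n$. Because any two distinct edges of $K_n$ share at most one common triangle (exactly one when they share a vertex, none otherwise), the correlations between the events ``$e$ isolated'' are weak, and a direct second-moment calculation gives $\mathrm{Var}(X)=o((\E X)^{2})$; Chebyshev then yields $X\geq 1$ a.a.s., and hence $H_1(Y,\Z/2\Z)\neq 0$ a.a.s.

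For part (2), I would union-bound over nontrivial cohomology classes by picking a minimum-weight representative in each class. Call a cochain $f$ \emph{locally minimal} if no single vertex flip $g\mapsto g+\mathbf{1}_{\{v\}}$ strictly decreases $|S_f|:=|\{e:f(e)=1\}|$; equivalently, $\deg_{S_f}(v)\leq(n-1)/2$ for every $v$. Let $b(S)$ denote the number of triangles $T$ of $K_n$ on which $|S\cap T|$ is odd. None of these ``bad'' triangles may appear as a $2$-face of $Y$, so $\prob(f\text{ is a cocycle in }Y)=(1-p)^{b(S_f)}$. Summing over locally minimal nontrivial $f$, the dominant contribution is from $|S_f|=1$ (single edges), giving $\binom{n}{2}(1-p)^{n-2}\to 0$ precisely when $p=(2\log n+\omega)/n$.

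The main obstacle is controlling the higher-weight tail $k=|S_f|\geq 2$: one needs a linear lower bound $b(S_f)\geq ckn$ for every locally minimal representative, combined with a sufficiently sharp enumeration of locally minimal supports of weight $k$, so that the resulting sum over $k$ is $o(1)$. This combinatorial ``isoperimetric'' step, essentially showing that any nontrivial cocycle of weight $k$ forces on the order of $kn$ forbidden triangles, is the technical core of Linial and Meshulam's proof.
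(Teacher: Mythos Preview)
The paper does not prove this theorem at all: Theorem~\ref{theorem LM} is stated as background and attributed to Linial and Meshulam \cite{Nati}, with no argument supplied in the present paper. So there is no ``paper's own proof'' to compare against.

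That said, your outline is a faithful sketch of the original Linial--Meshulam argument. The reduction to $\Z/2\Z$-cohomology, the isolated-edge construction and second-moment computation for the non-vanishing side, and the union bound over locally minimal cocycle representatives for the vanishing side are exactly the structure of their proof. You are also candid about where the real work lies: you explicitly flag the coboundary-expansion (isoperimetric) bound $b(S_f)\ge ckn$ for locally minimal supports as ``the technical core'' and do not prove it. That is an accurate assessment, but it means your proposal is an outline rather than a proof; the genuinely hard step is precisely the one you leave open. If you intend this as a self-contained proof you would need to supply that cocycle isoperimetric inequality (and the accompanying count of locally minimal supports of each weight), which is the main contribution of \cite{Nati}.
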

Meshulam and Wallach extended this result to
$H_{d-1}(Y,\Z / q\Z)$ for arbitrary primes $q$ and $d$-dimensional complexes \cite{MW}.

Our first result is that when $p$ is sufficiently large, $\pi_1(Y(n,p))$ a.a.s.\ vanishes.

\begin{theorem}\label{theorem 1}
Let $\omega(n) \rightarrow \infty$ as $n \rightarrow \infty$.
If $$p\geq \left( \frac{3\log{n}+\omega(n)}{n} \right) ^{1/2}$$
then a.a.s.\ $\pi_1(Y(n,p))=0$.
\end{theorem}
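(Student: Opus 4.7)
The plan is to show that a.a.s.\ every 3-cycle in the 1-skeleton $K_n$ of $Y$ is null-homotopic in $Y$. Since $\pi_1(K_n)$ is a free group generated (as a group) by 3-cycles, this will imply $\pi_1(Y) = 0$.

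First I would prove a witness lemma: for any fixed triple $(a,b,c)$ of distinct vertices, call $v \in [n]\setminus\{a,b,c\}$ a \emph{witness} if both $\{a,b,v\}$ and $\{a,c,v\}$ are 2-faces of $Y$. The probability that no witness exists is at most $(1-p^2)^{n-3} \le \exp(-p^2(n-3))$; under the hypothesis $p^2 \ge (3\log n + \omega(n))/n$ this is at most $n^{-3} e^{-\omega(n)(1+o(1))}$, and a union bound over the at most $n^3$ ordered triples shows that a.a.s.\ every triple admits a witness.

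Next, the algebraic content of a witness. Given a witness $v$ for $(a,b,c)$, the boundary relators of the 2-faces $\{a,b,v\}$ and $\{a,c,v\}$ give the path-level homotopies $a\to b \sim a\to v\to b$ and $c\to a \sim c\to v\to a$ in $Y$. Substituting into the loop $a\to b\to c\to a$ one obtains $a\to v\to b\to c\to v\to a$, which is the conjugate of the loop $v\to b\to c\to v$ by the path $a\to v$. Hence the 3-cycle $abc$ is trivial in $\pi_1(Y)$ if and only if the 3-cycle $vbc$ is trivial; I shall call this a ``vertex-swap'' reduction.

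The final step is iteration. Starting from any triangle $T_0 = abc$, I would perform a sequence of vertex-swap reductions $T_0 \rightsquigarrow T_1 \rightsquigarrow T_2 \rightsquigarrow \cdots$; if some $T_k$ happens to be itself a 2-face of $Y$, then every preceding $T_i$ is null-homotopic. The witness lemma provides on the order of $p^2 n \approx 3\log n$ candidate successors at each step, so a branching exploration of depth $k$ reaches up to $(\log n)^k$ candidate triangles, each of which is a 2-face of $Y$ independently with probability $p$. Taking $k$ of order $\log n / \log\log n$ makes the probability of reaching a 2-face exceed $1-n^{-C}$ for any constant $C$, which suffices to union-bound over the $\binom{n}{3}$ starting triangles. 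The main obstacle will be making this branching argument rigorous: different branches of the exploration share the same randomness in $Y$ and so are not strictly independent, and one has to reveal the 2-faces of $Y$ in a carefully controlled order (or couple to an idealized independent process) to obtain the required concentration.
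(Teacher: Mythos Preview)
Your first two steps are correct and match the paper's approach: reduce to showing every $3$-cycle is null-homotopic, and observe that a ``witness'' $v$ lets you swap one vertex of the triangle. The divergence is in how you finish.

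Notice what your iteration actually is. If you always swap the vertex in the first slot, then $(a,b,c)\to(v_1,b,c)\to(v_2,b,c)\to\cdots$ requires at each step that $\{v_{i-1},b,v_i\}$ and $\{v_{i-1},c,v_i\}$ are faces, i.e.\ that $\{v_{i-1},v_i\}$ is an edge of the graph $\mbox{lk}_Y(b)\cap\mbox{lk}_Y(c)$. So your sequence $a=v_0,v_1,v_2,\ldots$ is nothing but a walk in this link-intersection graph, and you want it to reach some $d$ for which $\{b,c,d\}\in F_2(Y)$. The paper's key observation is that $\mbox{lk}_Y(b)\cap\mbox{lk}_Y(c)$ is distributed exactly as the Erd\H{o}s--R\'enyi graph $G(n-2,p^2)$. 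Since $p^2\ge(3\log n+\omega(n))/n$, one can show (Appendix~1) that this graph is connected with probability $1-o(n^{-2})$, so after a union bound over the $\binom{n}{2}$ pairs $(b,c)$ it is a.a.s.\ connected for \emph{every} pair. Separately, a.a.s.\ every edge $\{b,c\}$ lies in at least one face $\{b,c,d\}$ (this is the easy bound $(1-p)^{n-2}=o(n^{-2})$). Connectivity then gives a path from $a$ to $d$ in $\mbox{lk}_Y(b)\cap\mbox{lk}_Y(c)$, and the $2k+1$ faces along this path form an explicit embedded disk with boundary $abc$.

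This reformulation dissolves the dependency problem you flagged: rather than a branching exploration whose steps reuse the same randomness, you prove once and for all that the ambient graph is connected and that a target vertex exists, and then read off the path deterministically. Your witness lemma is in fact the ``no isolated vertices'' half of the connectivity argument (and your computation already shows why the constant $3$ is the right one), so you are one standard random-graph lemma away from a complete proof. The branching argument as sketched, by contrast, does not obviously go through: the candidate triangles at depth $k$ are heavily correlated with the edges revealed to reach them, and there is no clean way to decouple ``reaching a triangle'' from ``that triangle being a face''.
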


Our main result and most of the work of this paper is to show that the exponent $1/2$ in Theorem
\ref{theorem 1} is best possible.

\begin{theorem}\label{theorem 2}
For any constant $\epsilon > \frac12$, if $$p =O \left( n^{-\epsilon} \right)$$
then  $\pi_1(Y(n,p))$  is a.a.s.\ hyperbolic and nontrivial.
\end{theorem}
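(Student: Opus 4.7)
The plan is to apply Gromov's local-to-global principle for linear isoperimetric inequalities: if there are constants $K$ and $A$ so that every minimal simplicial disk $D \to Y$ with at most $K$ faces satisfies $\mathrm{Area}(D) \leq A \cdot \mathrm{Length}(\partial D)$, then $\pi_1(Y)$ is word hyperbolic and the same inequality holds globally (with perhaps a different constant). The goal is to verify this local hypothesis a.a.s.\ for $Y = Y(n,p)$.

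To this end I would first reduce to \emph{minimal} simplicial disks, those with no cancellable pair of adjacent faces sharing an edge and no collapsible boundary spike. Such a disk is combinatorially a triangulated disk with $V_{\text{int}}$ internal vertices, $L$ boundary edges, and $F$ faces, constrained by the Euler relation $F = 2 V_{\text{int}} + L - 2$. The simplicial map itself need not be injective, so I would classify the possible ``contracted'' combinatorial types (triangulated disks modulo vertex identifications) up to area $K$; this is the combinatorial content of the sparse-complex classification promised in the abstract.

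The key calculation is a first-moment estimate. For each combinatorial type of triangulated disk with parameters $(V_{\text{int}}, L, F)$, the expected number of simplicial images in $Y(n,p)$ is at most $c \cdot n^{V_{\text{int}} + L} p^F$, where $c$ depends only on the type. Substituting $p \leq n^{-1/2 - \epsilon}$ and using the Euler relation gives
\be
n^{V_{\text{int}} + L} p^F \leq n^{1 + L/2 - \epsilon F}.
\ee
Choosing $A = A(\epsilon)$ large enough that every violating disk (with $F > A L$) has $1 + L/2 - \epsilon F \leq -1$, and summing over the boundedly-many types with $F \leq K$, the expected count of violating disks is $o(1)$. Hence a.a.s.\ every minimal disk of area at most $K$ satisfies $F \leq A L$, and Gromov's principle yields that $\pi_1(Y(n,p))$ is word hyperbolic.

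For nontriviality, consider a $3$-cycle $\gamma = a b c a$ in the complete graph $Y^{(1)} = K_n$. A nullhomotopy of $\gamma$ produces a minimal simplicial disk filling $\gamma$; the same first-moment bound with boundary \emph{fixed} gives
\be
\E[\#\{\text{fillings of } \gamma\}] \leq \sum_{V_{\text{int}} \geq 0} c_{V_{\text{int}}} \, n^{V_{\text{int}}} p^{2 V_{\text{int}} + 1} = O\left( n^{-1/2 - \epsilon} \right),
\ee
where $c_{V_{\text{int}}}$ counts disk types with the given $V_{\text{int}}$ and $L = 3$. Union-bounded over all $\binom{n}{3}$ triangle cycles, the expected number of nullhomotopic $3$-cycles is $O(n^{5/2 - \epsilon}) = o(n^3)$; by Markov's inequality, a.a.s.\ many $3$-cycles are not nullhomotopic, so $\pi_1(Y(n,p))$ is a.a.s.\ nontrivial.

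The main obstacle will be the classification of combinatorial types of minimal simplicial disks when the defining map is not injective, since vertex and edge identifications reduce the $n^{V_{\text{int}} + L}$ term in the first-moment bound and can, in principle, permit small but ``dense'' subcomplexes to persist in $Y(n,p)$. Identifying which contracted configurations can occur, and establishing the local linear isoperimetric bound uniformly over all of them, is the principal technical content of the paper.
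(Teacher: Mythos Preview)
Your overall architecture (first moment to control local structure, then Gromov's local-to-global principle) matches the paper's, but the first-moment step as you have written it does not do what you need. The bound $c\,n^{V_{\text{int}}+L}p^{F}$ counts \emph{embedded} disks; for a general minimal filling $\pi:D\to Y$ the relevant exponent is $n^{f_0(Z)}p^{f_2(Z)}$ where $Z=\pi(D)$ is the image subcomplex, and this tends to $0$ exactly when $f_0(Z)<(\tfrac12+\epsilon)f_2(Z)$. So the first moment tells you only that every subcomplex $Z\subset Y$ with $f_2(Z)\le m$ is \emph{admissible}, i.e.\ satisfies $f_0(W)>(\tfrac12+\epsilon)f_2(W)$ for all $W\subset Z$. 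It does \emph{not} bound $A(\gamma)=f_2(D)$, because a minimal filling can cover a small admissible $Z$ with large multiplicity (think of a long word in $\pi_1$ of a projective plane or a large sphere sitting inside $Y$). Your Euler relation $F=2V_{\text{int}}+L-2$ is a statement about the domain disk, not about the image, and ceases to constrain the first-moment exponent once faces are identified.

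The genuine work, therefore, is to prove a linear isoperimetric inequality \emph{deterministically} for every admissible $2$-complex. This is the content of the paper's Theorem~\ref{maintopology}/Lemma~\ref{topology} (every admissible complex is homotopy equivalent to a wedge of circles, spheres and $\mathbb{R}P^2$'s, hence has hyperbolic $\pi_1$) together with Lemmas~\ref{popped}--\ref{berkeleytwo}, which turn that qualitative statement into a uniform constant $\rho(\epsilon)$. Your closing paragraph gestures at this, but frames it as ``classifying contracted disk types''; the paper instead classifies the homotopy types of the \emph{image} complexes and deduces the isoperimetric bound from hyperbolicity of their fundamental groups. That reframing is the key idea you are missing.

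Your nontriviality argument has the same gap, plus an additional one: the sum $\sum_{V_{\text{int}}\ge 0}c_{V_{\text{int}}}n^{V_{\text{int}}}p^{2V_{\text{int}}+1}$ runs over fillings of unbounded area, and non-embedded fillings are again uncontrolled. The paper instead argues as follows: once the global linear isoperimetric inequality is in hand, any filling of the $3$-cycle $\id$ has area at most $3\rho'$, hence image contained in a subcomplex $Z$ with $f_2(Z)\le 3\rho'$; by sparsity $e_3(Z)>0$, and a short Euler-characteristic computation (Lemma~\ref{hiji2}, again relying on Lemma~\ref{topology}) shows that no such $Z$ can contract $\id$. This gives nontriviality for the \emph{specific} cycle $\id$ rather than merely for ``most'' $3$-cycles.
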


The proof of Theorem \ref{theorem 2} relies on general notions of negative curvature due to Gromov.  As the Linial-Meshulam result is an analogue of the Erd\H{o}s-R\'enyi theorem, our result is analogous to certain thresholds for random groups.  The random group seemingly closest to what we study here is the following triangular model.

\begin{definition}
Let $0 \le d \le 1$. A {\it triangular random group}
on $n$ generators at density $d$ is the group presented by $H=<b_1, \ldots, b_n \mid R>$
where $R = \{ r_1, r_2, \dots, r_t \}$, and each $r_i$ is chosen i.i.d.\ uniformly from the $T=2n(2n-1)^2$  reduced words of length $3$, and $t= \lfloor T^d \rfloor$. 
\end{definition}

\.Zuk characterized the threshold for vanishing of $H$ as $n \to \infty$.

\begin{theorem}\label{theorem 2 triangular}\cite{Zuk} If $d<1/2$ then $H$ is a.a.s.\ nontrivial hyperbolic,
and if $d>1/2$ then $H$ is a.a.s.\ trivial.
\end{theorem}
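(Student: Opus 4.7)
The plan is to apply Gromov's local-to-global principle for linear isoperimetric inequalities: if a simply connected cover of $Y$ satisfies a linear isoperimetric inequality at some scale $L_0$ with a small enough constant, then $Y$ satisfies it at all scales, and consequently $\pi_1(Y)$ is word hyperbolic. So the proof splits into two largely independent tasks: establishing a uniform local linear isoperimetric inequality on $Y(n,p)$ a.a.s., and exhibiting an essential loop to witness nontriviality.

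The first task reduces to a combinatorial classification of the \emph{sparse} subcomplexes that a.a.s.\ appear in $Y(n,p)$ when $p = O(n^{-1/2-\epsilon})$. A first moment calculation, based on the fact that the expected number of subcomplexes on $v$ vertices containing $f$ faces is at most $\binom{n}{v}\binom{\binom{v}{3}}{f} p^f$, shows that for some threshold $v \le n^{\alpha(\epsilon)}$, a.a.s.\ every such subcomplex satisfies a strict density bound of the form $f \le (1-\delta)v$ for a $\delta = \delta(\epsilon) > 0$. The exponent $1/2$ is precisely the breakpoint where this moment bound starts to work, which is why the theorem cannot go beyond $p = n^{-1/2}$. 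This is where the sparse regime hypothesis is used essentially.

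Given sparseness, I would invoke the structural results announced in the abstract: sparse 2-complexes (those with strictly fewer 2-faces than vertices on every subcomplex) have a controlled homotopy type—they decompose up to homotopy equivalence into pieces built from arcs, single disks, and bounded configurations of 2-cells sharing edges—and in particular they admit a linear filling function with a constant that can be tracked explicitly. This provides the local linear isoperimetric inequality needed to feed into Gromov's theorem at scale $L_0 = n^{\alpha(\epsilon)/c}$ for an appropriate $c$, yielding global hyperbolicity of $\pi_1(Y(n,p))$. Nontriviality then follows either by exhibiting a short cycle in the 1-skeleton whose filling would require more 2-faces than the local sparseness permits, or, once hyperbolicity is in hand, by using the resulting global linear isoperimetric inequality to rule out contractibility of a concrete short loop.

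The main obstacle I anticipate is not the union-bound calculation itself, but calibrating the local linear isoperimetric constant precisely enough to satisfy the quantitative hypothesis of Gromov's local-to-global theorem—the local Dehn constant must be strictly below a universal numerical threshold, so it is not enough to know the Dehn function is $O(\ell)$; one needs a sharp constant, which forces a careful case analysis of the possible sparse configurations and their worst-case filling ratios. A secondary difficulty is keeping the thresholds for sparseness, for the local inequality to hold, and for Gromov's theorem to apply mutually compatible, since each imposes its own scale constraint and they must all be met simultaneously by a single $L_0$ depending only on $\epsilon$.
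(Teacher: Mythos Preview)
There is a basic mismatch here. The statement you were asked to prove is \.Zuk's theorem about the \emph{triangular random group} $H=\langle b_1,\dots,b_n\mid R\rangle$, cited in the paper as an external result from \cite{Zuk}; the paper does not supply a proof of it at all. Your proposal, on the other hand, is a sketch for the paper's own Theorem~\ref{theorem 2} about $\pi_1(Y(n,p))$: you speak throughout of $Y(n,p)$, of subcomplexes on $v$ vertices with $f$ faces, of the exponent $1/2$ in $p=n^{-1/2-\epsilon}$, and of the sparse-complex structure theorem from the abstract. None of this applies to the triangular model, where the objects are reduced words of length $3$ in $2n$ letters and the density parameter $d$ controls the number $t\approx (8n^3)^d$ of relators, not a face probability on a simplicial complex. \.Zuk's proof of the $d<1/2$ part uses spectral methods (property~(T) via link graphs for the upper range and small-cancellation/isoperimetric arguments in the lower range) that are quite different from the simplicial first-moment bound you describe.

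Even read as a sketch for Theorem~\ref{theorem 2}, your proposal has the right architecture---first-moment sparseness, structural classification of sparse subcomplexes, local linear isoperimetric inequality, Gromov local-to-global, then a short essential loop---which is indeed the paper's strategy. But the quantitative sparseness bound you state is off: you claim a.a.s.\ $f\le (1-\delta)v$, whereas the correct bound in this regime (and the one the paper uses) is $f_2<(2-\delta)f_0$, i.e.\ $e(Z)>\tfrac12+\epsilon$. The factor of two matters: it is exactly what makes the homotopy classification (wedges of circles, spheres, and projective planes) go through, and the constant $1$ you wrote would correspond to a much sparser regime than $p=n^{-1/2-\epsilon}$.
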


\begin{theorem}\label{maintopology}
If $X$ is a finite connected two dimensional simplicial
complex such that
$$ 2f_0(W) > f_2(W)$$ for all nonempty subcomplexes $W \subset X$
then $X$ has the homotopy
type of a wedge of circles, spheres and real projective planes.
Thus the fundamental group of $X$ is a free product of $\Z$'s and $\Z/ 2\Z$'s.
\end{theorem}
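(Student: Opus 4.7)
The plan is to induct on the number of 2-faces $f_2(X)$. The base case $f_2(X)=0$ is immediate since a 1-dimensional simplicial complex is homotopy equivalent to a wedge of circles (working one connected component at a time). For the inductive step with $f_2(X)\geq 1$, I first attempt a free-edge collapse: if some edge $e$ is contained in a unique 2-face $T$, then $(T,e)$ is a free pair and $X$ simplicially collapses to $X':=X\setminus\{T,e\}$, yielding $X\simeq X'$. The hypothesis is preserved on $X'$ because any subcomplex $W'\subseteq X'$ is also a subcomplex of $X$, and in particular $2f_0(X')-f_2(X') = 2f_0(X)-f_2(X) + 1 > 0$. The inductive hypothesis applied to $X'$ finishes this case.

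If no free edge exists---every edge that lies in a 2-face lies in at least two 2-faces---I plan to locate a closed 2-pseudomanifold subcomplex $S\subseteq X$. Starting from any 2-face of $X$ and iteratively adjoining 2-faces of $X$ along any ``under-covered'' edges, then pruning to a minimal closed configuration, produces such an $S$. The hypothesis applied to $S$ gives $2f_0(S)>f_2(S)$; combined with the pseudomanifold identity $3f_2(S)=2f_1(S)$ this is equivalent to $\chi(S)>0$. Since the only closed surfaces of positive Euler characteristic are $S^2$ and $\R P^2$ (the torus, the Klein bottle, and every higher-genus orientable or non-orientable surface has $\chi\leq 0$), we conclude $S$ is one of these two. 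One then shows $X\simeq S\vee X''$ for a smaller complex $X''$ still satisfying the hypothesis, and applies the inductive hypothesis. The conclusion on $\pi_1$ follows from Van Kampen's theorem, since each $S^1$ summand contributes a copy of $\Z$, each $\R P^2$ contributes a copy of $\Z/2\Z$, and $S^2$ summands are simply connected.

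The main obstacle is the no-free-edge case, and in particular the step of showing that the closed surface $S$ is a genuine wedge summand rather than merely a subcomplex of $X$. In general $S$ will share several vertices with $X\setminus S$, so the required homotopy equivalence $X\simeq S\vee X''$ cannot be read off from the combinatorics. The plan is to collapse $X\setminus S$ relative to $S$ until the intersection deformation-retracts to a contractible subcomplex, at which point a standard wedge decomposition up to homotopy becomes available. This is where the assumption on \emph{all} subcomplexes (and not merely on $X$) is essential, since the intermediate subcomplexes occurring in this relative collapse must themselves satisfy $2f_0>f_2$ for induction to apply. A further complication is edges lying in three or more 2-faces (non-manifold branching), which may require reducing the branching structure before a wedge splitting becomes accessible.
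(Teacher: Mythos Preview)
Your strategy---induct on $f_2$, collapse free pairs when available, and in the no-free-edge case locate a closed piece of positive Euler characteristic to split off---is natural and shares its broad outline with the paper's argument. However, the two places you flag as obstacles are genuine gaps, and the bulk of the paper's proof is devoted precisely to getting around them.

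A first issue is that the object $S$ you extract in the no-free-edge case need not be a surface. A strongly connected pure $2$-subcomplex whose every edge lies in exactly two of its faces can still have vertices whose link is not a single circle, and the classification of closed surfaces does not apply to such pseudomanifolds; the inference ``$\chi(S)>0$, hence $S\cong S^2$ or $\R P^2$'' is therefore not yet justified. The hypothesis on all subcomplexes does ultimately exclude pinched tori and the like, but you must invoke it again here rather than appeal to surface classification.

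The more serious gap is the wedge splitting $X\simeq S\vee X''$. Your proposed fix---collapse $X\setminus S$ relative to $S$ until the intersection becomes contractible---need not make any progress. Take $X$ to be two copies of $\partial\Delta^3$ glued along a common triangular face (five vertices, seven faces; one checks $2f_0(W)>f_2(W)$ for every subcomplex). There are no free edges, and the obvious $S\cong S^2$ meets the remaining three faces in a circle. Every edge of those three faces not lying in $S$ already lies in two of them, so there is nothing to collapse relative to $S$; no simplicial move separates the summands. The paper confronts exactly this: it does \emph{not} attempt to peel off an entire sphere or projective plane. Instead it locates a local configuration---roughly, two triangular disks meeting along two of their three boundary edges through a common vertex---and simplifies by deleting one of the two faces and identifying the remaining pair of edges (Lemma~\ref{check}). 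This preserves homotopy type and admissibility and strictly decreases a carefully chosen well-ordered complexity, but in general it does \emph{not} produce a simplicial complex. For this reason the paper enlarges the ambient category to ``webs'' (stratified $2$-complexes with a length measure on the $1$-stratum) and runs the entire induction there; see Lemmas~\ref{eric1}--\ref{check}. Your argument would need either a comparable enlargement of the category or a genuinely different reduction move that stays simplicial, and neither is supplied.
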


We use this to obtain a linear isoperimetric inequality for null-homotopic loops in $X$. (We precisely define cycles $\gamma$ and the notions of length $L(\gamma)$ and area $A(\gamma)$  on page \pageref{sasha} in Section \ref{prf_thm2}.)

\begin{theorem}\label{mainiso}
For any $\epsilon>0$ there is $\beta>0$ such that if $X$ is a finite, two dimensional simplicial
complex with
$$ (2-\epsilon)f_0(W)>f_2(W)$$ for all nonempty subcomplexes $W \subset X$
then every contractible cycle $\gamma$ satisfies
$$L(\gamma)>\beta A(\gamma).$$
\end{theorem}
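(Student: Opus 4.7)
The plan is to combine a minimal-area filling disk with Euler's formula for a triangulated disk and apply the hypothesis of Theorem~\ref{maintopology} to the image subcomplex.

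Given a contractible cycle $\gamma$ in $X$, fix a simplicial map $f:D\to X$ from a triangulated disk $D$ sending $\partial D$ to a parametrization of $\gamma$, with $f_2(D) = A(\gamma)$ minimized. A standard edge--triangle incidence count (each interior edge is in two triangles, each boundary edge in one) combined with the Euler identity $f_0(D) - f_1(D) + f_2(D) = 1$ yields
\begin{equation*}
f_2(D) \;=\; 2 f_0(D) - L(\gamma) - 2.
\end{equation*}

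Let $W = f(D) \subseteq X$ be the image subcomplex, so that $f_0(W) \le f_0(D)$ always, and $f_2(W) = f_2(D)$ whenever $f$ is injective on $2$-simplices. In that injective case, the hypothesis $(2-\epsilon)f_0(W) > f_2(W)$ combined with the Euler identity gives
\begin{equation*}
2f_0(D) - L(\gamma) - 2 \;=\; f_2(D) \;=\; f_2(W) \;<\; (2-\epsilon)f_0(W) \;\le\; (2-\epsilon)f_0(D),
\end{equation*}
hence $\epsilon f_0(D) < L(\gamma) + 2$. Substituting back,
\begin{equation*}
A(\gamma) \;=\; f_2(D) \;<\; \frac{(2-\epsilon)\bigl(L(\gamma) + 2\bigr)}{\epsilon},
\end{equation*}
which rearranges to $L(\gamma) > \beta(\epsilon) A(\gamma)$ for any $\beta(\epsilon) < \epsilon/(2-\epsilon)$, with the bounded-length case absorbed by further reducing $\beta$. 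Note that vertex injectivity is not needed, only injectivity on $2$-simplices.

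The main obstacle is justifying this injectivity assumption. Minimality of $f_2(D)$ immediately rules out \emph{adjacent} identifications: if two $2$-simplices of $D$ share an edge and have the same image in $X$, we cut $D$ along that edge and discard the redundant pair, producing a filling of strictly smaller area---contradiction. Non-adjacent identifications are the delicate point, and the natural plan is an induction on $A(\gamma)$: when $f_2(W) < f_2(D)$ after removing adjacent pairs, one locates a simplicial arc in $D$ joining distinct preimages of a common $2$-simplex, cuts $D$ along it into two smaller disks filling shorter contractible cycles, and applies the inductive hypothesis to each piece before reassembling to obtain the linear bound for $\gamma$. The accumulated error at each surgery step is bounded by a constant times the cut length, and is absorbed into the final constant $\beta(\epsilon)$; controlling this bookkeeping is what forces $\beta(\epsilon)$ to be taken strictly less than the clean value $\epsilon/(2-\epsilon)$ coming from the injective case.
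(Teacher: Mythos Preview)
Your approach has a genuine gap in the induction step. When you cut $D$ along a simplicial arc $\alpha$ of length $\ell$ joining two identified vertices, the resulting sub-disks $D_1,D_2$ fill cycles $\gamma_1,\gamma_2$ with
\[
L(\gamma_1)+L(\gamma_2)=L(\gamma)+2\ell
\quad\text{and}\quad
A(\gamma_1)+A(\gamma_2)=A(\gamma)
\]
(the second equality holding by minimality of the original filling, since regluing minimal fillings of the $\gamma_i$ along $\alpha$ refills $\gamma$). Applying the inductive bound $L(\gamma_i)>\beta A(\gamma_i)$ and summing yields only
\[
L(\gamma)+2\ell>\beta A(\gamma),
\]
which is useless unless $\ell$ is bounded independently of $A(\gamma)$. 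Nothing in your setup controls $\ell$: the two identified $2$-simplices may sit at opposite ends of $D$, with $\ell$ on the order of the diameter of $D$, hence on the order of $A(\gamma)$ itself. The phrase ``absorbed into the final constant $\beta(\epsilon)$'' hides exactly this unbounded term. (Note too that the $\gamma_i$ are not ``shorter'' cycles as you write; the induction must be on area, and even then the difficulty above remains.)

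This is not a technicality but the heart of the problem. An admissible $X$ may contain small sphere-like or projective-plane-like subcomplexes (regions $W$ with $L(W)\le 0$), and a minimal filling can pass over such a region with arbitrarily high multiplicity while remaining a $1$-immersion. Your ``injective case'' computation never sees this. The paper's proof confronts it directly: it first invokes the homotopy classification (Lemma~\ref{topology}) to pass to a subcomplex $Z$ with $\chi(Z')\le 1$ for every connected $Z'\subseteq Z$; it then stratifies the image of the filling by multiplicity (the complexes $Z_i$, $Z_{L\le 0}$ of Definition~\ref{tarmac}); the high-multiplicity part is handled by Lemma~\ref{below}, which uses that admissible pieces with $L\le 0$ and $\chi\le 1$ have $f_2\le 1/\epsilon$ (Lemma~\ref{popped}) and hence form a finite list of complexes with hyperbolic $\pi_1$; and the low-multiplicity remainder is bounded by a direct $L$-versus-$f_2$ count (Lemmas~\ref{berkeleyone} and~\ref{berkeleytwo}). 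Your Euler-characteristic argument is morally this last step, but you have no mechanism replacing the first three, and in particular you never actually use the conclusion of Theorem~\ref{maintopology} despite citing it.
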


We also need a version of Gromov's general principle that one can go from local linear isoperimetric inequalities to global ones \cite{gromov1}, a method which has been very useful in the study of random groups.

The rest of the paper is organized as follows. Section \ref{prf_thm1} contains the proof of Theorem \ref{theorem 1}.   Section \ref{prf_thm2} contains the outline of the proof of Theorem \ref{theorem 2}.  In Section \ref{homotopy} we prove Theorem \ref{maintopology}.  We use this in Section \ref{oilspill} to prove Theorems \ref{theorem 2} and \ref{mainiso}.  
The appendices prove a technical lemma and the version of Gromov's local to global principle that we need.

\section{Proof of Theorem \ref{theorem 1} } \label{prf_thm1}

If $X$ is a two dimensional
simplicial complex and $v \in F_0(X) $ is a vertex define the {\bf
link} of $v$, denoted $\mbox{lk}_X(v)$, to be the one dimensional
simplicial complex (graph) with
$$F_0(\mbox{lk}_X(v))=\{\{p\}|\{ v,p \}\in F_1(X)\}.$$ and
$$F_1(\mbox{lk}_X(v))=\{\{p,q\}|\{ v,p,q \}\in F_2(X)\}.$$

The key observation necessary to prove Theorem \ref{theorem 1} is the following.
\begin{lemma}
\label{special}
For any $a,b,c \in [n]$ and simplicial complex $Y$ such that
\begin{enumerate}
\item $\mbox{lk}_{Y}(a) \cap \mbox{lk}_{Y}(b)$ is connected and
\item there is $d \in [n]$ such that $\{a,b,d\} \in F_2(Y)$
\end{enumerate}
then the 3-cycle $\{\{a,b\},\{a,c\},\{b,c\}\}$ bounds an embedded disk in $Y$.
\end{lemma}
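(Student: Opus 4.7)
The plan is to realize the disk as two parallel sequences of triangles, one based at $a$ and the other based at $b$, joined along an embedded path in $\mbox{lk}_Y(a) \cap \mbox{lk}_Y(b)$. First I would observe that both $c$ and $d$ are vertices of this intersection graph: $d$ is there because hypothesis (2) gives $\{a,d\},\{b,d\} \in F_1(Y)$, and $c$ is there because the statement implicitly assumes the edges $\{a,c\}$ and $\{b,c\}$ of the boundary 3-cycle lie in $Y$. Hypothesis (1) then lets me choose an embedded path $c = v_0, v_1, \ldots, v_k = d$ in $\mbox{lk}_Y(a) \cap \mbox{lk}_Y(b)$, and by definition of the link intersection, for each edge $\{v_i, v_{i+1}\}$ of this path the two 2-faces $\{a, v_i, v_{i+1}\}$ and $\{b, v_i, v_{i+1}\}$ both lie in $F_2(Y)$.

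Next I would build the disk inductively. Start with the triangle $\{a, b, d\}$, whose boundary 3-cycle is $\{\{a,b\}, \{a,v_k\}, \{b,v_k\}\}$. For $i$ decreasing from $k-1$ down to $0$, attach the pair of 2-faces $\{a, v_i, v_{i+1}\}$ and $\{b, v_i, v_{i+1}\}$ to what has been built. A direct bookkeeping check shows that this double-attachment turns the two edges incident to $v_{i+1}$ on the current boundary into interior edges and introduces $\{a, v_i\}, \{b, v_i\}$ as new boundary edges, so the boundary becomes the 3-cycle $\{\{a,b\}, \{a,v_i\}, \{b,v_i\}\}$. After the final step the boundary is $\{\{a,b\}, \{a,c\}, \{b,c\}\}$, as desired.

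The main thing left to verify is that this subcomplex is an embedded triangulated disk in $Y$. Embeddedness reduces to the observation that the vertices $a, b, v_0, \ldots, v_k$ are pairwise distinct in $Y$: each $v_i$ differs from both $a$ and $b$ because it lies in both of their links, and the $v_i$ are pairwise distinct because the path was chosen embedded; the triangles of the construction are then pairwise distinct because the edges $\{v_i, v_{i+1}\}$ are. That the abstract complex so constructed is combinatorially a disk is an Euler characteristic check ($1 + 2k$ triangles on $k+3$ vertices and $3k+3$ edges, with three boundary vertices and $v_1, \ldots, v_k$ interior). The degenerate case $c = d$ (i.e., $k = 0$) is handled by taking the single triangle $\{a,b,d\} = \{a,b,c\}$ itself. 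I expect the embeddedness check to be the only real obstacle, and it is essentially immediate from the embeddedness of the path, which is precisely what hypothesis (1) supplies.
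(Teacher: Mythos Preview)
Your proposal is correct and follows essentially the same construction as the paper: both use connectivity of $\mbox{lk}_Y(a)\cap\mbox{lk}_Y(b)$ to pick a path from $c$ to $d$, and fill the triangle with the single face $\{a,b,d\}$ together with the fan of pairs $\{a,v_i,v_{i+1}\},\{b,v_i,v_{i+1}\}$ along that path. Your write-up is in fact more careful than the paper's, which simply points to a figure for the embedded-disk claim, whereas you spell out the vertex-distinctness and Euler-characteristic checks; the one point you flag as implicit (that $c$ lies in both links) is automatic here since $Y(n,p)$ has complete $1$-skeleton.
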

\begin{proof}

Since $\mbox{lk}(a) \cap \mbox{lk}(b)$ is connected, there is a sequence $\{x_i\}_{1}^{k}$ such that $c = x_1$, $d = x_k$ and $\{x_i,x_{i+1}\} \in \mbox{lk}(a) \cap \mbox{lk}(b)$ for all $i<k$. The edge $\{x_i, x_{i+1}\} \in \mbox{lk}(a)\cap \mbox{lk}(b)$ iff $\{\{a, x_i, x_{i+1}\}, \{b, x_i, x_{i+1}\}\}\subseteq F_2(Y)$. So we see that $\{\{a,b\},\{a,c\},\{b,c\}\}$ bounds an embedded
disk, as in Figure \ref{fig:simply}.

\begin{figure}
\begin{centering}
\includegraphics[width=2.5in]{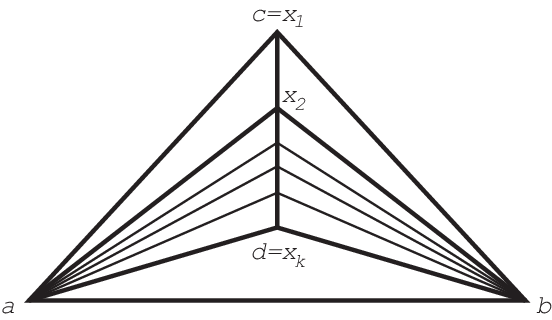}
\end{centering}
\caption{Because $\mbox{lk}(a) \cap \mbox{lk}(b)$ is connected and edge $ab$ is contained in at least one face, the $3$-cycle $abc$ bounds a disk.}
\label{fig:simply}
\end{figure}

\end{proof}


Note that for each pair of vertices $a,b \in [n]$ the distribution of
$$\mbox{lk}_{Y}(a) \cap \mbox{lk}_{Y}(b)$$ is identical to the
Erd\H{o}s-R\'enyi random graph $G(n-2,p^2)$.
To complete the proof of Theorem \ref{theorem 1} we show that if $p$ is sufficiently large then the hypothesis of Lemma \ref{special} are a.a.s.\ satisfied for every distinct $a, b \in [n]$.  This requires bounding the probability that $G(n,p)$ is not connected when $p$ is a bit larger than the threshold of $\log(n)/n$.  The calculation is slightly messy so we delegate the proof to Appendix 1.  

\begin{lemma} \label{TY}
Let $\omega(n) \rightarrow \infty$ as $n \rightarrow \infty$.
If $p= \left( \frac{3\log{n}+\omega(n)}{n} \right) ^{1/2}$ then a.a.s.
\begin{enumerate}
\item $ \mbox{lk}_{Y}(a) \cap \mbox{lk}_{Y}(b) \text{ is connected }  $ and
\item there exists $d \in [n]$ such that $\{a,b,d\} \in F_2(Y)$
\end{enumerate}
for all distinct $\{a, b\} \subseteq [n]$  a.a.s.
\end{lemma}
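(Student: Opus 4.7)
The plan is to verify the two conditions separately and then combine them via a union bound over all pairs $\{a,b\} \subseteq [n]$.

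For condition (2), fix a pair $a,b$. Since the $2$-faces of $Y$ appear independently with probability $p$, the probability that no $d \in [n] \setminus \{a,b\}$ satisfies $\{a,b,d\} \in F_2(Y)$ is exactly $(1-p)^{n-2} \le \exp(-p(n-2))$. With $p^2 = (3\log n + \omega(n))/n$, we have $p(n-2) = \Theta(\sqrt{n \log n})$, so this probability is super-polynomially small in $n$. A union bound over the $\binom{n}{2}$ pairs still leaves a quantity tending to zero, so (2) holds a.a.s.\ for all pairs simultaneously.

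For condition (1), the main observation is distributional: for fixed distinct $a,b$, the graph $\mbox{lk}_Y(a) \cap \mbox{lk}_Y(b)$ has vertex set $[n]\setminus\{a,b\}$, and an edge $\{x,y\}$ is present iff both $\{a,x,y\}$ and $\{b,x,y\}$ lie in $F_2(Y)$. These two events are independent and each has probability $p$, so $\mbox{lk}_Y(a) \cap \mbox{lk}_Y(b)$ is distributed as $G(n-2, p^2)$. With $p^2 = (3\log n + \omega(n))/n$, we sit a factor of three above the Erd\H{o}s--R\'enyi connectivity threshold. The dominant obstruction to connectivity is isolated vertices: the expected number of these in $G(n-2,p^2)$ is
\[
(n-2)(1-p^2)^{n-3} \;\sim\; n \cdot e^{-p^2 n} \;=\; n^{-2} e^{-\omega(n)},
\]
so Markov gives a disconnection probability at most $n^{-2} e^{-\omega(n)}$ from this source. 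One still has to show that non-isolated-vertex disconnections (a component of size $k$ with $2 \le k \le (n-2)/2$) contribute at a rate that is $o(n^{-2})$; this is a standard Erd\H{o}s--R\'enyi calculation that the paper defers to Appendix 1. Taking this on faith, the probability that $\mbox{lk}_Y(a)\cap \mbox{lk}_Y(b)$ is disconnected for \emph{some} pair $\{a,b\}$ is at most $\binom{n}{2} \cdot o(n^{-2} e^{-\omega(n)}) = o(1)$.

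The union bound across pairs does not require independence between different pairs $(a,b)$, so the two estimates combine immediately: with probability $1 - o(1)$, both (1) and (2) hold for every distinct pair $\{a,b\} \subseteq [n]$. The main obstacle in this proof is purely quantitative, namely verifying that the disconnection probability for $G(n-2,p^2)$ is indeed $o(n^{-2})$ at our value of $p^2$, tightly enough to absorb the $\binom{n}{2}$ union-bound factor; the factor of $3$ (rather than $2$) in the hypothesis $p^2 \geq 3\log n / n$ is precisely what provides the needed slack.
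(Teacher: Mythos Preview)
Your proposal is correct and follows essentially the same route as the paper: identify $\mbox{lk}_Y(a)\cap\mbox{lk}_Y(b)$ with $G(n-2,p^2)$, bound the disconnection probability by summing expected component counts (isolated vertices giving the dominant $n^{-2}e^{-\omega(n)}$ term, larger components handled by the standard spanning-tree estimate you defer), and absorb the $\binom{n}{2}$ union-bound factor using the extra $\log n$ in the hypothesis; condition (2) is dispatched identically via $(1-p)^{n-2}$. Your remark that the constant $3$ (rather than $2$) is exactly what buys the slack for the union bound is precisely the point of the Appendix~1 calculation.
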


\begin{pfofthm}{\ref{theorem 1}}
By Lemmas \ref{special} and \ref{TY} we have that a.a.s.\ every 3-cycle is
\contractible. That $Y$ is a.a.s.\ simply connected follows as $F_1(Y)$ is the complete graph and  every $k$-cycle in the fundamental group is a product of 3-cycles.
\end{pfofthm}

A more complicated version of this argument was used in \cite{Kahle_clique} to prove vanishing of $k$th homology $H_k$ for arbitrary $k$, for a different kind of random simplicial complex.

\section{Outline of Theorem \ref{theorem 2}} \label{prf_thm2}

\subsection{Notation}
We will work with simplicial maps between simplicial complexes.  
For a two dimensional simplicial complex $X$ we
write $F_0=F_0(X)$, $F_1=F_1(X)$ and $F_2=F_2(X)$ for the sets of
vertices, edges and faces of $X$ and $f_i=|F_i|$ for the
respective numbers. For an edge $e \in F_1(X)$ we write
$f^e_2=f^2_e(X)=|\{t \in  F_2(X): e \subset
\partial(t)\}|$ for the number of 2-faces containing $e$ in their
boundaries.

\label{sasha}
\begin{definition}
We define $C_r$ to be the length $r$ cycle with $F_0(C_r)=[r]=\{1, \ldots , r\}$
($[0]=\emptyset$) and $$F_1(C_r)=\bigcup_{i=1}^{r-1} \bigg\{ \{i,i+1\}   \bigg\} \cup\bigg\{\{r,1\}\bigg\}.$$
\end{definition}

\begin{definition}
Similarly we define $I_r$ to be the length $r$ path with $F_0(I_r)=\{0,1, \ldots , r\}$
and $$F_1(I_r)=\bigcup_{i=0}^{r-1} \bigg\{ \{i,i+1\}   \bigg\}.$$
\end{definition}

\begin{definition}
Let $\gamma:C_r\to X$. We say $(C_r\xrightarrow{b}D\xrightarrow{\pi}X)$ is a  {\bf filling}
of $\gamma$ if $\gamma=\pi b$ and the mapping cylinder $\text{Cyl}(b)$ of $b$
is a disk. 
This condition on $b$ is equivalent to a simplicial Van Kampen diagram.  
\end{definition}

\begin{definition} \label{length}
Define the {\bf length} of a path $\gamma:I_r\rightarrow X$ or a loop 
$\gamma:C_r\rightarrow X$ to be $L(\gamma)=r$ (or $\epsilon r$ in the 
scaled situation of section \ref{appendix2}).
\end{definition}

\begin{definition} \label{area}
 Define the {\bf area} of a curve $\gamma$ to be
$$A(\gamma)=\hbox{min}\{f_2(D)|  \hbox{ $(C\xrightarrow{b}D\xrightarrow{\pi}X)$ is a filling of }\gamma\}$$
if $\gamma$ is contractible (or $\epsilon^2f_2(D)$ in the scaled situation of section \ref{appendix2}) 
and $A(\gamma)=\infty$ if $\gamma$ is not contractible. We say that a filling $(C\xrightarrow{b}D\xrightarrow{\pi}X)$ of
$\gamma$ is {\bf minimal} if $A(\gamma)=f_2(D)$.
\end{definition}

\begin{figure}
\begin{centering}
\includegraphics[width=4.5in]{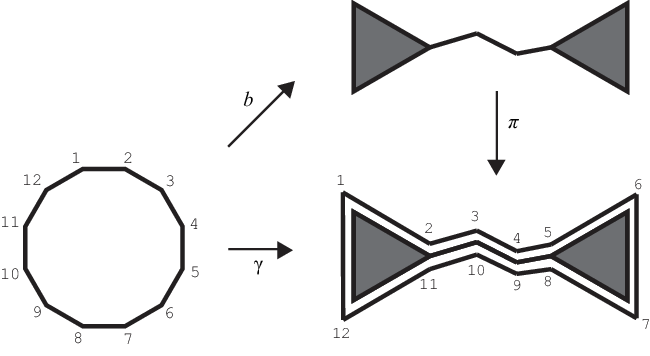}
\end{centering}
\caption{A minimal filling of a $12$-cycle with $A ( \gamma) =2$.}
\label{fill}
\end{figure}

\subsection{Sketch of proof of Theorem \ref{theorem 2}}
Write $\id:[3]\to [3]$ for the identity map.
We show that for a typical $Y$ (with probability approaching
1) the cycle $\id$ is not contractible and thus $Y$ is not simply
connected. The main step is to prove a linear isoperimetric
inequality. This means that there is
$\rho$ such that for a typical $Y$ any
contractible loop $\gamma:C\to Y$ satisfies
\begin{equation} \label{newlipe}
A(\gamma) \leq \rho L(\gamma).
\end{equation}
Once we have a linear isoperimetric inequality for a typical $Y$
we have
$$\prob(3\rho<A(\id)<\infty)\to 0.$$
Then we complete the proof by showing that
$$\prob(A(\id) \leq 3\rho)\to 0.$$
To carry out this program we introduce the
following definitions. The parameter $e$ determines the density 
at which a finite complex will appear in a random complex.  
Throughout this section, $X$ is a $2$-complex with vertex set $F_0(X)=[n]$.

\begin{definition}
We write $$e(X)=\min_{Z\subseteq X}\left({f_0(Z) \over
  f_2(Z)}\right).$$
More generally if  $[w]\subseteq
F_0(X)$ then write
 \begin{equation}\nonumber
e_w(X)=\min_{ \substack{Z\subseteq X \\ [w] \subset
F_0(Z)}}\left({f_0(Z)-w \over
  f_2(Z)}\right).
  \end{equation}
We say  $X$  is {\bf $\epsilon$-admissible} if $e(X)\geq \epsilon$. For some $w \leq n$
  we say $X$ is {\bf $(\epsilon,w)$-admissible} if $e_w(X)\geq \epsilon$.
  We say a $2$-complex $X$ is {\bf admissible} ({\bf $w$-admissible}) 
if there is some $\epsilon>\frac12$ such that $X$ is $\epsilon$-admissible
($(\epsilon,w)$-admissible).
We define things generally for convenience in notation, but in fact we will only ever use the case $w=3$.
\end{definition}

The following lemma is the first step in showing the existence of a linear isoperimetric
inequality.
\begin{lemma} \label{huji}
For every $\epsilon>\frac12$ there is $\lambda$ such that for every $X$
with $e(X)\geq \epsilon$, every contractible loop $\gamma:C\rightarrow X$ satisfies 
\begin{equation}\label{lipe}
A(\gamma) \leq \lambda L(\gamma).\end{equation}
\end{lemma}

The proof of Lemma \ref{huji} appears on page \pageref{malia} and requires
the use of several other lemmas in between.
The key to proving Lemma \ref{huji} is to analyze the topology of
admissible complexes. In Lemma \ref{topology} we show
that every admissible two complex is homotopy
equivalent to a wedge product of circles, spheres and projective
planes.

We cannot apply Lemma \ref{huji} directly to get a linear
isoperimetric inequality for $Y$ because for a typical $Y$ we have
that $f_2(Y) \geq O( n^2)$ (since we may assume $\epsilon < 1$) and
$f_0(Y)=n$.  Thus $e(Y)=O(\frac1n)$.  Instead we analyze the subcomplexes  $X
\subset Y$ with $f_2(X)$ small. The next lemma tells us which small
subcomplexes can be embedded in a typical $Y$.

\begin{definition} For simplicial complexes $Z$ and $X$ with
$F_0(Z) \cup F_0(X) \subset \Z^+$, and with
$[w] \subseteq F_0(Z) \cap F_0(X)$, a {\bf $w$-inclusion} $g$ of $Z$ into $X$ is an
injective simplicial map $g:Z \to X$ such that $g(i)=i$ for all $1\leq i \leq w$.
\end{definition}

\begin{definition}
Let $X$ be a simplicial complex with $F_0(X)=[n]$ for some $n$.
$X$ is {\bf $(\epsilon, m)$-sparse}
if for every $2$-complex $Z$ with
\begin{enumerate}
\item $f_2(Z)\leq m$ and \item $f_0(Z) < \epsilon f_2(Z)$
\end{enumerate}
there is no embedding of  $Z$ in $X$.
$X$ is called {\bf $(\epsilon, m,3)$-sparse}
if  for every $2$-complex $Z$ with
\begin{enumerate}
\item[(3)]  $[3]\subseteq
F_0(Z)$,
\item[(4)] $f_2(Z)\leq m$ and \item[(5)]  $f_0(Z) -3< \epsilon f_2(Z)$
\end{enumerate}
there is no $3$-inclusion
of $Z$ into $X$.
\end{definition}

\begin{lemma} \label{derandomize}
For every $m \in \Z^+$, $\epsilon > \frac12 $, and $p=O(n^{-\epsilon})$, we have that $Y \in Y(n, p)$ is  $(\epsilon, m,
3)$-sparse a.a.s.
\end{lemma}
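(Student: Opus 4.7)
The plan is to use a straightforward first-moment argument. Fix $\epsilon > 0$ and $m \in \Z^+$. Because condition (5) forces $f_0(Z) - 3 < (\tfrac{1}{2} + \epsilon) f_2(Z) \leq (\tfrac{1}{2} + \epsilon) m$, only finitely many isomorphism types of $Z$ can violate the $(\epsilon, m, 3)$-sparsity condition; in particular both $v := f_0(Z)$ and $t := f_2(Z)$ are bounded by constants depending only on $\epsilon$ and $m$. So it suffices to show that for each such fixed $Z$, the expected number of $3$-inclusions of $Z$ into $Y \in Y(n,p)$ tends to $0$, and then to apply Markov's inequality and a union bound over the finitely many offending $Z$.

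For a fixed $Z$ with $v$ vertices and $t$ two-faces, a $3$-inclusion $g: Z \to Y$ is determined by the injective assignment of the $v - 3$ non-base vertices of $Z$ to distinct elements of $[n] \setminus [3]$, and then requires that all $t$ image two-faces be present in $Y$. The number of such assignments is at most $n^{v-3}$, and each succeeds with probability exactly $p^t$. Hence
\begin{equation}
\E\bigl[\#\{3\text{-inclusions of } Z \text{ into } Y\}\bigr] \;\leq\; n^{v-3} p^t.
\end{equation}
Using the hypothesis $p = O(n^{-1/2 - \epsilon})$, write $p \leq C n^{-1/2 - \epsilon}$ for some constant $C$ and all large $n$, giving
\begin{equation}
\E[\#\{3\text{-inclusions of } Z\}] \;\leq\; C^t \, n^{\,v - 3 - (1/2 + \epsilon)t}.
\end{equation}
By condition (5) the exponent $v - 3 - (\tfrac{1}{2} + \epsilon) t$ is strictly negative, and $C^t \leq C^m$ is bounded, so the expectation tends to $0$ as $n \to \infty$.

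By Markov's inequality, the probability that any $3$-inclusion of this particular $Z$ exists is $o(1)$. Summing over the finitely many isomorphism types of $Z$ satisfying (3)--(5) (whose count depends only on $\epsilon$ and $m$, not on $n$), the union bound yields that a.a.s.\ no such $3$-inclusion exists, i.e., $Y$ is $(\epsilon, m, 3)$-sparse. There is no real obstacle here beyond verifying that the exponent is genuinely negative; the proof is essentially a clean application of the first moment method, relying crucially on the fact that both $v$ and $t$ are bounded once $m$ and $\epsilon$ are fixed, so that the finite enumeration and union bound go through.
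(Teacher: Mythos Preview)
Your proof is correct and follows essentially the same approach as the paper: bound the expected number of $3$-inclusions of each fixed offending $Z$ by $n^{f_0(Z)-3}p^{f_2(Z)}$, use $p=O(n^{-1/2-\epsilon})$ together with condition (5) to see the exponent of $n$ is negative, and then invoke Markov's inequality and a union bound over the finitely many relevant isomorphism types. If anything, your write-up is slightly more careful than the paper's in justifying why only finitely many $Z$ need be considered.
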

\begin{proof}
For fixed $m$ and $r$ there are only finitely many complexes $Z$ with $f_2(Z)<m$. Thus to prove that
$Y$ is $(\epsilon,m,3)$-sparse a.a.s.\  we only
need to prove that for any given complex $Z$ which does not satisfy conditions
3, 4 and 5 that
\begin{equation}\label{pcmi}
 \prob\bigg(\text{$Z$ has a $3$-inclusion in $Y$ }\bigg)=0\ \ a.a.s.
\end{equation}
If $Z$ satisfies conditions 3, 4 and 5 then choose $\alpha>0$ with 
$$f_0(Z)-3-\epsilon f_2(Z)<-\alpha$$
and hence
\begin{eqnarray*}
\prob\big(\text{$Z$ has a $3$-inclusion into $Y$}\big)
     &\leq & \E \big(\text{number of $3$-inclusions of $Z$ into $Y$}\big)\\
     &\leq &n^{f_0(Z)-3}p^{f_2(Z)}\\
     &\leq &n^{f_0(Z)-3}C^{f_2(Z)}n^{-\epsilon f_2(Z)}\\
     &< &C^m n^{-\alpha}.
\end{eqnarray*}
\end{proof}

We establish a linear isoperimetric inequality for $Y$ by combining Lemmas \ref{huji} and \ref{derandomize}, together with Gromov's local to global principle. Similar results for groups appear in \cite{gromov2} and \cite{Pap} but we require the result for $2$-dimensional simplicial complexes, so we include a proof in Appendix 2 for the sake of completeness.

\begin{theorem}  \label{gromov}
If $\rho\geq 4$ and $X$ is a finite simplicial complex for which every loop
$\gamma:C_r\rightarrow X$ with $A(\gamma)  \leq  44\rho^2$  satisfies $A(\gamma) \leq {\rho\over 44} r$ 
then every
contractible loop $\gamma:C_r\rightarrow X$ satisfies $A(\gamma) \leq \rho r$.
\end{theorem}

The local to global principle gives us the following.

\begin{lemma} \label{huji3}
For every $\epsilon>\frac12$ there are $m$ and $\rho$ such that every
contractible loop $\gamma:C_r \to X$ in an $(\epsilon,m)$-sparse complex $X$ 
satisfies $$A(\gamma)<\rho L(\gamma)$$
and if $X$ is also $(\epsilon,m,3)$-sparse then the loop 
$\hbox{Id}_{[3]}:C_3\rightarrow X$ is not contractible.  
\end{lemma}

\begin{proof}
For the first part, given $\epsilon>\frac12$ choose $\lambda$ as in Lemma \ref{huji} and then
use $m=(44)^3(\lambda)^2$ and $\rho=\hbox{max}\{4,44\lambda\}$ in Theorem \ref{gromov}.  

For the second part assume there is a minimal filling 
$C_3\xrightarrow{b}D\xrightarrow{\pi}X$ of $\hbox{Id}_{[3]}$
and with $\lambda$ as above take $m=\hbox{max}\{44^3\lambda^2,3(44)\lambda\}$.
By the first part, 
$f_2(\hbox{Im}(b))\leq f_2(D)=A(\hbox{Id}_{[3]})\leq 3\rho \leq m$ so 
$e_3(\hbox{Im}(b))>\frac12$.  Lemma \ref{hiji2} below now gives a contradiction.  
\end{proof}

The same technology that we use to prove Lemma \ref{huji} can also
be used to prove Lemma \ref{hiji2} on page \pageref{proof_hiji2}.

\begin{lemma} \label{hiji2}
For every $X$ such that $[3]\subseteq F_0(X)$ with $e_3(X) > {1\over 2}$ the curve $\id$ is not contractible in $X$.
\end{lemma}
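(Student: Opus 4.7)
I would argue by contradiction: suppose $\id$ is contractible in $X$, so there exists a minimal filling $(C_3 \xrightarrow{b} D \xrightarrow{\pi} X)$ with $k := f_2(D) = A(\id) < \infty$. Since $D$ is a triangulated disk with boundary $C_3$, a short incidence count (each interior edge is in two $2$-faces, each of the three boundary edges in one) gives $3k = 2 f_1(D) - 3$, and Euler's formula $\chi(D)=1$ then yields $f_0(D) = (k+5)/2$; in particular $k$ is odd.

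Following the same technology announced for Lemma \ref{huji}, the next step is to perform the standard disk-diagram reductions so that $\pi$ is injective on $F_2(D)$. The only reductions needed are surgeries that remove pairs of $2$-faces of $D$ with the same $\pi$-image: cut $D$ along a shortest combinatorial arc joining the two identified triangles, re-glue to produce a smaller disk with the same boundary, and observe that $f_2$ strictly decreases. Minimality of the original filling therefore rules out any such identification, so $\pi$ is injective on $F_2(D)$, and the image $Z := \pi(D) \subseteq X$ satisfies $[3] \subseteq F_0(Z)$, $f_2(Z) = k$, and $f_0(Z) \le (k+5)/2$. By the definition of $e_3$,
\[
e_3(X) \;\le\; \frac{f_0(Z) - 3}{f_2(Z)} \;\le\; \frac{(k+5)/2 - 3}{k} \;=\; \frac{k-1}{2k}.
\]
Combined with the topological classification of admissible complexes that underlies Lemma \ref{huji} and Theorem \ref{maintopology}, this bound on $e_3$ of the image subcomplex is incompatible with the hypothesis $e_3(X) > 0$, yielding the desired contradiction. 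In effect, the Euler computation forces any filling of $\id$ to produce a subcomplex containing $[3]$ whose vertex-to-face ratio is bounded above by $(k-1)/(2k)$, and the hypothesis on $e_3$ rules this out.

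The main obstacle is the surgery step establishing injectivity of $\pi$ on $F_2(D)$. When two identified $2$-faces are non-adjacent in $D$, the cut-and-reglue procedure is subtler than a local bigon collapse: one has to verify that the resulting complex is still a triangulated disk with boundary $C_3$, that the boundary map $b$ is preserved, and that $f_2$ actually drops. Carrying this out rigorously needs the machinery of minimal disk diagrams from geometric group theory --- exactly the sort of input one expects from the ``same technology'' appealed to by the authors. Once that reduction is in place, the rest of the argument is the Euler-characteristic bookkeeping displayed above.
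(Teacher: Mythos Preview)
There are two genuine gaps.

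\emph{The final step does not yield a contradiction.} Your displayed inequality gives only $e_3(X)\le (k-1)/(2k)$, which is strictly positive for every $k\ge 2$ and hence perfectly compatible with $e_3(X)>0$. The appeal to Theorem~\ref{maintopology} cannot help: that result concerns the condition $2f_0>f_2$ (i.e.\ $e>1/2$), which is neither the present hypothesis nor a consequence of your bound. In fact $e_3(X)>0$ alone does not suffice for the conclusion of the lemma: take $F_0(X)=\{1,2,3,4\}$ and $F_2(X)=\{124,134,234\}$; then $e_3(X)=1/3>0$ while $\id$ is plainly contractible (with $k=3$, matching your formula). The intended hypothesis is $e_3(X)>1/2$ (i.e.\ $3$-admissibility), and under \emph{that} hypothesis your bound $(k-1)/(2k)<1/2$ would indeed be contradictory --- provided the next gap can be closed.

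\emph{Face-injectivity of $\pi$ is not a consequence of minimality.} What minimality buys you (Lemma~\ref{embedding}) is that $\pi$ is a $1$-immersion: two $2$-faces of $D$ with the same image cannot share an edge. It does not rule out edge-disjoint pairs $t_1,t_2$ with $\pi(t_1)=\pi(t_2)$. Your proposed cut-and-reglue does not produce a disk in that situation: removing the open triangles $t_1,t_2$ from $D$ yields a pair of pants ($\chi=-1$), and identifying the two new boundary circles gives a one-holed surface still with $\chi=-1$, never a disk. This is exactly the familiar fact that \emph{reduced} van Kampen diagrams (no adjacent cancelling pair) may nonetheless repeat cells; the ``machinery of minimal disk diagrams'' you invoke does not deliver face-injectivity.

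For comparison, the paper's proof sidesteps both issues. It first uses Lemma~\ref{topology} to pass to a subcomplex $Z$ in which every connected subcomplex has $\chi\le 1$, takes a minimal filling there, sets $Z'=\pi(D)$, and uses only the $1$-immersion property to deduce $L(Z')\le L(D)\le 3$. Feeding $\chi(Z')\le 1$, $L(Z')\le 3$ and $w=3$ into the inequality behind Lemma~\ref{popped} gives $(2e_3(Z')-1)f_2(Z')\le 2-6+3=-1$, forcing $f_2(Z')<0$ once $e_3>1/2$. The point is that bounding $L$ and $\chi$ of the image is robust under collapsing of faces by $\pi$, whereas your direct vertex/face count is not.
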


\begin{pfofthm}{\ref{theorem 2}}
That $\pi_1(Y)$ is nontrivial follows from Lemmas
\ref{derandomize} and \ref{huji3}.

That it is hyperbolic
follows from Lemmas \ref{derandomize} and \ref{huji3}, as follows.  If there is a linear isoperimetric inequality on $Y$,
then there is a linear isoperimetric inequality on $\pi_1 (Y)$ as well; since $Y$ is compact, $\pi_1(Y)$ is quasi-isometric to $\widetilde{Y}$, the universal cover of $Y$.  Groups which satisfy a linear isoperimetric inequality also satisfy a ``thin triangles'' condition and are Gromov hyperbolic \cite{gromov1}.
\end{pfofthm}

\section{Homotopy type of admissible 2-complexes} \label{homotopy}


The following lemma is a strengthening of Theorem \ref{maintopology}.

\begin{lemma}\label{topology}
If $X$ is an admissible, finite, connected and two dimensional simplicial
complex then $X$ has the homotopy
type of a wedge of circles, spheres and real projective planes.
(Thus $\pi_1 ( X ) $ is isomorphic to a free product of $\Z$s and $\Z/ 2\Z$s and is hyperbolic).

Moreover there is a subcomplex $Z\subseteq X$ with
$F_1(Z)=F_1(X)$ for which the inclusion induces an isomorphism of
fundamental groups
and $\chi (Z')\leq 1$ for any
connected subcomplex $Z'\subseteq Z$.
\end{lemma}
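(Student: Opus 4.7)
My plan is to prove Lemma \ref{topology} by strong induction on $f_0(X)+f_1(X)+f_2(X)$, handling the homotopy classification and the construction of $Z$ simultaneously.  Without loss of generality we reduce to $X$ connected, since admissibility passes to each connected component.  The base case $f_2(X)=0$ is immediate:  a connected graph deformation retracts onto $\bigvee S^1$, we take $Z=X$, and every connected subcomplex of a graph has $\chi\le 1$.  The group-theoretic conclusions follow formally from the homotopy classification, since spheres contribute nothing to $\pi_1$, each $\R P^2$ summand contributes a $\Z/2\Z$ factor, and free products of the cyclic groups $\Z$ and $\Z/2\Z$ are virtually free and hence hyperbolic.

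For the inductive step with $f_2(X)\ge 1$, the first case is that some $2$-face $\sigma$ has an edge $e$ with $f^2_e(X)=1$.  The simplicial collapse $X\searrow X_1:=X\setminus\{e,\sigma\}$ is then a homotopy equivalence, and $X_1$ is admissible because every subcomplex of $X_1$ is also a subcomplex of $X$.  The inductive hypothesis applied to $X_1$ yields $Z_1\subseteq X_1$, and we take $Z:=Z_1\cup\{e,\sigma\}$.  The pair $(\sigma,e)$ is again a free face in $Z$ (no face of $Z_1$ contains $e$), so $Z\searrow Z_1$; the two parallel collapses identify the inclusion $Z\hookrightarrow X$ with $Z_1\hookrightarrow X_1$ up to homotopy, giving the required $\pi_1$-isomorphism.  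For the Euler bound we split by whether $e$ and $\sigma$ lie in a given connected $Z'\subseteq Z$; the two non-collapsed edges of $\sigma$ keep $Z'\setminus e$ connected whenever $\sigma\in Z'$, so no Euler jump occurs.

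The substantive case is that no such free edge exists, so every edge in any $2$-face lies in at least two $2$-faces.  Double-counting edge-face incidences on any $2$-dimensional connected component $C$ gives $f_1(C)\le\tfrac{3}{2}f_2(C)$, and admissibility then yields
\begin{equation*}
\chi(C) = f_0(C)-f_1(C)+f_2(C) \ge f_0(C)-\tfrac{1}{2}f_2(C) > \epsilon f_2(C) > 0.
\end{equation*}
The plan is to use this positivity to find an embedded subcomplex $S\subseteq C$ homeomorphic to $S^2$ or $\R P^2$, and then show that $C$ is homotopy equivalent to a wedge of $S$ with an admissible complex $C'$ having strictly fewer $2$-faces.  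Concretely I would pass to a subcomplex $S\subseteq C$ minimal among those that are $2$-dimensional, connected, have no free edge, and have $\chi>0$; minimality combined with the no-free-edge condition should force $S$ to be a closed pseudomanifold, and a closed pseudomanifold with $\chi>0$ is either $S^2$ or $\R P^2$.  Removing one face $\sigma_0\in F_2(S)$ yields a complex to which induction applies, and one verifies $C\simeq (C\setminus\sigma_0)\vee S$ using that in the $S^2$ case $S\setminus\sigma_0\simeq D^2$ and in the $\R P^2$ case $\pi_1(S\setminus\sigma_0)\simeq\Z$ with the boundary of $\sigma_0$ twice a generator.  The subcomplex $Z$ is inherited from the inductive one together with enough of $S$ to preserve $\pi_1$ without creating a sphere subcomplex.

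The main obstacle is this non-collapsible case: producing the embedded $S^2$ or $\R P^2$ from the hypothesis that $\chi(C)>0$ and no edge is free.  One must rule out that a minimal such $S$ has non-manifold structure (for instance three $2$-faces sharing a single edge), and this is where the full strength of admissibility on subcomplexes of $S$ is needed.  This pseudomanifold reduction, together with the bookkeeping of $Z$ across both inductive cases (especially the $\R P^2$ wedge summand, where a naive inclusion might create an unwanted sphere subcomplex and violate $\chi(Z')\le 1$), is where I expect the bulk of the combinatorial work of Section \ref{homotopy} to lie.
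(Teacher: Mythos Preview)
Your free-edge case is fine, but the ``substantive case'' contains a genuine error, not merely an acknowledged gap.  The wedge decomposition $C\simeq (C\setminus\sigma_0)\vee S$ is false when $S\cong\R P^2$: already for $C=\R P^2$ itself, $C\setminus\sigma_0$ is a M\"obius band $\simeq S^1$, and your formula would give $\R P^2\simeq S^1\vee\R P^2$, which is wrong on $\pi_1$.  The point is that attaching $\sigma_0$ along $\partial\sigma_0\sim 2\alpha$ imposes the relation $\alpha^2=1$ in $\pi_1(C\setminus\sigma_0)$; it does not adjoin a fresh $\Z/2\Z$ free factor.  For your scheme to work you would need the image of $\alpha$ in $\pi_1(C\setminus\sigma_0)$ to be a basis element of a free $\Z$ wedge summand, and nothing in your setup controls where $\alpha$ lands under the inductive homotopy equivalence.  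The same lack of control already threatens the pseudomanifold step: minimality among subcomplexes with $\chi>0$ and no free edge does not force $f^2_e(S)\le 2$, since deleting a face drops $\chi$ by $1$ and can destroy the hypothesis $\chi>0$; and even a genuine closed $2$-pseudomanifold with $\chi>0$ can have non-circle vertex links (two boundaries of tetrahedra glued at a vertex has $\chi=3$).

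This is precisely the obstacle that drives the paper out of the simplicial category.  Rather than trying to locate an embedded $S^2$ or $\R P^2$ and split it off, the paper performs local moves that are homotopy equivalences at every step.  An averaging argument over vertices and circular edges (Lemma~\ref{brewedawakenings}) shows that in a minimal counterexample some link is so constrained that two faces $t,t'$ share at least two boundary edges; one then either deletes $t$ (if $\partial t=\partial t'$) or deletes $t$ and identifies the two remaining edges of $\partial t\cup\partial t'$ (Lemma~\ref{check}).  The second move is a homotopy equivalence but produces a non-simplicial object, which is why Section~\ref{homotopy} sets up stratified complexes and webs and proves the stronger Lemma~\ref{eric1} in that generality.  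The ``moreover'' clause about $Z$ is handled separately once the homotopy type is known: one takes $Z\subseteq X$ minimal with $F_1(Z)=F_1(X)$ and $\pi_1(Z\hookrightarrow X)$ an isomorphism, and uses an $H_2(\,\cdot\,;\Z/2\Z)$ argument to show that any surviving $S^2$ wedge summand would let one drop a $2$-face from $Z$, contradicting minimality.
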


The proof of Lemma \ref{topology} requires several other intermediate results,
and appears on page \pageref{proof_topology}.

\subsection{Stratified complexes and webs}

The proof of Lemma \ref{topology} is by induction.
We assume by way of contradiction that there is a
minimal counterexample and make reduction moves to find a smaller one.
However, there is a fairly serious complication in that
the reduction moves do not always leave us with a simplicial complex. For this reason we introduce
the following more general complexes.

For a compact manifold with boundary $M$ we use the notation
$\partial M$ for the boundary of $M$,  and $M^\circ$ for the interior.
(For a $0$-dimensional $M_0$,
$\partial M_0 = \emptyset$ and $M_0^\circ = M_0$).


\begin{definition}
A ($2$-dimensional) {\bf stratified complex} $N$ consists of
 \begin{enumerate}
\item a topological space $N$ homeomorphic to the realization of a
finite simplicial $2$-complex,
 \item for each $i \in \{0, 1, 2 \}$, a compact $i$-dimensional manifold with boundary $M_i$ (not necessarily connected), and
 \item immersions  $\psi_i :M_i \rightarrow N$ such that the restrictions to interiors $\psi_i |_ {M_i^\circ}$ are embeddings,
the images $\psi_i(  M_i^\circ)$ partition $N$, and  $\psi_i(\partial
M_i)\subseteq \psi_{i-1}(M_{i-1})$.
\end{enumerate}

We call the connected components of $M_i$ the $i$-dimensional {\bf faces} of $N$ and use
the upper index to distinguish them. The set
of $i$-dimensional faces of $N$ is denoted by $F_i(N)$ so that $M_i = \cup_{\phi\in F_i(N)}M_i^\phi$.
\end{definition}

We refer the reader to Figure \ref{fig:strata} for an example of a stratified complex $N$.  Note that the structure is not quite the same as a CW-complex, since the cells need not be topological disks and the $\psi_i$ are required to be immersions.

\begin{definition} \label{shout}
If $N$ is a stratified complex, $i < i'$, $u\in F_i(N)$ and
$u'\in F_{i'}(N)$  then write:
\begin{enumerate}
\item  $f_i(N)=|F_i(N)|$,\\
\item  $f_{u'}^{u}(N)=f_{u}^{u'}(N)=|\psi_i^{-1}(x) \cap M^{u'}_{i'}|$ for any $x \in \psi_i( M^{\circ u}_i)$,\\
\item  $f_{u'}^{i}(N)=\sum_{u \in F_i(N)}f_{u'}^{u}(N)$ and\\
\item  $f_{i'}^{u}(N)=\sum_{u' \in F_{i'}(N)}f_{u'}^{u}(N)$.\\
 \end{enumerate}
\end{definition}

\begin{figure}
\begin{centering}
\includegraphics[width=3.75in]{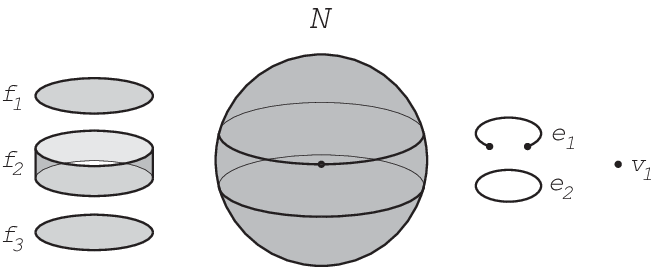}
\end{centering}
\caption{A stratified complex $N$, homeomorphic to a sphere. In this example $f_0(N) = 1$, $f_1(N)=2$, and $f_2(N)=3$, and also have, for example, that $f^v_1 = 2$. $f^{e_1}_2 = 2$, $f^v_2 =2.$ }
\label{fig:strata}
\end{figure}

For every stratified complex $N$ and $e \in F_1(N)$, $M_1^e$ is homeomorphic to either 
an interval or a circle. At times we
need to distinguish these cases, and so introduce the following additional notation.

\begin{definition} We write
\begin{enumerate}
\item $F_{1,c}(N)=\{e\in F_1(N)|M_1^e\cong S^1\}\subseteq F_1(N)$,
\item $f_{1,c}(N)=|F_{1,c}(N)|$ and
\item for a face $u'\in F_2(N)$ write $f_{u'}^{1,c}=\sum_{u \in F_{1,c}(N)}f_{u'}^{u}(N)$ 
\end{enumerate}
\end{definition}

\begin{definition} \label{webdefinition}
A {\bf web} $W$ is a stratified complex with an absolutely continuous measure $\mu$ on
$M_1$ which induces via $\psi$ a measure (also $\mu$) on
$\partial M_2$ (since $\psi|_{\partial M_2}$ is a covering map) 
with $\mu (M_1^e) \in \N$ for every $e\in F_1(W)$.
This is equivalent to a positive integral length function on the $1$-faces.

A {\bf subweb} $W'$ of a web $W$ is uniquely specified by subsets
$F_i(W') \subseteq F_i(W)$. If $W$ is a web and $v\in F_0(W)$ is a
vertex, then $\hbox{lk}_W(v)$ is the link of $v$ in $W$ which is
a stratified one dimensional complex, with
$f_i(\hbox{lk}_W(v))=f^v_{i+1}(W)$.
\end{definition}

\begin{definition}
If $X$ is a finite simplicial complex then $W=W(X)$ is the {\bf
associated} web with $|X|=|W|$, $F_i(W)=F_i(X)$, $M_i(W)=F_i(X)
\times \Delta_i$ (a disjoint union of standard simplices), $\psi$
restricted to each face is an embedding and every edge $e\in F_1$
has length one ($\mu M_1^e = 1$). Thus we can consider simplicial
complexes as special cases of webs.
\end{definition}

Two very useful functions from webs to integers are the Euler
characteristic $\chi (W)=\chi (|W|)$ and the length
 $$L(W)= 2\mu (M_1(W)) - \mu(\partial M_2(W)).$$
\begin{definition}
We say that a nonempty web $W$ is {\bf admissible} if every nonempty
subweb $W'$ satisfies $$(2\chi + L)(W') > 0.$$
\end{definition}
\begin{definition}
Other useful functions from webs to integers
include
\begin{enumerate}
\item $d(W)=\hbox{max}\{i|f_i(W)>0\}$ (dimension),
\item $\delta (W)=\hbox{min}\{f_i^u|u \in F_{i-1}(W), 0<i\leq d(W)\}$ (minimum degree) and
\item $\girth (W)=\hbox{min}\{\mu (S)|f:S
\rightarrow \psi M_1 \hbox{ is an isometric embedding of a
circle}\}$ (girth).
\end{enumerate}
\end{definition}

\begin{definition}
A web $W$ is a {\bf refinement} of another web $W'$ (which is a 
{\bf coarsening} of $W$) if there is a
homeomorphism $r:|W|\rightarrow |W'|$ and for each $u\in F_i(W')$
there is a subweb $W^u$ of $W$ with the restriction of $r$ to
$|W^u|\subseteq |W|$ a homeomorphism onto $\psi(M_i^u)$.  
The measure $\mu'$ is induced by $\mu$.  
\end{definition}

Up to isomorphism, a refinement depends
only on the subwebs $\{\{ W^u\}_{u\in F_i(W')}\}_i$.

\begin{lemma} \label{tahini}
If $X$ is a simplicial $2$-complex, $W=W(X)$ is the associated web 
and $W'$ is a proper coarsening then
\begin{enumerate}
\item $\girth (W) \geq 3$,
\item if $X$ is admissible then so is $W$,
\item $\chi(M_2(W'))=\sum_{t \in F_2(W')} \chi(M_2^t)$,
\item $\chi(M_0(W'))=f_0(W')$,
\item[(5)] $|W'|\cong|W|$,
\item[(6)] $d(W')=d(W)$,
\item[(7)] $\girth (W')=\girth (W)$,
\item[(8)]$\delta (W)=\hbox{min}\{\delta (W'), 2\}$ and
\item[(9)] if $W$ is admissible
then so is $W'$.
\end{enumerate}
\end{lemma}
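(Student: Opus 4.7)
The plan is to verify the nine claims as a sequence of structural bookkeeping observations about the associated web $W=W(X)$ and about refinements $W\to W'$. Claims (3) and (4) are immediate from additivity of Euler characteristic over disjoint unions, since $M_2(W')$ is by definition the disjoint union of the components $M_2^t$ and $M_0(W')$ is a disjoint union of $f_0(W')$ points. Claims (5) and (6) are immediate from the definition of refinement: the homeomorphism $r$ is built in, and each $i$-cell of $W'$ is refined by cells of dimension $\leq i$ that still cover its interior, so top dimensions agree. Claim (1) observes that the $1$-skeleton of a simplicial complex is a simple graph with edges of unit length, so any isometrically embedded circle uses at least three edges. Claim (7) holds because $r$ restricts to a measure-preserving homeomorphism on $1$-skeletons (as part of the measure axiom for refinements), so isometric circle embeddings correspond.

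For (2), any subweb of $W(X)$ is forced by the closure axiom $\psi(\partial M_2)\subseteq\psi(M_1)$ to be $W(X'')$ for a simplicial subcomplex $X''\subseteq X$. Writing $f_i=f_i(X'')$, direct computation gives $\chi(W(X''))=f_0-f_1+f_2$ and $L(W(X''))=2f_1-3f_2$ (each triangle has three unit boundary edges), hence $(2\chi+L)(W(X''))=2f_0-f_2$. Admissibility of $X$ (i.e.\ $2f_0(X'')>f_2(X'')$ for every subcomplex) is then precisely admissibility of $W(X)$. For (9), both $\chi$ and $L=2\mu(M_1)-\mu(\partial M_2)$ are refinement-invariant: $\chi$ by (5), and $L$ because the refinement axioms require the edge-measure on $\psi(M_1^{e'})$ to pull back to $\mu$. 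Given a nonempty subweb $W''$ of $W'$, the union of the subwebs $W^{u'}$ for $u'\in F_\ast(W'')$ is a nonempty subweb of $W$ with the same value of $2\chi+L$, and positivity transfers.

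Claim (8) is the main obstacle. The plan is: (i) show the degrees of original elements of $W'$ are preserved in $W$, since each edge of $W'$ refines to an edge-path meeting any vertex of $W'$ in a single terminal edge, and similarly for $1$-$2$ face incidences; (ii) show that any nontrivial refinement introduces a degree-$2$ element, as a new vertex interior to a subdivided $1$-cell meets exactly two edges of the subdivision, and a new edge interior to a refined $2$-cell lies between exactly two faces of the $2$-manifold refinement; (iii) conclude that any other new element has degree $\geq 2$, so $\delta(W)$ is the smaller of $\delta(W')$ and $2$. The substantive work is to extract these local degree statements rigorously from the stratified-complex partition axiom together with the requirement that each $W^{u'}$ have realization equal to the closure of $\psi(M_i^{u'})$; this case analysis is the only nontrivial piece of the lemma.
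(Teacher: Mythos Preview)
Your proposal is correct and takes the same approach as the paper---namely, unpacking the definitions---but in far greater detail: the paper's entire proof is the single sentence ``These all follow directly from the definitions.'' One small imprecision in your treatment of (8): the degree of a vertex $v'\in F_0(W')$ need not be literally preserved in $W$, since refining an adjacent $2$-cell $t'$ can introduce new edges of $W$ at $v'$ coming from the interior of $t'$; what survives is that edge degrees $f^2_e$ are preserved along refined edges and all new elements have degree~$\geq 2$, with at least one equal to~$2$, which is enough for $\delta(W)=\min\{\delta(W'),2\}$.
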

\begin{proof}
These all follow directly from the definitions.
\end{proof}

\begin{lemma} \label{gosto}
If $W$ is a $2$-dimensional stratified complex so that no
vertex link decomposes as a (nontrivial) wedge sum with a circle as one of the summands,
then there is a unique coarsening $C(W)$ of $W$ such that
$2\not\in\{f_i^u\}_{i\in\{1,2\},u\in F_{i-1}(W)}$ (so $\delta (C(W)) \not= 2$).
\end{lemma}

\begin{proof}
The construction of $C(W)$ follows.  Uniqueness is clear.

Set $M_{0,s}(W)=\{v\in M_0(W)||\{e\in F_1^v|f^2_e\not=2\}|=s\}$ 
(vertices in $s$ singular edges).

Set $M_0(C(W))=\bigcup_{s\not\in \{0,2\}}M_{0,s}(W)$ (vertices which are neither smooth nor part of a 1-dim stratum).

Set
$$M_1(C(W))=(\bigcup_{e\in F_1(W), f_2^e\not=2}M_1^e\bigcup M_{0,2}(W))\slash \sim$$
where $a\sim v$ if $a\in M_1(W)$, $v\in
M_{0,2}(W)$ and $\psi(a)=\psi(v)$ (singular edges glued at their ends if the singularities along the edges agree).

Set $$M_2(C(W))=M_2(W)\cup_{e\in F_1(W), f_2^e=2}M_1^e\cup
M_{0,0}(W)\slash \sim$$ where $a\sim b$ if $\psi(a)=\psi(b)$ and
either $a\in M_1(W)$ and $b\in M_0(W)$ or $a\in M_2^t$ and $b\in
M_1^e$ and there is an inclusion of $M_1^e$ into $M_2^t$ commuting
with $\psi$ (all 2-faces glued together at smooth points).

The map $\psi(C(W)):M(C(W))\rightarrow |C(W)|=|W|$ is then inherited
from $\psi$ for $W$.

It is now straightforward to check that each $M_i(C(W))$ is a
manifold with boundary, with interior points precisely the
equivalence classes of points in the interior of some $M_j(W)$.

The other properties are straightforward to check.
\end{proof}

\begin{definition}
A graph is {\bf $2$-connected} if it has at least three vertices and
is connected after deleting any vertex.
\end{definition}

\begin{lemma} \label{newlemma}
If $W$ is a web with all vertex links $2$-connected then
the hypothesis of Lemma \ref{gosto} holds.
\end{lemma}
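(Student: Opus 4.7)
The plan is to argue by contrapositive: assume some vertex link $L = \hbox{lk}_W(v)$ does decompose as a (nontrivial) wedge sum with a circle summand, and extract from this a cut vertex of $L$, directly contradicting the $2$-connectedness hypothesis. Since $W$ is $2$-dimensional, each such link is a $1$-dimensional stratified complex, i.e.\ a graph (possibly with loops or multi-edges), so it makes sense to speak of cut vertices and $2$-connectedness in the combinatorial sense used in the paper.

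Concretely, suppose $L\cong A\vee_p S^1$ with wedge point $p$ and with the decomposition nontrivial, meaning that $A$ contains at least one point other than $p$ (the summand $S^1$ is of course never a single point). I would first note that $p$ is necessarily a $0$-stratum of $L$, since it is the unique point lying in both summands and must have local branching in the stratified structure of $L$. Then removing $p$ from $L$ separates the nonempty set $A\setminus\{p\}$ from the nonempty set $S^1\setminus\{p\}$, so $p$ is a cut vertex of the underlying graph of $L$. But by hypothesis $L$ is $2$-connected, so $L$ has at least three vertices and remains connected after the deletion of any vertex, contradicting the existence of the cut vertex $p$. Hence no vertex link of $W$ admits such a decomposition, which is precisely the hypothesis of Lemma \ref{gosto}.

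The only mildly subtle point — and the one I would write out carefully — is the identification of the topological wedge point $p$ with a vertex of the graph $L$, so that the topological statement ``nontrivial wedge summand'' translates into the combinatorial statement ``$p$ is a cut vertex'' used in the definition of $2$-connectedness. Once that identification is in place, the proof is essentially a one-line observation and no further machinery is needed.
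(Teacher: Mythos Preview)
Your argument is correct and is essentially identical to the paper's own proof: both argue by contrapositive that a nontrivial wedge decomposition with a circle summand would produce a cut vertex in the link, contradicting $2$-connectedness. The only difference is that you flag the identification of the wedge point with a $0$-stratum as a point needing care, while the paper leaves this implicit.
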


\begin{proof}
The hypothesis of Lemma \ref{gosto} is that no vertex link of $W$ decomposes as a (nontrivial) wedge sum with a circle as one of the summands. If some vertex link $\mbox{lk}(v)$  did have such a decomposition, then by definition of wedge sum there would be a cut vertex in $\mbox{lk}(v)$, contradicting the assumption that $\mbox{lk}(v)$ is $2$-connected.
\end{proof}



Now we introduce a collapsing construction.
\begin{definition} \label{cambria} If $A \subseteq W$ is a subcomplex of a two
dimensional stratified complex $W$ then write $K=K_A(W)$ for the
maximal subcomplex of $W$ for which every edge $e\in F_1(K)$
either has $f_2^e(K)\geq 2$ or  $f_2^e(K)\geq 1$ and $e\in
F_1(A)$.
\end{definition}
This collapsing construction is useful in this section with
$A=\emptyset$ and  again in the next section with more general
$A$.

\begin{lemma}\label{transformers}
For any web $W$ and $A \subset W$ each connected component of $|W|$
has the homotopy type of a wedge of components of $|K_A(W)|$ and
circles. Also $\delta(K_\emptyset(W))\geq 2.$
\end{lemma}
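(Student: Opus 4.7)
The plan is to prove the homotopy equivalence by induction on the number of cells of $W$ not in $K_A(W)$, namely on $(f_1(W) - f_1(K_A(W))) + (f_2(W) - f_2(K_A(W)))$. If this quantity is zero then $W = K_A(W)$ and the statement holds trivially. Otherwise, since $K_A(W)$ is maximal, there exists an edge $e \in F_1(W) \setminus F_1(K_A(W))$ violating the defining condition, so either $f^2_e(W) = 0$, or $f^2_e(W) = 1$ and $e \notin A$. I would then split into these two cases, remove the violating cell(s), check that $K_A$ is unchanged, and apply the inductive hypothesis.

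In the free-edge case ($f^2_e(W) = 0$), set $W' = W \setminus \{e\}$; no $2$-face uses $e$, so this is a valid subweb and $K_A(W') = K_A(W)$. Topologically $|W|$ is obtained from $|W'|$ by attaching the $1$-cell $e$ at its endpoints (or as a free circle if $M_1^e$ is a circle), and a straightforward case analysis produces exactly one of the following: (a) wedging on a new $S^1$ to the component containing $e$ (same-component endpoints, or a circle edge), or (b) merging two components of $|W'|$ along a bridge that contracts onto their wedge (different-component endpoints). Combined with the inductive hypothesis applied to $|W'|$, this gives the desired decomposition.

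In the free-face case ($f^2_e(W) = 1$, $e \notin A$), there is a unique face $t$ with $e \subset \partial t$ and $f^e_t = 1$; set $W' = W \setminus \{e,t\}$, so again $K_A(W') = K_A(W)$. Topologically $|W|$ is the pushout of $|W'|$ with the compact $2$-manifold-with-boundary $t$ along the attaching subspace $\partial t \setminus e^\circ$. Since $\partial t \neq \emptyset$, the face $t$ is homotopy equivalent to a wedge of $1 - \chi(t)$ circles, and I would analyze the pushout via this wedge presentation together with the inclusion $\partial t \setminus e^\circ \hookrightarrow t$: when $t$ is a disk the attaching arc $\partial t \setminus e^\circ$ is contractible and we recover the classical elementary collapse with no new wedge summands, while in general the pushout is homotopy equivalent to $|W'|$ wedged with a number of circles accounting for the excess loops contributed by $t$ beyond what was already represented in $|W'|$ by $\partial t \setminus e^\circ$.

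The assertion $\delta(K_\emptyset(W)) \geq 2$ is then immediate: with $A = \emptyset$, every edge $e \in F_1(K_\emptyset(W))$ satisfies $f^2_e(K_\emptyset(W)) \geq 2$ by the very definition of $K_\emptyset(W)$, which is precisely the $i=2$ entry of the minimum defining $\delta$. I expect the main technical obstacle to be the careful analysis in the free-face step, uniformly verifying the claimed wedge-of-circles decomposition across all possible topological types of the $2$-face $t$ (disks, annuli, higher-genus orientable surfaces with boundary, and nonorientable surfaces with boundary) and all possible gluings of $\partial t \setminus e^\circ$ into $|W'|$; one may want to first refine $W$ so that every $2$-face is a triangle and appeal to standard simplicial collapse, then transport the result back through the refinement via Lemma \ref{tahini}.
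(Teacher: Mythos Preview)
Your approach is essentially the same as the paper's, which gives only a two-sentence sketch naming the same two elementary moves (free-edge deletions and free-face collapses) and asserting the second claim follows ``straight from the definition.''

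The one difference worth noting is in how the free-face move is phrased. Rather than removing $e$ and $t$ outright and then analyzing the pushout $|W'|\cup_{\partial t\setminus e^\circ} t$, the paper deformation retracts $t$ onto a one-dimensional spine $G$ containing $\partial t\setminus e^\circ$; this is what is meant by ``collapses of cells to wedges of circles \ldots\ which induce homotopy equivalences.'' Since any compact surface with nonempty boundary collapses, starting from the free edge $e$, onto such a spine, this step is a genuine homotopy equivalence for every topological type of $t$. The extra edges of $G\setminus(\partial t\setminus e^\circ)$ are then free and are removed by subsequent free-edge moves. This rephrasing absorbs precisely the non-disk case analysis you flag as the main technical obstacle, and is cleaner than either the direct pushout computation or the refinement-to-triangles detour you propose at the end.
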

\begin{proof} $K_A(W)$ is obtained from $W$ by a sequence
of collapses of cells to wedges of circles (removing an edge in
exactly one face) which induce homotopy equivalences and deletions
of edges contained in no faces.  The second statement follows
directly from the definition.
\end{proof}

The main geometric fact about admissible webs is the following.  
The proof of Lemma \ref{eric1} appears on page \pageref{eric1pf}. 
\begin{lemma} \label{eric1}
If $W$ is an admissible, connected and $2$-dimensional web with $\girth (W) \geq
3$
then $|W|$ has the
homotopy type of a wedge of circles, spheres and projective
planes.
\end{lemma}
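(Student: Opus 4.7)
The plan is a strong induction on the complexity $c(W) := f_0(W) + f_1(W) + f_2(W) + \mu(M_1(W))$. Admissibility and the girth hypothesis are preserved at each reduction because admissibility is inherited by subwebs (directly from the definition) and by refinements (Lemma \ref{tahini}(9)), while girth can only increase under passage to subwebs.

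The reductions come in three stages. First, apply $K := K_\emptyset(W)$; by Lemma \ref{transformers}, each component of $|W|$ is a wedge of components of $|K|$ with circles, and $\delta(K)\ge 2$, so it suffices to treat $K$. Second, if any vertex link $\mbox{lk}_K(v)$ is disconnected or has a cut vertex, then $|K|$ splits at $v$ as a wedge of two strictly smaller admissible subwebs, and the induction hypothesis handles each summand. Third, if all links are $2$-connected, Lemmas \ref{newlemma}, \ref{gosto}, and \ref{tahini} produce a refinement $K' = CK$ with $\delta(K')\ge 3$ that is admissible, has girth $\ge 3$, and satisfies $|K'|\cong|K|$. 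At this point, for any closed-surface face $t$ of $K'$, applying admissibility to the singleton subweb $\{t\}$ (where $L = 0$ since $\partial M_2^t = \emptyset$) gives $\chi(M_2^t) > 0$, so $M_2^t \in \{S^2,\mathbb{RP}^2\}$; such a face is a whole connected component of $|K'|$ (because interiors partition $|K'|$ and the boundary of $M_2^t$ is empty), contributing exactly the required wedge summand, and induction on the remainder completes the case.

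The main obstacle is the residual case in which $K'$ is nonempty, $\delta(K')\ge 3$, every face has nonempty boundary, and none of the reductions above is available---the three-disks-along-a-common-circle configuration is a cautionary example showing this case is nontrivial. Here I would aim to exhibit $|K'|$ as a wedge of $2$-spheres via a weighted Euler-characteristic count: combining $L(K') = \sum_e (2-f^2_e)\mu(M_1^e) \le -\mu(M_1(K'))$, the girth bound $\mu(\partial M_2^t)\ge 3$, and $\chi(|K'|) = f_0(K') - (f_1(K') - f_{1,c}(K')) + \sum_t \chi(M_2^t)$, the admissibility inequality $2\chi(K') + L(K') > 0$ should force every remaining $M_2^t$ to be a disk ($\chi = 1$) and force the $1$-skeleton of $K'$ to behave simply-connectedly after attaching these disks, from which a direct topological argument identifies $|K'|$ as a wedge of spheres. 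Pinning down this last combinatorial-to-topological step---ruling out configurations of disk faces that are admissible and dense yet not visibly wedge-decomposable---is the technical heart of the argument.
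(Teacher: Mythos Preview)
Your first three reductions---passing to $K_\emptyset(W)$, splitting along non-$2$-connected links, and applying the coarsening $CW$ to reach $\delta\ge 3$---match the paper's Lemma~\ref{eric2} essentially verbatim, and your disposal of closed-surface faces is fine. The gap is entirely in your ``residual case,'' and it is a real one.

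Your plan there is to apply the global inequality $2\chi(K')+L(K')>0$ once and read off that every $M_2^t$ is a disk and that $|K'|$ is a wedge of spheres. Neither conclusion follows. The global inequality does not force $\chi(M_2^t)=1$ for \emph{every} face: a M\"obius-band face ($\chi=0$) attached along a circle edge is perfectly consistent with admissibility and $\delta\ge 3$ (and indeed such configurations occur in the paper's analysis, yielding $\mathbb{RP}^2$ summands you have prematurely excluded). Even if all faces were disks, ``the $1$-skeleton behaves simply-connectedly after attaching these disks'' is exactly the statement you are trying to prove, and no combinatorial mechanism in your outline pins it down. The three-disks-on-a-circle example you cite is the \emph{easy} end of this case; admissible webs with $\delta\ge 3$ can have arbitrarily many faces, and you have no reduction move left to shrink them.

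The paper's proof supplies precisely the missing move. It first eliminates digon $2$-faces (Lemma~\ref{eric3}), then performs a weighted averaging of the admissibility inequality over vertices and circle-edges (Lemma~\ref{brewedawakenings}) to locate a single $u\in F_0\cup F_{1,c}$ at which the local contribution
\[
\sum_{t\in F_2}\left(f^t_u\,\frac{2\chi(M_2^t)}{\overline\mu(\partial M_2^t)}-1\right)>-2
\]
holds. With $\delta\ge 3$, girth $\ge 3$, and no digons, this forces a very constrained local picture: either two disk faces sharing their entire boundary circle (delete one), or a M\"obius band plus a disk on a common circle (already $\simeq S^2$), or two triangle faces sharing two edges at a vertex (delete one and fold the remaining two edges together). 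Each of these is a genuine \emph{further reduction} producing a strictly smaller admissible web, so the induction continues (Lemmas~\ref{makessense} and~\ref{check}). This localization-then-reduce step is the technical heart you correctly identified but did not supply; without it the argument does not close.
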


\begin{definition}
The following partial order on webs will be used for induction.  
For $(i,j), (i',j') \in \Z^2$ we say that
$(i,j)$ is before $(i',j')$ in the order if either $i>i'$ or $i=i'$ and $j<j'$
(so the order looks like $(2,0)\dots(1,5)(1,4)\dots(1,0)\dots(0,5)(0,4)\dots(0,0)$). For any web $W$
let
$$f_{i,j}(W)=\big|\{u\in F_i(W)|f_{i+1}^u=j\}\big|$$
(so $f_2(W)=f_{2,0}(W)$ and $f_1(W)=\sum_jf_{i,j}(W)$).
For two webs $W$ and $W'$ write $W<W'$ if
$(i,j)$ is the first pair (in the above order) such that
$f_{i,j}(W) \neq f_{i,j}(W')$ and $f_{i,j}(W) < f_{i,j}(W').$
\end{definition}

The proof of Lemma \ref{eric1} requires a few intermediate lemmas, and is by induction with respect to the partial order we just defined.  Whenever we refer to a minimal counterexample it is minimal with respect to this partial order.


\begin{lemma} \label{eric2}
If $W$ is a minimal counterexample to Lemma \ref{eric1}
then $\delta(W) \geq 3$ and all vertex links are $2$-connected.  
\end{lemma}

\begin{proof}
We begin by showing that $W=K_\emptyset(W)$. 
Note that $K_\emptyset(W)$ is a subweb of $W$ 
and hence admissible,
no larger than $W$ (with first difference occurring in $f_2=f_{2,0}$ 
or else $f_{1,0}$) and has $W$ homotopy equivalent to the wedge sum
of components of $K_\emptyset(W)$ and some circles. Thus
$W=K_\emptyset(W)$ by minimality and by Lemma \ref{transformers}
$\delta(W)\geq 2$.

Finally we show that vertex links in $W$ are $2$-connected and hence that 
$W=C(W)$.  
If $W$ has a vertex link which is not
$2$-connected then splitting a vertex into two along a cut point
of its link associated to an edge $e\in F_1(W)$ gives a complex $W'$
with smaller $f_{1,f^e_2}$ (without changing $f_2$ or any earlier 
$f_{1,j}$) which is still
admissible and homotopy equivalent to $W$. Admissibility of $W'$ follows by 
considering the quotient map $q:W'\rightarrow W$ and noting that 
if $W'$ were inadmissible and $K'\subseteq W'$ had $(2\chi+L)(K')\leq 0$ 
then $(2\chi+L)(q(K'))\leq 0$ so that $W$ would also be inadmissible.

Splitting a vertex $v$ into two between
connected components of its link gives a complex $W'$ with smaller $f_{0,f^v_1}$
(again without changing any earlier $f$) which is similarly still admissible and 
$W$ is homotopy equivalent to the
wedge of the connected components of $W'$ if $W'$ is disconnected and to the 
wedge of $W'$ and a circle otherwise.  

Thus by minimality all vertex links of $W$ are $2$-connected and by Lemma
\ref{newlemma} the hypotheses of Lemma \ref{gosto} are satisfied and $C(W)$ exists.  
Note that $C(W)$ is no larger than and homeomorphic to $W$ so that by minimality 
$W=C(W)$.  

Thus we have $2\leq\delta(K_\emptyset(W)=\delta(W)=\delta(C(W))\neq 2$. 
\end{proof}

\begin{definition}
A $2$-face $t\in F_2(W)$ with $M_2^t$ a disk is a {\bf digon} if $f_t^1=2$ 
and a {\bf monogon} if $f_t^1=1$ and $f_t^{1,c}=0$.  
\end{definition}

\begin{lemma} \label{eric3}
If $W$ is a minimal counterexample to Lemma \ref{eric1}
then $W$ has no digons or monogons.
\end{lemma}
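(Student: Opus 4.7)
The plan is to argue by contradiction: assume the minimal counterexample $W$ contains a digon $t$ with distinct boundary edges $e, f \in F_1(W)$, and exhibit a strictly smaller web $W'$ with $|W'|\simeq |W|$, $\girth(W')\geq 3$, and $W'$ admissible. By minimality of $W$, the conclusion of Lemma \ref{eric1} will then apply to $W'$, but since $|W'|\simeq|W|$ it will also apply to $|W|$, contradicting the counterexample assumption.

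To build $W'$, collapse $t$ onto $f$ using the deformation retract that drags $e$ across $t$ onto $f$. Concretely, set $F_2(W')=F_2(W)\setminus\{t\}$, $F_1(W')=F_1(W)\setminus\{e\}$, $F_0(W')=F_0(W)$; every other 2-face whose boundary in $W$ traversed $e$ now traverses $f$ instead in $W'$; and give the surviving edge $f$ the measure $\min(\mu(e),\mu(f))$. The resulting quotient map $|W|\to|W'|$ is a homotopy equivalence. Moreover, since $f_{2,0}(W')=f_2(W)-1<f_{2,0}(W)$ and $(2,0)$ is the first pair in the lexicographic order on $\Z^2$, the web $W'$ is strictly smaller than $W$ in the partial order.

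Verifying $\girth(W')\geq 3$ is straightforward: any isometric embedding of a circle into $\psi M_1(W')$ lifts to one into $\psi M_1(W)$ of equal length by substituting the shorter of $e,f$ back in for the collapsed edge, and any such circle in $W$ has length at least $\girth(W)\geq 3$. For admissibility, given any nonempty subweb $Z\subseteq W'$ one constructs a corresponding lift $\tilde Z\subseteq W$ by including each 2-face of $Z$ together with whichever of $e, f$ its boundary required in $W$, and adjoining $t$ together with both $e$ and $f$ whenever needed to re-link the pieces. A direct computation using Lemma \ref{tahini} then gives $(2\chi+L)(Z)\geq (2\chi+L)(\tilde Z)>0$ from admissibility of $W$.

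The main obstacle is the admissibility verification, in particular the subcase where the 2-faces of $Z$ use both $e$ and $f$ in their boundaries (so the lift $\tilde Z$ genuinely changes the topology, not merely the edge labeling). The key point is that $\mu(e)+\mu(f)\geq \girth(W)\geq 3$ provides slack in the length term $L$ which absorbs any loss incurred when the two edges are identified and the digon is removed; without this inequality the argument would fail, which is why the hypothesis $\girth(W)\geq 3$ of Lemma \ref{eric1} enters crucially at this point.
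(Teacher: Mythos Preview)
Your approach is essentially the paper's: both remove the digon by identifying its two boundary edges and passing to a strictly smaller web, contradicting minimality. The paper phrases the construction as choosing a homeomorphism $\tau:M_1^e\to M_1^f$, identifying, and collapsing folded intervals in $M_2$ (so that the remnant of $t$ becomes a closed-surface wedge summand, and $|W|$ is recovered from $|W'|$ as a wedge of all but one component together with some circles), whereas you collapse $t$ directly onto $f$; up to that summand the two $W'$ agree. Both proofs are equally terse on the admissibility verification and both tacitly treat $M_2^t$ as a disk.

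One point deserves correction. Your final paragraph singles out $\mu(e)+\mu(f)\geq\girth(W)\geq 3$ as the ``key point'' that makes the admissibility of $W'$ go through, but this inequality is not used there. With your choice $\mu'(f)=\min(\mu(e),\mu(f))$, the lift you describe already suffices: when $Z$ contains faces from both the $e$-side and the $f$-side, take $\tilde Z$ to include $e$, $f$, and $t$; then $\chi(\tilde Z)=\chi(Z)$ (un-collapsing the digon preserves Euler characteristic) and a two-line computation gives $L(\tilde Z)-L(Z)=(1-j_f)(\mu(f)-\mu(e))\leq 0$ when $\mu(e)\leq\mu(f)$ and $j_f\geq 1$ (symmetrically otherwise). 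In the one-sided cases one simply relabels $f$ by whichever of $e,f$ realizes the minimum and $(2\chi+L)$ is unchanged. So admissibility holds for the reason you sketch, but the girth bound is needed only where you already invoke it---to ensure $\girth(W')\geq 3$---and not to ``absorb loss'' in $L$.
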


\begin{proof}
Note that if $t\in F_2(W)$ then $f^1_t>0$.  Otherwise $M_2^t=|W|$ is a 
connected $2$-manifold with (by admissibility) positive Euler characteristic
Thus $W$ is a sphere or projective plane and not a counterexample.

If $t$ is a digon with $F_t^1=\{e,f\}$ and $\mu(e)\leq\mu(f)$
then construct $W'$ by choosing a
homeomorphism $\tau:M_1^e\rightarrow M_1^f$ compatible with $\psi$
on the boundaries and setting $M_0(W')=M_0(W)$,
$M_1(W')=M_1(W)\slash \{a\sim\tau(a)\}$ and
$M_2(W')=(M_2(W)-M_2^t)\slash\sim$ where any two points of any $\tau$-folded interval
in $\partial M_2(W)$ are equivalent under $\sim$. 
An interval $I\subseteq \partial M_2(W)$ is {\bf $\tau$-folded} if for every $p\in I$ 
there is $q\in I$ and $a\in M_1^e$ with $\{\psi(p),\psi(q)\}=\{\psi(a),\psi(\tau a)\}$.  
(This operation can shorten or eliminate the boundary of several of the $M_2^t$).  
If $t$ is a monogon with $F_t^1=\{f\}$ treat it as a digon with $\mu(e)=0$ 
so that $W'=W\slash t$ is obtained by contracting $F_2^t$ to a point.  

Note that the homotopy
type of $|W|$ is the wedge sum of the components of
$|W'|$ and some circles. Take $W''$ to be a connected component of
$W'$ which is not a wedge of circles, spheres and projective
planes. Clearly $f_2(W'')<f_2(W)$. 
The measure $\mu''$ is inherited from $\mu$ except on $(e\cup f)\slash\sim$, 
where it agrees with $\mu(e)$. $W''$ can now be seen to be admissible 
by noting that if not there is $K''\subseteq W''$ with $(2\chi +L)(K'')\leq 0$
and hence $(2\chi +L)(K)\leq 0$ if $K\subseteq W$ is the closure of the 2-faces 
of $W$ associated to those of $W''$ in $K''$ along with the face $t$ 
if its boundary ($\psi(\partial t)$) is in $K$.  

Thus by minimality, $W$ has no digons or monogons.
\end{proof}

\begin{definition}
Define $\overline{\mu}$ to be a measure with
$\overline{\mu}(M_1^e)=1$ if $e\in F_1(W)\setminus F_{1,c}(W)$ and
$\overline{\mu}(M_1^e)=3$ if $e\in F_{1,c}(W)$.
\end{definition}

\begin{lemma}
\label{brewedawakenings}
If $W$ is a minimal counterexample to Lemma \ref{eric1} then either
\begin{enumerate}
\item there is $u \in F_0(W)$ with
 $$\sum_{t\in F_2(W)}f^u_t\left({2\chi (M_2^t)\over
  \overline{\mu}(\partial M_2^t)}-1\right)>-2$$ or
 \item there is $u\in F_{1,c}(W)$ with
 $$\sum_{t\in F_2(W)}f^u_t\left({2\chi (M_2^t) \over
  \overline{\mu}(\partial M_2^t)}-1\right)>-2.$$
\end{enumerate}
\end{lemma}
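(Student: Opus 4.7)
The plan is to argue by contradiction: suppose that for every $u \in F_0(W) \cup F_{1,c}(W)$,
\[ Q(u) := \sum_{t:\, f^t_u > 0} \bigl(f^t_u \cdot c_t - 1\bigr) \leq -2, \qquad \text{where } c_t := \frac{2\chi(M_2^t)}{\overline{\mu}(\partial M_2^t)}, \]
and derive a contradiction with the admissibility of $W$ (applied to the subweb $W' = W$ itself).

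The main ingredient is the identity
\[ \overline{\mu}(\partial M_2^t) = \sum_{u \in F_0(W)} f^t_u + 3\sum_{u \in F_{1,c}(W)} f^t_u, \]
which one checks by decomposing each boundary circle of $M_2^t$ either into arcs between corners (each mapping one-to-one onto an interval edge and so contributing $1$ to $\overline{\mu}$) or into cornerless circles (each wrapping some $u \in F_{1,c}$ as a $d$-fold cover and contributing $3d$). Setting $\tilde Q(u) := \sum_t f^t_u (c_t - 1)$ and noting that $\sum_t f^t_u$ is at least the number of faces incident to $u$, we have $\tilde Q(u) \leq Q(u) \leq -2$. Weighting $F_0$ by $1$ and $F_{1,c}$ by $3$ and applying the identity, the weighted sum telescopes:
\[ \sum_{u \in F_0} \tilde Q(u) + 3\sum_{u \in F_{1,c}} \tilde Q(u) = \sum_t (c_t - 1)\overline{\mu}(\partial M_2^t) = \sum_t \bigl[2\chi(M_2^t) - \overline{\mu}(\partial M_2^t)\bigr]. \]
Introducing the auxiliary length $\overline{L}(W) := 2(f_1 + 2f_{1,c}) - \overline{\mu}(\partial M_2)$ and expanding $2\chi(W) = 2f_0 - 2f_1 + 2f_{1,c} + 2\sum_t \chi(M_2^t)$ stratum by stratum, one verifies
\[ 2\chi(W) + \overline{L}(W) = 2f_0 + 6f_{1,c} + \sum_t \bigl[2\chi(M_2^t) - \overline{\mu}(\partial M_2^t)\bigr]. \]
Combined with the contradiction hypothesis, the weighted sum is at most $-2(f_0 + 3f_{1,c})$, forcing $2\chi(W) + \overline{L}(W) \leq 0$.

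To close the argument I would compare $L$ with $\overline{L}$. Using $\mu(\partial M_2) = \sum_e f^e_2 \mu(M_1^e)$ (and analogously for $\overline{\mu}$), a direct computation gives
\[ L(W) - \overline{L}(W) = \sum_{e \in F_1(W)} \bigl(\mu(M_1^e) - \overline{\mu}(M_1^e)\bigr)\bigl(2 - f^e_2\bigr). \]
The hypothesis $\girth(W) \geq 3$ forces $\mu(M_1^e) \geq \overline{\mu}(M_1^e)$ on every edge (circle edges must have $\mu$-length $\geq 3 = \overline{\mu}$), while $\delta(W) \geq 3$ from Lemma \ref{eric2} forces $f^e_2 \geq 3$. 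Every summand is therefore $\leq 0$, so $L(W) \leq \overline{L}(W)$, and consequently $2\chi(W) + L(W) \leq 2\chi(W) + \overline{L}(W) \leq 0$, contradicting admissibility.

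The main technical obstacle is establishing the identity $\overline{\mu}(\partial M_2^t) = \sum_{F_0} f^t_u + 3\sum_{F_{1,c}} f^t_u$: it requires a careful classification of boundary components of $M_2^t$ into cornered circles (whose arcs cover interval edges exactly once) and cornerless circles (which must cover circle edges as honest coverings, since a circle cannot continuously cover an interval), plus the harmless observation from Lemma \ref{eric3}'s proof that in a minimal counterexample every face has nonempty boundary, so $c_t$ is well defined. Once this identity is in hand the remaining calculations are straightforward symbol pushing.
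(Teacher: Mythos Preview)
Your argument is correct and is essentially the contrapositive of the paper's proof: the paper starts from admissibility $0<(2\chi+L)(W)\le(2\chi+\overline L)(W)$ and unwinds this via the same identity $\overline\mu(\partial M_2^t)=\sum_{F_0}f^t_u+3\sum_{F_{1,c}}f^t_u$ to conclude that the weighted sum $\sum_{u\in F_0}[2+\tilde Q(u)]+3\sum_{u\in F_{1,c}}[2+\tilde Q(u)]$ is positive, hence some bracket is positive. Your passage from $Q(u)$ (sum over incident faces) to $\tilde Q(u)$ (sum weighted by $f^t_u$) is a harmless strengthening that in fact matches what the paper's chain of equalities actually produces and what the subsequent Lemmas \ref{makessense} and \ref{check} use; the edge-by-edge verification of $L\le\overline L$ is exactly the paper's, just written out more explicitly.
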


\begin{proof}
For every $t\in F_2(W)$,
\be  \label{tether} 1=\frac{f^t_0+3f^t_{1,c}}
    {\overline{\mu}(\partial M_2^t)}\ee
    and
    \be \chi(M)=\chi (M_0) - \chi (M_1) + \chi (M_2). \label{bark}
\ee
 Let
 \be \label{lbar}
 \overline L(N)= 2\overline \mu (M_1(N)) - \overline \mu(\partial M_2(N)).
 \ee
 Since $\girth (W)\geq
3$ we have $\mu\geq \overline{\mu}$.  Since $\delta (W)\geq 3$ and
$$L=\sum_{e\in F_1(W)}(2-f^2_e)\mu (M_1^e),$$ we have
\be L(W)\leq \overline{L}(W). \label{mark} \ee

 Finally we note that the definition of $\bar \mu$ gives us that
 \be \label{neil}
 (2\bar \mu-2\chi)(M_1(W))=6f_{1,c}(W)
 \ee
 and
 \be \label{koblitz}
 \bar \mu(\partial M_2(W))=\sum_{t \in F_2(W)} f_0^t +3f_{1,c}^t
 \ee

Note that
\begin{eqnarray}
0
 &<& (2\chi + L)(W)\nonumber\\
 & &\hspace{2.75in} \text{definition of admissibility} \nonumber \\
 &\leq& (2\chi + \overline{L})(W)\nonumber \\
 &&\hspace{4in} \text{(\ref{mark})}\nonumber\\
 &=& 2\chi (M_0(W)) + (2\overline{\mu}-2\chi)(M_1(W))
        +(- \overline{\mu}(\partial M_2(W)) + 2\chi(M_2(W))) \nonumber\\
 &&\hspace{3.5in} \text{(\ref{bark}) and (\ref{lbar}}) \nonumber \\
 &=& 2f_0(W) + 6f_{1,c}(W) - \sum_{t\in F_2(W)}(f_0^t+3f^t_{1,c}) + \sum_{t\in F_2(W)}\frac{f^t_0  + 3f^t_{1,c}}{\overline{\mu}(\partial M_2^t)}
        (2\chi)( M_2^t) \nonumber\\
        && \hspace{.6in}\text{term by term equalities using and (\ref{neil}), (\ref{koblitz}) and Lemma \ref{tahini} }\nonumber\\
 &=& \sum_{u\in F_0(W)}\left[2+\sum_{t\in F_2(W)}f^t_u\left({2\chi (M_2^t) \over
  \overline{\mu}(\partial M_2^t)}-1\right)\right]\label{perfil1}\\
 && + \sum_{u\in F_{1,c}(W)}3\left[2+\sum_{t\in F_2(W)}f^t_u\left({2\chi (M_2^t) \over
  \overline{\mu}(\partial M_2^t)}-1\right)\right]. \label{perfil2}\\
        && \hspace{3in} \text{from Definition \ref{shout} }\nonumber
  \end{eqnarray}

Since the sum is positive at least one of the summands in (\ref{perfil1}) or (\ref{perfil2})
must  be positive.   Rearranging the summands completes the proof.
\end{proof}

\begin{lemma} \label{makessense}
If there is $u \in F_{1,c}(W)$ with
 $$\sum_{t\in F_2(W)}f^t_u\left({2\chi (M_2^t)\over
  \overline{\mu}(\partial M_2^t)}-1\right)>-2$$
  then $W$ is not a minimal counterexample to Lemma \ref{eric1}.
\end{lemma}

\begin{proof}
Fix such a $u\in F_{1,c}(W)$ .
For a face $t\in F_2(W)$ to contribute more than $-f_u^t$ to the
sum in (\ref{perfil2}) we must have $\chi
(M_2^t)>0$ which implies that $M_2^t$ is a disk.

By Lemma \ref{eric2} there are at least $f_u^2 \geq \delta
(W)\geq 3$ (weighted) terms, including either two embedded disks
$t,t'\in F^2_u$ with $\partial M_2^t=\partial M_2^{t'} = \psi
(M_1^u)$ or the entire complex is the union of a projective plane
with a disk along an embedded circle.  In the former case deleting
$t$ gives $W'$ which is clearly a smaller admissible
counterexample, contradicting minimality. In the latter case the
entire complex has the homotopy type of a sphere and thus it is
not a counterexample.
\end{proof}

\begin{lemma} \label{check}
If there is $u\in F_0(W)$ with
 $$\sum_{t\in F_2(W)}f^t_u\left({2\chi (M_2^t) \over
  \overline{\mu}(\partial M_2^t)}-1\right)>-2.$$
  then $W$ is not a minimal counterexample to Lemma \ref{eric1}.
\end{lemma}

\begin{proof}
Fix such a $u\in F_{0}(W)$. 
For a face $t\in F_2(W)$ to contribute more than $-f_u^t$ to the
sum in (\ref{perfil1}) we must have $\chi
(M_2^t)>0$ which implies that $M_2^t$ is a disk.

Then there are at least $f_u^2 \geq \frac92$
(weighted) terms, including two embedded disks $t,t' \in F^2_u$
with $\mu (\partial t) = \mu (\partial t') = 3$ and $\mu (\partial
t \cap \partial t')\in \{2,3\}$. One sees this by explicitly
enumerating all ways to get a positive term with at least $3$
vertices and $5$ edges in the link of $u$. It turns out that the
link must be a triangle with two edges doubled and at least $4$ of
the edges must come from triangles and hence all $5$ must be
embedded. Let $t$ and $t'$ be two triangles forming a double edge
in the link of $u$. If $\partial t = \partial t'$, deleting $t$
gives a smaller counterexample contradicting minimality as above.

If $\partial t \not= \partial t'$ then a web $W'$ with the same
homotopy type as $W$ and one fewer two face exists.  $W'$ is obtained by deleting $t$
and identifying the two edges in $(\partial t \cup
\partial t') \setminus(\partial t \cap \partial t')$.
It remains to check that $W'$ is admissible, contradicting the
minimality of $W$ and completing the proof. Checking admissibility
is straightforward.
\end{proof}

\begin{pfoflem}{\ref{eric1}}\label{eric1pf}
Assume that $W$ is a minimal counterexample.  Lemmas \ref{brewedawakenings}, \ref{makessense} and \ref{check} form a contradiction.
\end{pfoflem}
\vspace{.2in}

\begin{pfoflem}{\ref{topology}} \label{proof_topology}
The first claim follows from Lemma \ref{eric1} applied to $W(X)$. 

To prove the second claim choose $j:Z\rightarrow X$
to be a minimal subcomplex such that $\pi_1(j)$ is an isomorphism
and $f_1(Z)=f_1(X)$. If $Z\simeq S^2\vee Z'$ then choose
a simplicial map $f:S\rightarrow Z$ with $|S|\cong S^2$ and
$H_2(f;\Z/2\Z)\not=0$ and a 2-face $t$ of $Z$ with $|f^{-1}(t)|$
odd. Fix a presentation of
$$\pi_1(Z\setminus t)=<a_1, \ldots a_s, b_1, \ldots b_t|b_i^2=1\ \forall i =1,\dots,t>$$ and
express some element of $\pi_1(Z\setminus t)$ represented by the
boundary of $t$ as a cyclically reduced word $[\partial
t]=w_1w_2\ldots w_v$.   The restriction $f|_{S\setminus
f^{-1}(t)}$ shows that $[\partial t]^r=1$ for some odd $r$ so
$w_1=w_v^{-1}$ and hence $v\leq 1$ and $[\partial t]=1$. Note that
$$\pi_1(i):\pi_1(Z\setminus
t)\rightarrow \pi_1(Z\setminus t)/<[\partial t]>=\pi_1(Z)$$ is the
quotient map (where $<\ldots>$ is the normal closure) and an
isomorphism, contradicting the minimality of $Z$.

\end{pfoflem}

\section{Isoperimetric inequalities} \label{oilspill}
 Classifying the homotopy type of admissible complexes
$X$ is a major step towards establishing a linear isoperimetric
inequality for $Y$. However we also need a bound on the number  of
faces in the spheres and projective planes. (A family of spheres
with an increasing number of vertices need not satisfy any one
linear isoperimetric inequality.)

To get this bound we now recall the function $L$ (previously
defined for webs) which generalizes the length of the boundary of
a disk.
$$L(X)=2f_1(X)-3f_2(X)=\sum_{e \in F_1(X)}(2-f^2_e).$$

\begin{lemma}\label{popped}
If $X$ is a $w$-admissible $2$-complex then
$$f_2(X) \leq {2\chi (X)-2w+L(X) \over 2e_w(X)-1}.$$
\end{lemma}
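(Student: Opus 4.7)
The plan is a direct computation combining the definition of $e_w$, Euler's formula, and the definition of $L$. There is no hidden topological content here; the only place admissibility enters is to ensure the denominator $2e_w(X)-1$ is positive so that we can divide by it.

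First I would apply the definition of $e_w(X)$ with the choice $Z=X$ itself (which is allowed since $[w]\subseteq F_0(X)$), yielding
\[
f_0(X)-w \;\geq\; e_w(X)\,f_2(X).
\]
Multiplying by $2$, this becomes $2f_0(X)-2w \geq 2e_w(X)\,f_2(X)$.

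Next I would derive an expression for $2f_0(X)$ from $\chi$ and $L$. Since $\chi(X)=f_0(X)-f_1(X)+f_2(X)$ and $L(X)=2f_1(X)-3f_2(X)$, eliminating $f_1(X)$ gives
\[
2\chi(X) \;=\; 2f_0(X) - 2f_1(X) + 2f_2(X) \;=\; 2f_0(X) - L(X) - f_2(X),
\]
so that $2f_0(X) = 2\chi(X) + L(X) + f_2(X)$.

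Substituting this into the previous inequality gives
\[
2\chi(X) + L(X) + f_2(X) - 2w \;\geq\; 2e_w(X)\,f_2(X),
\]
which rearranges to
\[
\bigl(2e_w(X)-1\bigr)\,f_2(X) \;\leq\; 2\chi(X) - 2w + L(X).
\]
Finally, since $X$ is $(\epsilon,w)$-admissible we have $e_w(X)\geq \tfrac{1}{2}+\epsilon$, so $2e_w(X)-1 \geq 2\epsilon > 0$, and dividing through yields the claimed bound
\[
f_2(X) \;\leq\; \frac{2\chi(X)-2w+L(X)}{2e_w(X)-1}.
\]
The only step requiring any thought is the one-line algebraic identity $2f_0(X) = 2\chi(X)+L(X)+f_2(X)$; everything else is definitional, and there is no real obstacle.
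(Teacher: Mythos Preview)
Your proof is correct and is essentially the same as the paper's: the paper packages the three relations $\chi=f_0-f_1+f_2$, $f_0-w\ge e_w(X)f_2$, and $L=2f_1-3f_2$ into a $3\times 3$ matrix inequality and left-multiplies by the row vector $[2,2,1]$, which is exactly your elimination of $f_0$ and $f_1$ carried out in one line. The final division by $2e_w(X)-1>0$ using $(\epsilon,w)$-admissibility is identical.
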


\begin{proof}
By the definitions of $\chi$, $e_w$ and $L$ we get
$$\left[\begin{array}{rrr}1&-1&1\\-1&0& e_w(X)\\
0&2&-3\end{array}\right]\left[\begin{array}{c}f_0(X)\\
    f_1(X)\\f_2(X)\end{array}\right]
    \leq\left[\begin{array}{c}\chi (X)\\-w\\L(X)\end{array}\right].$$
Multiplying both sides on the left by $[2,2,1]$ gives the desired
result.
\end{proof}

\begin{lemma}\label{below}
For every $\epsilon>{1\over 2}$ there is $\beta>1$ with the following property.
If $X$ is an $\epsilon$-admissible connected $2$-complex with $L(X)\leq 0$
and $\chi (X)\leq 1$ then any contractible loop $\gamma:C_{r}\to X$
satisfies $$A(\gamma)<\beta L(\gamma).$$
\end{lemma}

\begin{proof}
As $L(X)\leq 0$ and $\chi(X) \leq 1$,  Lemma \ref{popped} implies
that $f_2(X)\leq {2\over 2\epsilon-1}$. Since $L(X) \leq 0$ we also have $f_1(X)={1\over 2}L(X)+{3\over 2}f_2(X)\leq{3\over 2\epsilon-1}$, and since $X$ is connected $f_0(X)\leq f_1(X)+1$.  So for any fixed $\epsilon$ there are only a finite number of
$X$ that satisfy the hypothesis of the lemma. By Lemma \ref{topology} we have that $\pi_1(X)$ is a free product of $\Z$ and $\Z /2\Z$ terms.  
A free product of hyperbolic groups is hyperbolic, so $\pi_1(X)$ is hyperbolic. (Here we mean ``word hyperbolic'' in the sense of Gromov \cite{gromov1}). Hence we have a linear isoperimetric inequality on $\pi_1(X)$ with respect to any finite presentation of the group.

Let $T$ denote any spanning tree of $X$, and let $X/T$ denote the quotient of $X$ with all the points of $T$ identified to a single point. We can endow $X/T$ with the structure of a
CW-complex with one vertex, and $X/T$ is easily seen to be a {\it presentation complex} for $\pi_1(X)$.

Let $\pi: X \to X/T$ be the quotient map. By assumption, $\gamma (C_r)$ is null homotopic in $X$ so $b(C_r)=\pi \gamma (C_r)$ is a trivial word in $\pi_1(X)$.  Hence there is some constant $\beta_X$ independent of $\gamma$ such that 
$$A(\gamma) < \beta_X  L(b (C_r)).$$
We also have that $$L (b (C_r)) \le L(\gamma)$$ 
since some edges may get contracted in the quotient, but the cycle can not get longer.

Thus there is a $\beta_X$ such that
$$A(\gamma)<\beta_X r.$$ for all null homotopic curves $\gamma$ in $X$.
As there are only finitely many such $X$ we can set
$\beta = \max_{X} \beta_X$, and we have that for all $\gamma$ and $X$
$$A(\gamma)<\beta r,$$
as desired.
\end{proof}

\subsection{Proofs of Lemmas \ref{huji}
and \ref{hiji2}}

\begin{definition}
A simplicial map $f:X \rightarrow Y$ is a {\bf $d$-immersion}
 if the restriction to the closed star
of any $d$-face is an embedding.
\end{definition}

\begin{lemma}\label{embedding}
If $(C_r\xrightarrow{b}D\xrightarrow{\pi}X)$ is a minimal filling and $\gamma=\pi b$ is a $0$-immersion
then $\pi$ is a $1$-immersion.
\end{lemma}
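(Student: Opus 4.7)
I would argue by contradiction. Suppose $\pi$ is not a $1$-immersion, so $\pi|_{\Star(e)}$ fails to be an embedding for some $e \in F_1(D)$. The goal is to construct a filling of $\gamma$ with strictly fewer $2$-faces than $D$, contradicting minimality.

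First I would pin down what form the failure must take. Since (the underlying space of) $D$ is a disk, an interior edge $e$ has exactly two adjacent $2$-faces $t_1 = \{a,b,v_1\}$ and $t_2 = \{a,b,v_2\}$; simpliciality then forces the non-embedding on $\Star(e)$ to be caused by $\pi(v_1)=\pi(v_2)$, which in turn forces $\pi(t_1)=\pi(t_2)$ together with $\pi(\{a,v_1\}) = \pi(\{a,v_2\})$ and $\pi(\{b,v_1\}) = \pi(\{b,v_2\})$. The remaining degenerate alternatives --- where $\pi$ collapses a single triangle to a lower-dimensional simplex, or where $e$ lies on $\partial D$ --- are handled by analogous, smaller reductions (directly excising or identifying the offending cells). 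The $0$-immersion hypothesis on $\gamma$ enters precisely here: any boundary collapse by $\pi$ would force $\gamma = \pi \circ b$ either to collapse an edge of $C_r$ to a point or to backtrack at some vertex of $C_r$, contradicting the $0$-immersion assumption.

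In the main case I would perform the standard van Kampen reduction. Remove the open cells $t_1^\circ, t_2^\circ, e^\circ$ from $D$, and pass to the quotient $D'$ in which $\{a,v_1\} \sim \{a,v_2\}$, $\{b,v_1\} \sim \{b,v_2\}$, and $v_1 \sim v_2$. Because $\pi$ already identifies each of these simplex pairs, $\pi$ descends to a well-defined simplicial map $\pi' : D' \to X$; similarly $b$ descends to $b' = q \circ b$, where $q : D \to D'$ is the quotient map. By construction $f_2(D') = f_2(D) - 2$.

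The main obstacle will be verifying that $(C_r \xrightarrow{b'} D' \xrightarrow{\pi'} X)$ is still a filling, i.e.\ that $\text{Cyl}(b')$ is again a disk. The rhombus $R = t_1 \cup t_2$ is a sub-disk of $D$ bounded by the $4$-cycle $a\,v_1\,b\,v_2$; the reduction removes $R^\circ \cup e^\circ$ and zips this $4$-cycle into a $2$-path $a\text{-}v\text{-}b$. To see $D'$ remains a topological disk, I would check locally that the identified vertex $v$ has a circle link in $D'$ --- namely the concatenation of the two arcs $\mbox{lk}_{D\setminus R}(v_1)$ and $\mbox{lk}_{D\setminus R}(v_2)$ (each running from $a$ to $b$), glued at their common endpoints --- so that $v$ is a manifold interior point. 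A direct count gives $\Delta V = -1$, $\Delta E = -3$, $\Delta F = -2$, hence $\chi(D') = \chi(D) = 1$; together with the fact that $\partial D' = \partial D$ is still a circle, this forces $D'$ to be a disk. When $R$ meets $\partial D$ the same strategy applies case by case, with the $0$-immersion hypothesis on $\gamma$ again blocking the pathological configurations of $b(C_r)$ relative to $\partial R$ that would otherwise obstruct the identification. Since $f_2(D') < f_2(D)$, this contradicts the minimality of $D$.
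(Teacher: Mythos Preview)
Your argument follows the same plan as the paper's: locate two $2$-faces of $D$ sharing an edge whose opposite vertices are identified by $\pi$, cut and fold to obtain a strictly smaller filling, and contradict minimality. The paper packages the cut slightly differently---it takes the $4$-cycle $\delta$ through $u,x,v,y$ (your $v_1,a,v_2,b$) and excises the entire region $\langle\delta C_4\rangle_{\mathrm{Im}(b)}$ it bounds, which may contain more than just the two triangles, then collapses $u\sim v$---but the mechanism is the same van~Kampen reduction, and your more explicit link/Euler-characteristic discussion fills in details the paper leaves to the reader.

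One point deserves correction. You assume throughout that $|D|$ is a disk (``an interior edge $e$ has exactly two adjacent $2$-faces''; the check that $\chi(D')=1$ and $\partial D'$ is a circle). The paper's definition of filling only asks that the mapping cylinder $\mathrm{Cyl}(b)$ be a disk; $D$ itself need not be one (Figure~\ref{fig:fill} already exhibits a minimal filling whose $D$ is not a disk). Consequently an edge of $D$ may sit in more than two $2$-faces, and the object whose disk-ness you must preserve under the reduction is $\mathrm{Cyl}(b')$, not $D'$. Your reduction survives once rephrased in those terms---pick any two of the triangles on $e$ whose apices are $\pi$-identified, fold, and observe that $\mathrm{Cyl}(b')$ is still a disk with $f_2$ dropped---but the verification as written is aimed at the wrong target. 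The paper's choice of $x,y$ at the outer boundary of $B(1,\{u,v\})$ and removal of the full bounded region is partly what lets it avoid this case analysis.
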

\begin{proof}
Assume not and choose vertices $u\not=v$ of $D$ with $\pi(u)=\pi(v)$ and
$B(1,u)\cap B(1,v)$ containing at least one edge.
Take $x$ and $y$ to be the two outer points of $B(1,u)\cap B(1,v)$ 
(that is the two points in either $\hbox{Im}(b)$ or else the closure of a component 
of $D-B(1,\{u\})-B(1,\{v\})$ intersecting $\hbox{Im}(b)$).  
Consider the length 4 loop $\delta:C_4\rightarrow D$ with vertices $v,x,u$ and $y$ 
and interior open disk $D''$ a component of $D-\hbox{Im}(\delta)$.  
Construct a smaller filling
$$(C_r \xrightarrow{b} D' \xrightarrow{\pi|_{D'}},X)$$
of $\gamma$ where
$$D'=(D- D'')\slash (u\sim v)$$
is the quotient simplicial complex.  
\end{proof}

Let $(C_r\xrightarrow{b}D\xrightarrow{\pi}X)$ be a minimal
filling.  To show for \ref{huji} that $f_2(D)\leq\rho r$ we will
define a subcomplex $D_{L\leq 0} \subset D$.
Then we break the proof up into two cases.  Lemma \ref{berkeleyone} will cover the case that
$f_2(D_{L \leq 0})/f_2(D)$ is close to 1. Lemma \ref{berkeleytwo} will cover the case that
$f_2(D_{L \leq 0})/f_2(D)$ is close to 0.

\begin{definition} \label{tarmac}
If $D\xrightarrow{\pi}X$ is a map of simplicial 2-complexes, define the pure 2-complexes
$X^\pi_i\subseteq X^\pi_{i-1}\subseteq\ldots \subseteq X^\pi_0\subseteq X$ with $$F_2(X^\pi_i)=\{z \in F_2(X)\|\pi^{-1}(z)|\geq i\}.$$ 
For each $i$ enumerate the connected components of
$X^\pi_i$ by $\{X^\pi_{i,j}\}_j$. Let $Q^\pi$ be the union of the index sets
of the connected components of the $X^\pi_{i}$. Define
$$Q^\pi_{L\leq 0}=\{(i,j)\in Q^\pi:\ L(X^\pi_{i,j})\leq 0 \}$$
and
$$\bar Q^\pi_{L> 0}=\{(i,j)\in Q^\pi:\ \forall (i',j') \text{ with $X^\pi_{i,j} \subset X^\pi_{i',j'}$ we have $L(X^\pi_{i',j'})> 0$} \}$$
Then define
$$X^\pi_{L\leq 0}=\bigcup_{(i,j)\in Q^\pi_{L \leq 0}}X^\pi_{i,j}\subseteq X$$
and a pure subcomplex
 $D_{L^\pi\leq 0}\subseteq D$
with 
 $$F_2(D^\pi_{L\leq 0})=\{d \in F_2(D) \mid \pi(d) \in X^\pi_{L\leq 0}\}.$$
\end{definition}

\begin{lemma} \label{boundary}
Any minimal filling $(C_r\xrightarrow{b}D\xrightarrow{\pi}X)$ satisfies
$$r\geq \sum_{(i,j) \in \bar Q^\pi_{L>0}} L(X^\pi_{i,j}).$$
\end{lemma}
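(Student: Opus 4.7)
The plan is a double count over edges of $Z$, expressing both $r$ and the right-hand side as sums indexed by $e\in F_1(Z)$ and verifying nonnegativity of the difference term-by-term.

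First I would apply Lemma~\ref{embedding} to conclude that $\pi$ is a $1$-immersion. For $e\in F_1(Z)$ and $t\in F_2(Z)$, introduce $k(e)=|\pi^{-1}(e)\cap F_1(D)|$, $n(t)=|\pi^{-1}(t)\cap F_2(D)|$, $m(e)=\sum_{t\supset e}n(t)$, and $M(e)=\max_{t\supset e}n(t)$. The $1$-immersion property forces $k(e)\geq M(e)$: the $n(t)$ preimages of any fixed $t\supset e$ each meet a distinct preimage of $e$, because $\pi$ is injective on each edge star. A direct expansion of $r=L(D)=\sum_{\tilde e\in F_1(D)}(2-f^2_{\tilde e}(D))$, grouped by image, yields $r=\sum_e(2k(e)-m(e))$.

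For the right-hand side I would use that, for every $e$ and every $i\leq M(e)$, the $2$-faces of $Z_i$ containing $e$ all lie in the same component, which we denote $(i,j(i,e))$; then swapping sums,
\[
\sum_{(i,j)\in\bar Q_{L>0}}L(Z_{i,j})=\sum_e\sum_{i\in I(e)}\bigl(2-f^2_e(Z_i)\bigr),\qquad I(e):=\{i:(i,j(i,e))\in\bar Q_{L>0}\}.
\]
The key structural observation is that $I(e)$ is an initial segment $\{1,\ldots,i^*(e)\}$: since $Z_{i,j(i,e)}\subset Z_{i-1,j(i-1,e)}$, every container of the latter is automatically a container of the former, so the $L>0$ hypothesis defining $\bar Q_{L>0}$ propagates from level $i$ down to level $i-1$. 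Using $\sum_{i=1}^{i^*(e)}f^2_e(Z_i)=\sum_{t\supset e}\min(n(t),i^*(e))$, the difference rewrites as
\[
r-\sum_{(i,j)\in\bar Q_{L>0}}L(Z_{i,j})=\sum_e\Bigl[2(k(e)-i^*(e))-\sum_{t\supset e}\max(0,n(t)-i^*(e))\Bigr].
\]

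I would finish with a short case analysis on $|T(e)|:=|\{t\supset e:n(t)>i^*(e)\}|$. If $|T(e)|=0$, then $i^*(e)=M(e)\leq k(e)$ and the $e$-summand is $2(k(e)-M(e))\geq 0$. If $|T(e)|=1$, then $k(e)\geq M(e)$ gives summand $\geq M(e)-i^*(e)\geq 0$. If $|T(e)|\geq 2$, the bound $\sum_{t\in T(e)}n(t)\leq m(e)$ gives summand $\geq (2k(e)-m(e))+(|T(e)|-2)i^*(e)\geq 0$. The main subtlety is the downward-closedness of $I(e)$: the definition of $\bar Q_{L>0}$ allows containers $Z_{i'',j''}$ at levels $i''>i$, not merely chain ancestors, and one must check this does not obstruct the propagation, which reduces to the fact that the family of containers only shrinks as one passes from $Z_{i,j(i,e)}$ to the larger component $Z_{i-1,j(i-1,e)}$.
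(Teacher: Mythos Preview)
Your proof is correct and follows essentially the same approach as the paper: both arguments express $r$ and the right-hand side as sums over edges $e\in F_1(Z)$ and verify the inequality termwise, using the $1$-immersion property from Lemma~\ref{embedding} together with the downward-closure of the levels lying in $\bar Q_{L>0}$. The paper packages the edge contributions via the quantities $|e|_\infty^{Q'}$ and $|e|_1^{Q'}$ and splits into cases according to whether some $(i,j)\notin\bar Q_{L>0}$ has two faces through $e$, whereas you use $k(e),m(e),M(e),i^*(e)$ and split on $|T(e)|$; these are the same computation in different notation, and your version makes the initial-segment property of $I(e)$ explicit where the paper invokes it implicitly.
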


Note that the lemma also holds with $\bar Q^\pi_{L>0}$ replaced by any order ideal in $Q^\pi$.

\begin{proof}
For every edge $e\in F_1(X)$ and $Q' \subset Q^\pi$ define
$$ |e|_{\infty}^{Q'}
    =\underset{f \in F_2(X):e \in \partial f}{\hbox{max}}{}\bigg| \big\{ (i',j') \in Q':\ f \in F_2(X^\pi_{i',j'})\big\}\bigg| $$
and
$$ |e|_{1}^{Q'}
    =\underset{f \in F_2(X):e \in \partial f}
    {\sum}\bigg|\big\{ (i',j') \in Q':\ f \in F_2(X^\pi_{i',j'})\big\}\bigg|. $$

By Lemma \ref{embedding} the filling $(C\xrightarrow{b}D\xrightarrow{\pi}X)$ is a 1-immersion.  Thus for any $e\in F_1(X)$
$$ f_1(\pi^{-1}e \cap \partial D)\geq\max\{0,\left(2|e|_{\infty}^{Q^\pi} - |e|_{1}^{Q^\pi}\right)\}.$$

If $e\in\partial g\cap\partial h$ with $\{g,h\}\subseteq F_2(X^\pi_{i,j})$, $g\not=h$ and $(i,j)\not\in \bar Q^\pi_{L>0}$ then 
the maximum in the definition of $|e|^{\bar Q^\pi_{L>0}}_\infty$ is achieved by both $g$ and $h$ and $2|e|_\infty^{\bar Q^\pi_{L>0}}-|e|^{\bar Q^\pi_{L>0}}_1 \leq 0$.  

It follows that for any $e\in F_1(X)$ there is
$$\max\{0,\left(2|e|_{\infty}^{Q^\pi} - |e|_{1}^{Q^\pi}\right)\}\geq \max\{0,\left(2|e|_{\infty}^{\bar Q^\pi_{L>0}} - |e|_{1}^{\bar Q^\pi_{L>0}}\right)\}.$$

Putting this together we get
\begin{eqnarray*}
 r &=& \sum_{e\in F_1(X)} f_1(\pi^{-1}e \cap \partial D)\\
          &\geq& \sum_{e\in F_1(X)} \max\{0,\left(2|e|_{\infty}^{Q^\pi} - |e|_{1}^{Q^\pi}\right)\}\\
          &\geq& \sum_{e\in F_1(X)} \max\{0,\left(2|e|_{\infty}^{\bar Q^\pi_{L>0}} - |e|_{1}^{\bar Q^\pi_{L>0}}\right)\}\\
          &\geq& \sum_{e\in F_1(X)} \left(2|e|_{\infty}^{\bar Q^\pi_{L>0}} - |e|_{1}^{\bar Q^\pi_{L>0}}\right)\\
          &=& \sum_{e\in F_1(X)}\left(\sum_{(i,j)\in \bar Q^\pi_{L>0}:\ e \in F_1(X^\pi_{i,j})} \left(2 - f^2_e(X^\pi_{i,j})\right)\right)\\
          &=& \sum_{(i,j)\in \bar Q^\pi_{L>0}}\left(\sum_{e\in F_1(X^\pi_{i,j})} \left(2 - f^2_e(X^\pi_{i,j})\right)\right)\\
          &=& \sum_{(i,j)\in \bar Q^\pi_{L>0}}L(X^\pi_{i,j}) .
\end{eqnarray*}

\end{proof}

\begin{definition} \label{jfk}
If $(C_r\xrightarrow{b}D\xrightarrow{\pi}X)$ is a minimal filling 
define (using the notation above and from Definition \ref{cambria})
$$X^{\pi\infty}_{L\leq 0}=K_{X^\pi_{L\leq 0}}(X)\subseteq X.$$

Similarly take $D^{\pi\infty}_{L\leq 0}$ to be the maximal pure two dimensional subcomplex of $\pi^{-1}(X^{\pi\infty}_{L\leq 0})\subset D.$
\end{definition}
 
\begin{lemma} \label{victrola} For any minimal filling $(C\xrightarrow{b}D\xrightarrow{\pi}X)$

$$L(X^\pi_{L\leq 0})\leq 0\text{ and }L(X_{L\leq 0}^{\pi\infty})\leq 0.$$
\end{lemma}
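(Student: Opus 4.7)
The plan is to prove the two inequalities separately, both via an edge-by-edge analysis of $L$ combined with the forest structure of the components $Z_{i,j}$.

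For the first inequality $L(Z_{L\leq 0})\leq 0$, I would begin by recording that the family $\{Z_{i,j}\}_{(i,j)\in Q}$ is a forest under inclusion. Since $Z_{i'}\subseteq Z_i$ whenever $i'\geq i$, each component $Z_{i',j'}$ sits inside a unique component of $Z_i$, so any two of the $Z_{i,j}$ are either nested (at different levels) or share no simplex at all (as they lie in distinct connected components of the same $Z_i$). Let $\{Z_{i_k,j_k}\}_k$ be the maximal members of $Q_{L\leq 0}$ under this forest order. They are pairwise simplex-disjoint, and every $(i,j)\in Q_{L\leq 0}$ sits under exactly one such maximal member (the top of its ancestor chain restricted to $Q_{L\leq 0}$). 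Hence $Z_{L\leq 0}=\bigsqcup_k Z_{i_k,j_k}$ as a simplex-disjoint union. Because $L(X)=2f_1(X)-3f_2(X)$ is additive on disjoint unions of pure $2$-complexes, this yields $L(Z_{L\leq 0})=\sum_k L(Z_{i_k,j_k})\leq 0$, each summand being $\leq 0$ by membership in $Q_{L\leq 0}$.

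For the second inequality $L(Z_{L\leq 0}^{\infty})\leq 0$, I would use the inclusion $Z_{L\leq 0}\subseteq Z_{L\leq 0}^{\infty}$ (immediate, since $Z_{L\leq 0}$ itself satisfies the defining condition of $K_{Z_{L\leq 0}}(Z)$) and compare the sums $\sum_e (2-f_e^2)$ edge by edge. I would partition $F_1(Z_{L\leq 0}^{\infty})$ into \emph{old} edges in $F_1(Z_{L\leq 0})$ and \emph{new} edges in $F_1(Z_{L\leq 0}^{\infty})\setminus F_1(Z_{L\leq 0})$. For old edges, $f_e^2$ can only grow on passing to the larger complex, so each contributes at most $2-f_e^2(Z_{L\leq 0})$ to $L(Z_{L\leq 0}^{\infty})$. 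For new edges, the defining property of $K_A$ forces $f_e^2(Z_{L\leq 0}^{\infty})\geq 2$, so each contributes $\leq 0$. Summing these bounds gives $L(Z_{L\leq 0}^{\infty})\leq L(Z_{L\leq 0})\leq 0$.

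The only real work lies in justifying the simplex-disjointness of the maximal components of $Q_{L\leq 0}$ and recording additivity of $L$ over disjoint unions; everything else is a routine unwinding of Definitions \ref{tarmac}, \ref{cambria} and \ref{jfk}.
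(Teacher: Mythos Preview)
Your proposal is correct and follows essentially the same argument as the paper: for the first inequality both you and the paper use the forest structure of the $Z_{i,j}$ to write $Z_{L\leq 0}$ as a disjoint union of maximal $Z_{i,j}$ with $L(Z_{i,j})\leq 0$ and invoke additivity of $L$; for the second inequality both split the edges of $Z_{L\leq 0}^{\infty}$ into those already in $Z_{L\leq 0}$ (where $f^2_e$ can only increase) and new edges (where $f^2_e\geq 2$ by definition of $K_A$). Your write-up is in fact slightly more explicit than the paper's in justifying the disjointness of the maximal components and the monotonicity of $f^2_e$ on old edges.
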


\begin{proof}
The $X^\pi_{i,j}$ have a natural tree structure generated by containment (that is for every $(i,j),(i',j')\in Q^\pi$ there is $X^\pi_{i,j}\cap X^\pi_{i',j'}\in\{\emptyset,X^\pi_{i,j},X^\pi_{i',j'}\}$). 
Since $L$ is additive on disjoint unions we get $L(X^\pi_{L\leq 0}) \leq 0.$

By the definition of $X^{\pi\infty}_{L\leq 0}$ every edge
 $$e \in F_1(X_{L\leq 0}^{\pi\infty})\setminus F_1(X^\pi_{L\leq 0})$$
has $f^2_e(X_{L\leq 0}^{\pi\infty})\geq 2$.

Thus
\begin{eqnarray*}
 L(X_{L\leq 0}^{\pi\infty})
 &=&   \sum_{e\in F_1(X_{L\leq 0}^{\pi\infty})}(2-f^2_e(X_{L\leq 0}^{\pi\infty}))\\
 &\leq& \sum_{e\in F_1(X^\pi_{L \leq 0})}(2-f^2_e(X_{L\leq 0}^{\pi\infty}))\\
 &&+\sum_{e\in F_1(X_{L\leq 0}^{\pi\infty}) \setminus F_1 (X^\pi_{L\leq 0})}(2-f^2_e(X_{L\leq 0}^{\pi\infty}))\\
&\leq& L(X^\pi_{L \leq 0})\\
  &\leq& 0.
  \end{eqnarray*}
\end{proof}

\begin{lemma} \label{berkeleyone}
 For every $\epsilon>{1\over 2}$ there is some $\beta$ so that every minimal filling 
$(C_r\xrightarrow{b}D\xrightarrow{\pi}X)$ with
\begin{itemize}
   \item $e(X)\geq\epsilon$ and
   \item $\chi(Z) \leq 1$  for every connected $Z\subset X$,
\end{itemize}
satisfies
 \be \label{pisa}
 f_2(D) < \beta (r + f_2(D \setminus D_{L \leq 0})).
 \ee
 \end{lemma}

\begin{proof}

By Definition \ref{jfk}
we have the complexes $X^\pi_{L\leq 0} \subset X^{\pi\infty}_{L\leq 0}\subset X$
and $D_{L\leq 0}^{\pi\infty} \subset D$.

First show that $(D-D_{L\leq 0}^\infty)\cup\hbox{Im}(b)$ is connected.  

If not, take $C$ to be a connected component different from the one containing $\hbox{Im}(b)$ 
and check that the pure two dimensional complex $\overline{\pi(C)}\cup X^{\pi\infty}_{L\leq 0}$ 
satisfies the conditions in Definition \ref{cambria}, 
so that $\pi(C)\subseteq K_{X^{\pi\infty}_{L\leq 0}}X^{\pi\infty}_{L\leq 0}=X^{\pi\infty}_{L\leq 0}$, which will be a contradiction.  
The condition is checked by taking $e$ any edge in $C$ with $\pi(e)$ in $\pi(C)-X^{\pi\infty}_{L\leq 0}$ 
and $f_{\pi(e)}^2(\pi(C)\cup X^{\pi\infty}_{L\leq 0})=1$ and constructing a smaller filling.  
Since $C$ is a manifold, $f_e^2(C)=2$ and these two triangles have the same image under $\pi$ 
so they can be contracted to a length two path, contradicting the minimality of the filling.  

This implies that each of the connected components $D_j$ of $D_{L\leq 0}^{\pi\infty}$ is a closed disk.  
Denote by $(C_{r_j}\xrightarrow{b_j}D_j\xrightarrow{\pi|_{D_j}}X)$ a filling with $L(\partial D_j)=r_j$.  
This factors through a filling $(C_{r_j}\xrightarrow{b_j}D_j\xrightarrow{\pi_j}X^{\pi\infty}_{L\leq 0})$ 
and write $\gamma_j=\pi_j b_j$.  

By Lemma \ref{victrola} we have that $L(X^{\pi\infty}_{L\leq 0})\leq 0$ and
by assumption $\chi(X^{\pi\infty}_{L\leq 0}) \leq 1$ so that by Lemma \ref{below} 
there is $\beta\geq 1$ depending only on $\epsilon$ with
$$\beta L(\gamma_j) > A(\gamma_j). $$

By the definition of the $D_j$, $b_j$ is injective and every edge of $\hbox{Im}(b_j)$ is in either $\hbox{Im}(b)$ 
or the boundary of a triangle in $D\setminus D_{L\leq 0}^{\pi\infty}.$
Thus
$$\sum_{j}r_j
  \leq r+3f_2(D \setminus D^{\pi\infty}_{L \leq 0})$$
  or
\be \label{train}
r \geq  \sum r_j -3f_2(D\setminus D^{\pi\infty}_{L\leq 0}).
\ee
By the definition of $X_{L\leq 0}^{\pi\infty}$ we have $X^\pi_{L\leq 0} \subset X_{L\leq 0}^{\pi\infty}$ so
$$f_2(D\setminus D^{\pi\infty}_{L\leq 0})
 \leq f_2(D \setminus D_{L \leq 0}). $$
Thus multiplying (\ref{train}) by $\beta$ we get
 \begin{eqnarray*}
 \beta r &\geq& \sum \beta r_j-3\beta f_2(D \setminus D^\infty_{L \leq 0})\\
 &>&
  \sum A(\gamma_j)-3\beta f_2(D \setminus D^\infty_{L \leq 0}) \\
 &=& f_2(D)-f_2(D \setminus D^\infty_{L \leq 0})-3\beta f_2(D \setminus D^\infty_{L \leq 0}) \\
 &\geq& f_2(D) -4\beta f_2(D \setminus D^\infty_{L \leq 0})\\
 &\geq& f_2(D) -4\beta f_2(D \setminus D_{L \leq 0})
\end{eqnarray*}
which proves the lemma.
\end{proof}

\begin{lemma} \label{berkeleytwo}
If $\epsilon>{1\over 2}$ and $(C_r\xrightarrow{b}D\xrightarrow{\pi}X)$ is a minimal filling with 
$e(X)\geq \epsilon$ then 
\begin{equation}\label{carsale}
f_2(D\setminus D_{L\leq 0}) \leq \frac{3}{2\epsilon-1}r .
\end{equation}
\end{lemma}

\begin{proof}
We use Lemma \ref{popped} with $w=0$ to get
that for each $(i,j) \in \bar Q^\pi_{L>0}$
 \be L(X^\pi_{i,j}) + 2\chi(X^\pi_{i,j}) \geq f_2(X^\pi_{i,j})(2e(X^\pi_{i,j})-1). \label{siege} \ee
 By assumption we have that
$\chi(X^\pi_{i,j})\leq 1$, $L(X^\pi_{i,j}) \geq 1$ and $e(X^\pi_{i,j})\geq\epsilon$, so
\begin{align*}
3L(X^\pi_{i,j}) & \geq L(X^\pi_{i,j})+2\\
&\geq L(X^\pi_{i,j})+2\chi(X^\pi_{i,j})\\ 
&\geq(2\epsilon-1) f_2(X^\pi_{i,j}).
\end{align*}
Thus by Lemma \ref{boundary},
\begin{align*}
 3r & \geq \sum_{Q^\pi_{L> 0 }} 3L(X^\pi_{i,j})\\
 & \geq \sum_{Q^\pi_{L> 0 }} (2\epsilon-1) f_2(X^\pi_{i,j})\\
 & = (2\epsilon-1) f_2(D\setminus D_{L\leq 0}).
\end{align*}
\end{proof}

\begin{pfoflem}{\ref{huji}} \label{proof_huji} \label{malia}
By Lemma \ref{topology} we can find a subcomplex $Z \subset X$ such that 
every connected $Z' \subset Z$ has $\chi(Z') \leq 1$, 
$\gamma_Z:C\rightarrow Z$ composed with the inclusion is $\gamma$ and $\gamma_Z$ 
is contractible (in $Z$).  Let $(C\xrightarrow{b}D\xrightarrow{\pi}Z)$
be a minimal filling of $\gamma_Z$.  By the definition of $e$ we have
$$e(Z) \geq e(X) \geq \epsilon$$
so the hypotheses of Lemmas \ref{berkeleyone}  and \ref{berkeleytwo} apply to $\gamma_Z$.

The area of $\gamma$ in $X$ is at most the area of
$\gamma$ in $Z$ so $$A(\gamma) \leq A(\gamma_Z)=f_2(D).$$
Combining Lemmas \ref{berkeleyone} and  \ref{berkeleytwo} we get
\begin{align*}
A(\gamma) & \leq  f_2(D)\\
& \leq \beta r + \beta f_2(D \setminus D_{L \leq 0})\\
& \leq \beta r + \beta\frac{3}{2\epsilon-1}r \\
& <\frac{5\beta}{ 2\epsilon-1}r.
\end{align*}
\end{pfoflem}

\begin{pfoflem}{\ref{hiji2}}  \label{proof_hiji2}
If $\id$ is contractible in $X$ then by Lemma \ref{topology}
there is $Z \subset X$ such that every connected $Z' \subset Z$ has 
$\chi(Z') \leq 1$ and $\id$ is
contractible in $Z$.    Let $(C\xrightarrow{b}D\xrightarrow{\pi}Z)$ be a minimal filling
of $\id$ in $Z$. By Lemma \ref{embedding}, $\pi$ is a $1$-immersion so that no images of interior edges contribute positively to $L$ and
$$L(\hbox{Im}(\pi)) \leq L(D) \leq 3.$$
By Lemma \ref{popped}  we have that
\begin{eqnarray*}
f_2(\hbox{Im}(\pi))
 &\leq& \frac{2\chi(\hbox{Im}(\pi))-2\cdot3 +L(\hbox{Im}(\pi))}{2e_3(\hbox{Im}(\pi))-1}\\
 &\leq& \frac{2\cdot 1-2\cdot3 +3}{2e_3(X)-1}\\
 &<& 0.
\end{eqnarray*}
This is a contradiction and $\id$ is not contractible in $X$.
\end{pfoflem}

{\bf \large Acknowledgements\\ }

The authors thank Nati Linial, Roy Meshulam, and Omer Angel for
helpful conversations. We also acknowledge Rick Kenyon for
mentioning the work of Ollivier and the connection to Gromov's theory of random
groups.  We would like to especially thank an anonymous referee, Gundart Anna and Uli Wagner for carefully
reading earlier drafts of this paper and making many helpful remarks.

This work supported in part by NSA grant \#
H98230-05-1-0053, NSF grant \# DMS-0501102, an AMS Centennial Fellowship, and the University of
Washington's NSF-VIGRE grant \# DMS-0354131. We would also like to thank MSRI and
the Institute for Advanced Studies at the Hebrew University of
Jerusalem where some of the research was done.

\clearpage

\section*{Appendix 1:  Connectivity of $G(n,p)$ away from the threshold} \label{appendix1}
\begin{pfoflem}{\ref{TY}}
We show  that if $ p = \frac{3 \log{n} + c }{n}$ then the probability that the
graphs $\mbox{lk}_{Y}(a) \cap \mbox{lk}_{Y}(b)$ are connected for all pairs $\{a,b \}$ is bounded below by $1-Ce^{-c}$ with
$C$ independent of $c$.  Since the probability that $TY(a,b)$ is
connected for all $a$ and $b$ in $[n]$ is increasing in $p$ this is
enough to prove the condition occurs a.a.s.\  These methods are typical in random graph theory; see for example \cite{Bollo} Theorem 7.3.

If $G$ is a graph with $n-2$ vertices with no connected components
with $k$ vertices then $G$ is connected.

For $k$ between $1$ and $n$ let $E_k$ be the expected number of
connected components in $TY(a,b)$ with $k$ vertices. We will show
that
$$\sum_{a,b}\sum_{k=1}^{\lfloor n/2 \rfloor}E_k
    = {n \choose 2} \sum_{k=1}^{\lfloor n/2 \rfloor} E_k \leq Ce^{-c}$$
for some $C<\infty.$  Thus by the union bound and the remark above,
all $TY(a,b)$ are connected a.a.s.

For any set of vertices $\{ a, b, u, v \}$, $u$ is adjacent to $v$ in $TY(a,b)$
if and only if $\{ a,u,v \}$ and $\{ b,u,v \}$ are both faces of $Y$, which
happens with probability $p^2$ by independence. So the probability
that $x$ is an isolated vertex in $TY(a,b)$ is $(1-p^2)^{n-3}$, and
we have that
\begin{eqnarray*}
E_1 & = & (n-2)(1-p^2)^{n-3}\\
& = & (n-2)\left(1-\frac{3\log{n}+c}{n}\right)^{n-3} \\
&<& C(n-2)e^{-3\log{n}+c}\\
&<& Ce^{-c}/n^2\\
&<& Ce^{-c}
\end{eqnarray*}
for some constant $C<\infty$.

The expected number of connected components in $TY(a,b)$ of order
$2$, is
\begin{eqnarray}
E_2
 &<&{n-2 \choose 2} p^2 (1-p^2)^{2(n-4)}\\
 &<& n^2 \left(\frac{3\log{n}+c}{n}\right)^2\left(1-\frac{3\log{n}+c}{n}\right)^{2(n-4)}\nonumber \\
 &< & C n^2    e^{-2(3\log{n}+c)} \nonumber \\
    &< & Ce^{-2c}/n^4 \nonumber\\
&<& Ce^{-c}.\label{hines} \nonumber
\end{eqnarray}

Similarly, since the number of spanning trees on a fixed set of
$k$ vertices is $k^{k-2}$,

\begin{eqnarray*}
\sum_{a,b}\sum_{k=3}^{\lfloor n/2 \rfloor}E_k
 &\leq &{n \choose 2} \sum_{k=3}^{\lfloor n/2 \rfloor}E_k\\
 &\leq &{n \choose 2} \sum_{k=3}^{\lfloor n/2 \rfloor} 
        {n-2 \choose k} k^{k-2} p^{2(k-1)}(1-p^2)^{k(n-k-2)} \\
 &\leq & \left(\frac{n^2}{2}\right) \sum_{k=3}^{\lfloor n/2 \rfloor} \frac{n^k}{k!}
         k^{k-2} p^{2(k-1)} e^{-p^2k(n-k-2)} \\
 &\leq & \left(\frac{n^2}{2}\right) \sum_{k=3}^{\lfloor n/2 \rfloor}
        k^{-5/2} e^k
        n^k p^{2(k-1)} e^{-p^2k(n-k-2)} \\
 &\leq &  \left(\frac{n^3}{2}\right) \sum_{k=3}^{\lfloor n/2 \rfloor}
        k^{-5/2} \exp \bigg[k+(k-1)\log{3} +
        (k-1)\log{\log{n}}\\
 &&     \hspace{2.2in}   -3 k(n-k-2)\log{n}/ n\bigg] \\
 &\leq &  \left(\frac{n^3}{2} \right)\sum_{k=3}^{\lfloor n/2 \rfloor} k^{-5/2}
        \exp [-7k\log{n}/5] \\
 &\le &  n^{-6/5}\\
&<& Ce^{-c}.
\end{eqnarray*}

For the second condition note that for fixed $a,b,d \in [n]$ we have that
$$\prob(abd \not \in F_2(Y))=1-p.$$
For each $d$ this is independent.  So for a fixed $a,b \in [n]$
$$\prob(\not \exists d:\ abd \in F_2(Y))=(1-p)^{n-2}=O(1-\frac{1}{n^{1/2}})=O(e^{-n^{1/2}}).$$
Then the union bound shows that the second condition is satisfied a.a.s.
\end{pfoflem}

\newpage

\section*{Appendix 2:  Local-to-global} \label{appendix2}

In this appendix we prove a local-to-global theorem for linear isoperimetric inequalities.  The statement and proof are similar in spirit to results already appearing for groups \cite{gromov1,Pap}, but we need the result for simplicial complexes so we include a proof here for the sake of completeness.

Throughout the section we fix $\epsilon \in (0,.25)$ and 
work with simplicial complexes scaled so that edges have length $\epsilon$ and triangles have area $\epsilon^2$. 
\begin{theorem}\label{pap}
If $X$ is a simplicial complex with edge lengths $\epsilon$ and triangle areas
$\epsilon^2$ and there is an $n\geq 1$ such that every loop $\gamma$ with $1\leq A\gamma\leq 44$
has $A\gamma<({L\gamma\over 44})^n$ then every contractible loop $\gamma$
with $1\leq A\gamma$ has $A\gamma< L\gamma$.
\end{theorem}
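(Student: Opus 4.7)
The natural approach is strong induction on $A\gamma$, which takes values in a nonnegative-integer multiple of $\epsilon^2$ after rescaling.  The base case $1\leq A\gamma\leq 44$ is immediate from the hypothesis: it gives $L\gamma>44\,A\gamma^{1/n}\geq 44\geq A\gamma$.  Crucially, this base case comes with a large reservoir of slack, $L\gamma-A\gamma\geq 44\,A\gamma^{1/n}-A\gamma$, which will be decisive in the inductive step.

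For the inductive step, assume $A\gamma>44$ and the conclusion holds for all contractible loops of strictly smaller area.  Fix a minimal filling $(C_r\xrightarrow{b}D\xrightarrow{\pi}X)$ of $\gamma$ as in Definition~\ref{area}.  The plan is to produce a short embedded arc $\sigma\subseteq D$ with both endpoints on $\partial D$ that splits $D$ into two subdisks $D_1,D_2$; projecting by $\pi$ yields two contractible loops $\gamma_1,\gamma_2$ with $A\gamma_1+A\gamma_2=A\gamma$ and $L\gamma_1+L\gamma_2=L\gamma+2L(\sigma)$.  Applying the inductive hypothesis to each $\gamma_i$ (after verifying $1\leq A\gamma_i<A\gamma$) would yield $A\gamma<L\gamma+2L(\sigma)$, so the problem reduces to controlling $L(\sigma)$ and harvesting the slack accumulated in the smaller loops to absorb it.

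To construct $\sigma$ I would pick a basepoint $v\in\partial D$ and study the filtration $B(v,0)\subseteq B(v,\epsilon)\subseteq B(v,2\epsilon)\subseteq\cdots$ of $D$ by combinatorial balls, together with the functions $a(k):=f_2(B(v,k\epsilon))$ and $\lambda(k):=$ length of the interior boundary $\partial B(v,k\epsilon)\setminus\partial D$.  A combinatorial co-area/averaging argument — each $2$-face of $D$ lies in a unique radial annulus, and these annuli are separated by the inner boundaries — would produce a radius $k_0$ at which $a(k_0)$ first crosses the midpoint $A\gamma/2$ while $\lambda(k_0)$ is short, with shortness controlled by the local polynomial hypothesis (a short loop bounds very little area, so passing the midpoint with a short inner boundary is forbidden unless $k_0$ is large, which is in turn incompatible with minimality of the filling).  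I would take $\sigma$ to be a suitable component of this inner boundary, using minimality of the filling to ensure that $\sigma$ is embedded and that both resulting subloops are nondegenerate.

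The main obstacle is the calibration of this chord construction.  One must simultaneously guarantee that (i) each subloop has area in the admissible range $[1,A\gamma)$ so the induction applies, (ii) $L(\sigma)$ is strictly less than $(L\gamma-A\gamma)/2$ so the additive error $2L(\sigma)$ is swallowed when the two inductive estimates are summed, and (iii) no ball-radius falls into a gap between the local polynomial regime and the inductive regime.  The constant $44$ is engineered precisely to make this balance close: a chord of length at most roughly $22$ combined with the slack afforded by the polynomial hypothesis $A\gamma'<(L\gamma'/44)^n$ at small scales provides exactly the room needed to conclude $A\gamma<L\gamma$.  Verifying these estimates rigorously — in particular, handling the jumps in $a(k)$ as $k$ increments and ruling out pathological fillings where $\sigma$ fails to be an embedded arc — is where the real work of the proof lies.
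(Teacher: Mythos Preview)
Your chord-splitting plan is a reasonable heuristic and is in the spirit of some local-to-global arguments, but as written it has a genuine gap. Summing the two inductive estimates $A\gamma_i<L\gamma_i$ yields only $A\gamma<L\gamma+2L(\sigma)$, and your condition~(ii)---that $L(\sigma)<(L\gamma-A\gamma)/2$---presupposes precisely the inequality $A\gamma<L\gamma$ you are trying to establish. The ``slack'' you appeal to from the base regime is available only when a subloop has $A\gamma_i\le 44$; but your construction aims for pieces of area near $A\gamma/2$, which for $A\gamma>88$ puts both pieces outside the base case, and then the induction gives no slack at all. A co-area/ball-growth argument of this flavour can be made to work, but it requires choosing the radius so as to isolate a piece of \emph{controlled small} area (not a bisection), together with a quantitative bound on the length this costs---and that is exactly the content you have deferred to ``where the real work lies.'' As it stands, no step in the proposal produces the needed multiplicative gain.

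The paper's argument is structurally different and avoids splitting altogether. One fixes a minimal-area counterexample with a minimal filling and first shows that every \emph{shortcut}---a pair of boundary points whose distance in $D$ is at least $1$ shorter than along $\partial D$---must have length at least $44$ (else cutting along it would produce a smaller counterexample or violate the local hypothesis). This ``no short shortcut'' thickness is then exploited geometrically: one places markers at spacing roughly $\mu=13/2$ along $\partial D$ and around alternate markers builds pairwise disjoint rectangles in $D$, each of type about $(\tfrac{\mu-1}{2},\mu-1)$ and hence area at least $(\mu-1)^2$. Counting these rectangles yields a strict multiplicative inequality $A\gamma>c\,L\gamma$ with $c>1$. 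Finally one removes a single boundary $2$-face, which lowers the area by $\epsilon^2$ and raises the length by at most $\epsilon$, producing a strictly smaller counterexample---a contradiction. The essential point your outline misses is that the paper never relies on summing two inductive bounds; it manufactures the multiplicative gap directly from the thickness of a minimal counterexample.
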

Theorem \ref{gromov} follows easily from this result.
The key concept that we will use in the proof of Theorem \ref{pap} is that of a {\bf shortcut}.

\begin{definition}
Let  $([k]\xrightarrow{x}C_r\xrightarrow{b}D\xrightarrow{\pi}X)$ is a $k$-marked 
filling if $(C_r\xrightarrow{b}D\xrightarrow{\pi}X)$ is a filling and 
$[k]\xrightarrow{x}C$ is a cyclically order preserving map from $[k]$
to $F_0(C_r)=[r]$ and $x(i)$ is denoted $x_i$. (This simply means that there is $a\in[k]$ so that $x_{i}<x_{i+1}$ if $i\not= a$ and 
$x_k<x_1$ if $k\not= a$.)
For each marked filling we define the covering by cyclically order preserving paths 
$J^x_i:I_{x_{i+1}-x_i}\rightarrow C_r$ from $x_i$ to $x_{i+1}$ and 
$J^x_k:I_{x_1-x_k+r}\rightarrow C_r$ from $x_k$ to $x_1$.  Define paths 
$b^x_i=b(J^x_i):I_{x_{i+1}-x_i}\rightarrow D$ and 
$\gamma^x_i=\gamma^R_i=\pi(b^x_i):I_{x_{i+1}-x_i} \rightarrow X$.
\end{definition}

\begin{definition}
A {\bf shortcut} is a $2$-marked filling
$$([2]\xrightarrow{y}C\xrightarrow{b}D\xrightarrow{\pi}X)$$ with
$$d\big(y_1,y_2\big)-d\big(by_1,by_2\big)\geq 1.$$
Fix a (geodesic) path $B^y:I_{d(by_1,by_2)}\rightarrow D$ from $by_1$ to $by_2$ 
and cycles $B^y_i=b^y_i\cdot \overline{B^y}:C\rightarrow D$ and 
$\Gamma^y_i=\pi B^y_i:C\rightarrow X$.  

We say that a shortcut is of {\bf  type $\mu$} if
$$ d\big(y_1,y_2\big)\geq \mu \ \ \ {\text and }\ \ \ d\big(by_1,by_2\big) \leq \mu-1.$$
\end{definition}

Note that every shortcut has at least one type.  

We prove Theorem \ref{pap} by induction on the minimal area of a filling.

Throughout the rest of this section we let $\alpha=44$  and $\mu={13\over 2}$.  The first step is to show that all shortcuts in a minimal counterexample are long.

\begin{lemma}
If $$([2]\xrightarrow{y}C\xrightarrow{b}D\xrightarrow{\pi}X)$$ be a shortcut in a filling which is a minimal area counterexample to Theorem \ref{pap} (that is $A(D)$ is minimal among all fillings with $1\leq A(D)=A(\pi b)\geq L(\pi b)$) then $A(\Gamma^y_1)\geq 1$ and $L(\Gamma^y_1)\geq\alpha$.
\end{lemma}
\begin{proof}
Write $\gamma=\pi b$.  
First note that $\alpha< A(\gamma)$.  Otherwise, as the shortcut is a counterexample to Theorem \ref{pap} and $A(\gamma) < \alpha$ there is 
$$L\gamma\leq A\gamma<\left({L\gamma\over\alpha}\right)^n$$

So $$\alpha^{n}< (L\gamma)^{n-1}\leq (A\gamma)^{n-1}\leq \alpha^{n-1}$$ which is a contradiction.

Note that $1\leq A\gamma_{1}$.  Otherwise, 
$$A\gamma_{2}=A\gamma-A\gamma_{1}>\alpha-1\geq 1.$$ Since $\gamma$ is a minimal area counterexample we must have that $\gamma_{i}$ is not a counterexample so $A\gamma _{i}<L\gamma_{i}$.
Hence by the definitions of the $\gamma_{i}$ and of a shortcut
$$1>A\gamma_{1}=A\gamma-A\gamma_{2}>L\gamma-L\gamma_{2}=LJ_1-LB=d(x_1,x_2)-d(bx_1,bx_2)\geq 1,$$ a contradiction.

Finally, if $L\gamma_{1}\leq \alpha$ then 
$$1\leq A\gamma_{1} < L\gamma_{1}\leq \alpha$$ so that by the hypothesis of Theorem \ref{pap} there is 
$1\leq A\gamma_{1}<({L\gamma_{1}\over\alpha})^n$ and $\alpha < L\gamma_{1}$, a contradiction.
\end{proof}

\begin{definition}
A {\bf rectangle} of type $(u_1,u_2)$ is a $4$-marked filling $([4]\xrightarrow{x}C\xrightarrow{b}D\xrightarrow{\pi}X)$ with
$$d_D(\hbox{Im}(b^x_i),\hbox{Im}(b^x_{i+2}))\geq u(i)$$ for both $i\in\{1,2\}$.
\end{definition}


\begin{definition}
The {\bf ball} $B(r,A)$ of radius $r$ about $A\subseteq X$ (a metric space) is
all points with distance to $A$ at most $r$.
\end{definition}

\begin{definition}
If $R=([4]\xrightarrow{x}C\xrightarrow{b}D\xrightarrow{\pi}X)$ is a rectangle with type $u_j>r$ then
the {\bf $r$-neighborhood} of the $i=j$ (or $i=j+2$) edge of $R$ is the subrectangle
$$N_{r,i}(R)=\left([4] \xrightarrow{x'}C'\xrightarrow{b'}D'\xrightarrow{\pi|_{D'}}X\right).$$

Take $y_1, y_2\in b^{-1}B(r,\hbox{Im}(b^x_i))$ maximizing the length $t$ of the minimal path
$a:I_t\rightarrow C$ from $y_1$ to $y_2$ containing $\hbox{Im}(J^x_i)$ but not intersecting 
$\hbox{Im}(J^x_{i\pm 2})$.  
Take $x'_i=x_i$, $x'_{i+1}=x_{i+1}$, $x'_{i\pm 2}=y_1$
and $x'_{i+1\pm 2}=y_2$.
Take $z:I_s\rightarrow D$ to be the $1$-immersed path from $by_2$ to $by_1$ with interior avoiding 
$\hbox{Im}(b)$ along the boundary (interior to $D$) of $B(r,\hbox{Im}(b^x_i))$.  
Take $b'=a\cdot \overline{z}:C'=C_{t+s}\rightarrow D'\subseteq D$.  
\end{definition}

\begin{lemma} \label{disjoint}
If $R$ is a rectangle of type $(r+s,v)$ then $N_{r,1}(R)$ is of type $(r,v)$ and has
interior disjoint from $N_{s,3}(R)$.
\end{lemma}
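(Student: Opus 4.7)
The plan is to verify the two assertions separately, in both cases leveraging the fact that the $r$-neighborhood $N_{r,1}(R)$ is, by construction, contained in the closed metric ball $\overline{B(r,\mathrm{Im}(b_1))}$, with its interior contained in the open ball $B(r,\mathrm{Im}(b_1))^{\circ}$.

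First I would check that $N_{r,1}(R)$ has type $(r,v)$, i.e.\ that $d(b'_1,b'_3)\geq r$ and $d(b'_2,b'_4)\geq v$. The new side $b'_1$ equals $b_1$, because the construction keeps $x'_1=x_1$ and $x'_2=x_2$. The new opposite side $b'_3$ is the path $z$, which by its definition traces the topological boundary of $B(r,\mathrm{Im}(b_1))$ inside $D$ from $by_2$ to $by_1$; every point of $z$ therefore lies at distance exactly $r$ from $\mathrm{Im}(b_1)$. This already forces $d(b'_1,b'_3)\geq r$. For the transverse direction, $b'_2$ and $b'_4$ are the subpaths of $b_2$ and $b_4$ obtained by restricting to $b^{-1}B(r,\mathrm{Im}(b_1))$; in particular they are subpaths, so
\[
d(b'_2,b'_4)\geq d(b_2,b_4)\geq v,
\]
the last inequality being the hypothesis that $R$ has type $(r+s,v)$.

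Next I would show that the interiors of $N_{r,1}(R)$ and $N_{s,3}(R)$ are disjoint. Here I would argue by contradiction, using the triangle inequality in $D$. Suppose $p$ lies in both interiors. By the construction of the $r$-neighborhood, the interior of $N_{r,1}(R)$ lies in the open ball $B(r,\mathrm{Im}(b_1))^{\circ}$ (the path $z$ forming the new side 3 is part of the boundary of the $r$-ball and is therefore not in the interior of $N_{r,1}(R)$); thus $d(p,\mathrm{Im}(b_1))<r$. Symmetrically, $d(p,\mathrm{Im}(b_3))<s$. Combining these,
\[
d(b_1,b_3)\;\leq\;d(p,\mathrm{Im}(b_1))+d(p,\mathrm{Im}(b_3))\;<\;r+s,
\]
contradicting the assumption that $R$ is of type $(r+s,v)$ in the first coordinate. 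Hence no such $p$ exists.

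The only subtle point, and the one I expect to require a little care, is to confirm that the interior of $N_{r,1}(R)$ really is strictly inside the open ball $B(r,\mathrm{Im}(b_1))^{\circ}$ (rather than just contained in the closed ball). This is a direct consequence of how the authors defined $z$ as running along the boundary of $B(r,\mathrm{Im}(b_1))$: interior points of $N_{r,1}(R)$ are points of $D$ strictly separated from $\mathrm{Im}(b_3)$ by $z$, hence strictly nearer to $\mathrm{Im}(b_1)$ than $r$. Once that point is settled, both claims follow from the two short arguments above.
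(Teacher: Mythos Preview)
Your proposal is correct and, for the disjointness assertion, follows exactly the paper's one-line argument: a point in both interiors would yield a path from $\mathrm{Im}(b_1)$ to $\mathrm{Im}(b_3)$ of length less than $r+s$, contradicting the type hypothesis. The paper's proof in fact addresses only the disjointness; your additional verification that $N_{r,1}(R)$ has type $(r,v)$ (via $b'_3=z$ lying on $\partial B(r,\mathrm{Im}(b_1))$ and $b'_2,b'_4$ being subpaths of $b_2,b_4$) fills in a detail the authors left implicit.
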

\begin{proof}

If there is a point in $N_{r,1}(R)^\circ \cap N_{s,3}(R)^\circ$ then there is a path from $\hbox{Im}(b_{1})$ to $\hbox{Im}(b_{3})$ of length 
less than $r+s$.  
\end{proof}

\begin{lemma} \label{rectanglearea}
If $R$ is a rectangle of type $u$ then $A(R)\geq 2u(1)u(2)$.
\end{lemma}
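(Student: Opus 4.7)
My plan is to proceed by strong induction on $u(1) + u(2)$, peeling off a width-$\epsilon$ strip from $R$ at each step using Lemma \ref{disjoint}. The cases where either $u(1) = 0$ or $u(2) = 0$ are trivial since $2u(1)u(2) = 0$. The essential base case is $u(1) = u(2) = \epsilon$: I claim $A(R) \geq 2\epsilon^2$, equivalently $f_2(D) \geq 2$. Suppose instead that $f_2(D) = 1$, so $D$ is a single $2$-simplex whose boundary is a $3$-cycle of vertices. Any cyclically order-preserving $4$-marking $x \colon [4] \to F_0(C)$ into a three-vertex cycle must have two consecutive marks coincide, so exactly one side $b_i$ is a single vertex. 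A direct case check over the four possibilities (mark $j$ coincides with mark $j+1$, for $j = 1,2,3,4$ cyclically) shows that either $b_1$ and $b_3$ or $b_2$ and $b_4$ share a vertex of the triangle, so one of $d(b_1,b_3)$, $d(b_2,b_4)$ vanishes, contradicting the type $(\epsilon,\epsilon)$ hypothesis.

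For the inductive step, assume $u(1), u(2) \geq \epsilon$ with $u(1)+u(2) > 2\epsilon$. By symmetry (swapping indices and invoking the obvious analogue of Lemma \ref{disjoint} for the direction $i=2$, namely that $N_{r,2}(R)$ and $N_{s,4}(R)$ have disjoint interiors whenever their widths sum to $u(2)$), I may assume $u(1) \geq 2\epsilon$. Applying Lemma \ref{disjoint} with $r=\epsilon$ and $s=u(1)-\epsilon$ produces a sub-rectangle $N_{\epsilon,1}(R)$ of type $(\epsilon, u(2))$ and a sub-rectangle $N_{u(1)-\epsilon,3}(R)$ of type $(u(1)-\epsilon, u(2))$, with disjoint interiors inside $D$. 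Since no $2$-face belongs to both subcomplexes, the areas add:
\[
A(R) \geq A(N_{\epsilon,1}(R)) + A(N_{u(1)-\epsilon,3}(R)).
\]
Both sub-rectangles have $u(1)+u(2)$ strictly smaller than the original, so by the inductive hypothesis $A(N_{\epsilon,1}(R)) \geq 2\epsilon \cdot u(2)$ and $A(N_{u(1)-\epsilon,3}(R)) \geq 2(u(1)-\epsilon)u(2)$. Adding gives the desired $A(R) \geq 2u(1)u(2)$.

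The principal obstacle I expect is the base case, which requires carefully verifying that every possible degenerate $4$-marking of the single-triangle filling is incompatible with at least one of the two distance conditions; the finite case check is straightforward but tedious. The inductive step is essentially formal once Lemma \ref{disjoint} is in hand — peeling off the width-$\epsilon$ strip strictly reduces $u(1)+u(2)$ while producing a sub-rectangle of matching type on the complement, and disjointness of interiors is exactly what is needed to make the areas additive.
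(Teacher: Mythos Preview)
Your proposal is correct and follows essentially the same approach as the paper: establish the base case that a type $(\epsilon,\epsilon)$ rectangle has at least two triangles, then induct by peeling off strips via Lemma \ref{disjoint}. The paper's proof is a two-line sketch (``Since any type $(\epsilon,\epsilon)$ rectangle has at least $2$ triangles, an easy induction using \ref{disjoint} \ldots''), and you have simply spelled out the details of that induction and supplied an argument for the base case that the paper omits.
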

\begin{proof}
Since any type $(\epsilon,\epsilon)$ rectangle has at least $2$ triangles,
an easy induction using \ref{disjoint} now
shows that any type $(u(1),u(2))$ rectangle has area at least $2u(1)u(2)$.
\end{proof}

\begin{lemma} \label{maicon}
Let $$F=(C\xrightarrow{b}D\xrightarrow{\pi}X)$$ be a minimal filling which is a minimal area 
counterexample to Theorem \ref{pap}.  Then there exists a filling
$$F'=(C'\xrightarrow{b'}D'\xrightarrow{\pi |_{D'}}X)$$ with 
$A\gamma \geq A\gamma' >\alpha$ and $A\gamma'>1.15L\gamma'$.
\end{lemma}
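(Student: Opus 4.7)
I would prove the lemma by showing that $F$ itself already satisfies $A\gamma > 1.15\,L\gamma$, so we may simply take $F' = F$. A preliminary observation disposes of the $A\gamma' > \alpha$ condition: any counterexample automatically has $A\gamma > \alpha$. Indeed, if $A\gamma \leq \alpha$, then the hypothesis of Theorem~\ref{pap} gives $A\gamma < (L\gamma/\alpha)^n$; since the counterexample property forces $L\gamma \leq A\gamma \leq \alpha$, we obtain $A\gamma < (L\gamma/\alpha)^n \leq 1 \leq A\gamma$, a contradiction.

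Now suppose for contradiction that $L\gamma \leq A\gamma \leq 1.15\,L\gamma$. My strategy is to produce a proper subdisk $D' \subsetneq D$ whose boundary cycle $\gamma' = \pi(\partial D')$ satisfies $f_2(D') > 1.15\,L(\partial D')$. Because $F$ is a minimal filling of $\gamma$, any proper subdisk $D' \subseteq D$ is itself a minimal filling of $\gamma'$: otherwise, splicing a strictly smaller filling of $\gamma'$ into $D \setminus D'$ would produce a strictly smaller filling of $\gamma$, contradicting the minimality of $F$. Hence $A\gamma' = f_2(D') < f_2(D) = A\gamma$, and the bound $A\gamma' > 1.15\,L\gamma' > L\gamma' \geq 1$ makes $\gamma'$ a counterexample of strictly smaller area than $A\gamma$, contradicting the minimum-area property of $F$ among counterexamples.

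To construct such $D'$ I exploit the rectangle and shortcut machinery. By the preceding lemma every shortcut in $F$ has length and area at least $\alpha = 44$, a strong thickness constraint on $D$. I would first attempt to fit an interior rectangle $R \subseteq D^{\circ}$ of type $(\mu,\mu)$ with $\mu = 13/2$: Lemma~\ref{rectanglearea} gives $f_2(R) \geq 2\mu^2 = 84.5$, while the perimeter is at most $4\mu = 26$, yielding ratio $\geq 3.25 > 1.15$. If no such interior rectangle embeds, then $D$ is thin — every interior point lies within distance $\mu$ of $\partial D$ — and I would apply the $N_{r,i}$ neighborhood construction preceding Lemma~\ref{disjoint} to iteratively peel off $\mu$-thick boundary strips from $D$, each contributing area proportional to its length by the rectangle bound. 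The shortcut lower bound prevents any strip from being too narrow, since a thin bridge would yield a short shortcut violating the preceding lemma.

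The main obstacle will be the thin case: quantitatively tracking how the perimeter of the remaining region decreases against the steady area accumulation from each peeled strip, and leveraging the hypothesis $A\gamma \leq 1.15\,L\gamma$ to identify a precise peeling depth at which the remaining core either contains a type-$(\mu,\mu)$ rectangle or has boundary short enough that its own ratio $f_2/L(\partial\cdot)$ exceeds $1.15$. Making the peeling argument robust against the possibility that peeled layers interact nontrivially (e.g.\ when opposite strips collide from either side) will require careful bookkeeping using Lemma~\ref{disjoint} to guarantee disjointness of the $r$-neighborhoods.
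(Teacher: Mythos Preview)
Your proposal has a genuine gap: the paper does \emph{not} simply take $F'=F$, and for good reason. The core of the argument is to place equally-spaced markers $r_1,\dots,r_{2t}$ (spacing $\approx\mu$) along the boundary of $F'$, build the half-neighborhoods $R_i=N_{(\mu-1)/2,\cdot}(\cdot)$ of alternate arcs, show by Lemma~\ref{disjoint} that the $R_i$ are pairwise disjoint rectangles of type $(\tfrac{\mu-1}{2},\mu-1)$, and sum their areas via Lemma~\ref{rectanglearea} to get $A\gamma'\geq t(\mu-1)^2$. For this disjointness and type bound to hold you need that $F'$ contains \emph{no type-$\mu$ shortcut}; the preceding lemma only tells you that shortcuts in $F$ have $L\gamma_1\geq\alpha$, which does not preclude type-$\mu$ shortcuts. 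The paper therefore splits into two cases: if $F$ already has no type-$\mu$ shortcut set $F'=F$; otherwise choose a type-$\mu$ shortcut with $LJ_1$ minimal and let $F'$ be the subfilling cut off by it, so that by minimality $F'$ has no type-$\mu$ shortcut on its $J_1$-side. Your plan never arranges this, so your rectangle bounds would fail whenever short shortcuts are present.

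There are also definitional problems with your sketch. A ``rectangle'' in this appendix is a $4$-marked filling---four points on the boundary cycle $C$---not a freestanding subregion of $D^\circ$, so your ``interior rectangle of type $(\mu,\mu)$'' is not a well-formed object here, and the type parameters record distances between opposite sides, not side lengths, so the perimeter bound $4\mu$ is unjustified. Your contradiction strategy (find a proper subdisk $D'$ with $f_2(D')>1.15\,L(\partial D')$) and the peeling idea are not how the paper proceeds; the paper gives a \emph{direct} lower bound on $A\gamma'$ by summing the $A(R_i)$, with $t\geq \frac{Lb'-3\mu-2\epsilon+1}{2(\mu+\epsilon)}$ controlling the number of disjoint pieces. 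You should replace the peeling heuristic with this marker-and-rectangle summation, and first pass to the shortcut-free $F'$ as above.
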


\begin{proof}
In the proof we follow the following steps.

\noindent {\bf Define a new marked filling.}

Write $\mu\leq m\epsilon<\mu+\epsilon$.

We do this in two cases.  First if $F$ has  no type $\mu$ shortcut then our filling is $F'=F$ (and for notational reasons take any 1-marking $[1]\xrightarrow{y}C=C'$).
Define a $2t$-marking $[2t]\xrightarrow{r}C$ with $r_i=1+m(i-1)$ and $t$ maximal with $LC\geq 2tm\epsilon$. 

If $F$ does have a type $\mu$ shortcut then choose one $[2]\xrightarrow{y} C$ such that $LJ^y_1$
is minimal and among those one with $AB^y_1$ minimal. Our filling $F'$ will have $b'=B^y_1$.

Now we define a $(2t)$-marking $[2t]\xrightarrow{r}C'$ of $F'$ by $r_i=x_1+mi\in\hbox{Im}(J^y_1)$, taking $t$ maximal so that $m\epsilon(2t+1)\leq LJ^y_1$. 

Note that all of the paths $b^r_i$ for $i=1,\dots,2t-1$ for this marked 
filling have image in $b^y_1$.
Note that in both cases 
$$t>\frac{Lb'-(\mu-1)-2(\mu+\epsilon)}{2(\mu+\epsilon)}.$$
{\bf Define the rectangles $R_i$.}

For any $i<j\in[t]$ consider the type $(u_1=\mu-1, u_2=\mu-1)$ rectangle $R_{i,j}$ marking $F'$ by $x_1=r_i$, $x_2=r_{i+1}$, $x_3=r_j$ and $x_4=r_{j+1}$.  The type follows from the minimality of the shortcut and Lemma \ref{disjoint}. Define $R_i=N_{\frac{\mu -1}{2},1}(R_{2i,2j})=N_{\frac{\mu -1}{2},3}(R_{2k,2i})=N_{\frac{\mu -1}{2},2}(R_{i-1,i+1})$.  By Lemma \ref{disjoint} $R_i$ has type $u_1=\frac{\mu -1}{2}$, $u_2=\mu$ and has interior disjoint from $R_j$ if $i\not= j$.  
 
By Lemma \ref{rectanglearea} and the last step we have that
$$A(R_i)\geq 2\left(\frac{\mu-1}{2}\right)(\mu-1)\geq\left(\mu-1\right)^2.$$
{\bf Finally we compute the area of $b$.}

Recalling that $\mu=6.5$, $Lb'>44$ and $\epsilon<.25$ we have that $(\mu-1)^2=30.25$ and
$$A b' \geq  \sum_{i=1}^t A(R_i)\geq t(\mu-1)^2
 \geq \left(\frac{Lb'-3\mu-2\epsilon+1}{2(\mu+\epsilon)}\right)(\mu-1)^2 \geq  \frac{30.25Lb'-575}{14} >  2.15Lb'-44>1.15Lb'.$$

\end{proof}

\begin{pfofthm}{\ref{pap}}
Assume not. Fix a counterexample $\gamma:C\rightarrow X$ with $1\leq L\gamma\leq A\gamma$ such that
$A\gamma$ is minimal as well as a minimal filling
$$F=(C\xrightarrow{b}D\xrightarrow{\pi}X)$$ of $\gamma$.

By Lemma \ref{maicon} we have that $\alpha<A\gamma>1.15 L\gamma$.  We construct a smaller counterexample.  Simply alter $D$ by removing one two face touching the boundary of $D$
and make the corresponding changes to the rest of the filling.  This increases the length of the curve by 
at most $\epsilon<.25$ and decreases its area by at most $\epsilon^2<.0625$.  It is easy to check that
this is still a counterexample.  As $\gamma$ was minimal this is a contradiction.
\end{pfofthm}

\begin{pfofthm}{\ref{gromov}}
Rescale $X$ so that edges have length ${1\over \rho}$ and triangles have area
$({1\over \rho})^2$ and apply Theorem \ref{pap} with $n=1$ and $\epsilon={1\over \rho}$.
\end{pfofthm}

Theorem \ref{pap} is closely related to the gap between quadratic and linear growth, as discussed for instance in \cite{Bowditch}.  In fact this gap follows from Theorem \ref{pap} with $n=2$.  

\bibliography{bhk}{}
\bibliographystyle{plain}

\end{document}